\chardef\forshowkeys=0   \chardef\refcheck=0   \chardef\showllabel=0   \chardef\sketches=0   \chardef\showcolors=0 \ifnum\forshowkeys=1      \usepackage[notref,notcite,color]{showkeys} \fi \ifnum\showllabel=1   \def\llabel#1{\marginnote{\color{colorcccc}\rm\small(#1)}[-0.0cm]\notag}   \def\llabel{\label} \else  \def\llabel#1{\notag} \fi  \ifnum\refcheck=1   \usepackage{refcheck} \fi \setcounter{MaxMatrixCols}{10}  \newtheorem{theorem}{Theorem}[section] \newtheorem{Theorem}{Theorem}[section]     \newtheorem{Lemma}[theorem]{Lemma}  \theoremstyle{definition}  \newtheorem{Remark}[theorem]{Remark}  \def\DD{\mathcal{D}} \def\MM{\tilde M}        \def\uk{u^{(k)}} \def\ukm{u^{(k-1)}}   \def\umo{u^{(-1)}} \def\vk{v^{(k)}} \def\vj{v^{(j)}}    \def\ujm{u^{(j-1)}}  \def\vj{v^{(j)}}   \def\Vj{V^{(j)}} \def\Vjm{V^{(j-1)}} \def\Vjp{V^{(j+1)}} \def\vz{v^{(0)}} \def\zj{z^{(j)}} \def\zjm{z^{(j-1)}}  \def\tae{\indeq\text{a.e.}~}                       \def\LLL#1#2{L_{\omega}^{#1}L_{x}^{#2}}    \def\LLLL#1#2#3{L_{\omega}^{#1}L_{t}^{#2}L_{x}^{#3}}    \def\LLLot{\LLL{1}{3}}    \def\LLLit{\LLL{\infty}{3}}    \def\LLLis{\LLL{\infty}{6}}        \def\LLLii{\LLL{\infty}{\infty}}    \def\Pas{\indeq\mathbb{P}\text{-a.s.}}         
       \def\PP{{\mathbb P}}        \def\dd{d}    \def\ueps{u^{\epsilon}}                                                                                                                               \def\uu{{{u}}}            \def\startnewsection#1#2{\section{#1}\label{#2}\setcounter{equation}{0}}       \def\NNp{{\mathbb N}}    \def\NNz{{\mathbb N}_0}         \def\TT{{\mathbb T}}    \def\WW{{\mathbb W}}    \def\EE{{\mathbb E}}    \def\comma{ {\rm ,\qquad{}} }                \def\commaone{ {\rm ,\quad{}} }             \def\fractext#1#2{{#1}/{#2}}                                                   \def\indeq{\qquad{}}                                        \def\TT{\mathbb T}                 \def\tilde{\widetilde}     \def\PP{\mathbb{P}}   
                 \def\div{\mathop{\rm div}\nolimits}      \def\indeq{\quad{}}                                     \ifnum\showcolors=1   \def\cole{\color{coloroftheorems}}   \definecolor{colorcccc}{rgb}{0.7,0.7,0.7}                  \def\colr{\color{red}}      \def\colb{\color{black}}   \definecolor{colorpppp}{rgb}{0.6,0.0,0.1}   \definecolor{colorgggg}{rgb}{.0,0.4,0.0}   \definecolor{colorhhhh}{rgb}{0,0.6,0.2}   \definecolor{colorgray}{rgb}{0.8,0.8,0.8}         \definecolor{coloroftheorems}{rgb}{0.6,0.0,0.6}   \definecolor{colorigor}{rgb}{1, 0.2, 0.8}   \definecolor{amethyst}{rgb}{0.6, 0.4, 0.8}               \definecolor{colororange}{rgb}{0.8,0.2,0}   \definecolor{colorpurple}{rgb}{0.6,0.0,0.6}    \else      \def\cole{}   \definecolor{colorcccc}{rgb}{0,0,0}                  \def\colr{\color{black}}      \def\colb{\color{black}}   \definecolor{colorpppp}{rgb}{0,0,0}   \definecolor{colorgggg}{rgb}{0,0,0}   \definecolor{colorhhhh}{rgb}{0,0,0}   \definecolor{colorgray}{rgb}{0,0,0}         \definecolor{coloroftheorems}{rgb}{0,0,0}   \definecolor{colorigor}{rgb}{0,0,0}   \definecolor{amethyst}{rgb}{0,0,0}      \def\cole{\color{coloroftheorems}}             \definecolor{colororange}{rgb}{0.8,0.2,0}   \definecolor{colorpurple}{rgb}{0.6,0.0,0.6}    \fi   \def\bea{\begin{align}}   \def\ena{\end{align}}          
 \def\bega{\begin{aligned}}   \def\enda{\end{aligned}}               \def\bcase{\begin{cases}}   \def\ecase{\end{cases}}  \def\bmx{\begin{bmatrix}}   \def\emx{\end{bmatrix}}  \def\cf{\mathcal{F}}       \def\dd{d} \def\uu{{{u}}} \def\WW{ W} \def\sadklfjsdfgsdfgsdfgsdfgdsfgsdfgadfasdf{\EE}\def\sifpoierjsodfgupoefasdfgjsdfgjsdfgjsdjflxncvzxnvasdjfaopsruosihjsdfghajsdflahgfsif{\int} \def\sifpoierjsodfgupoefasdfgjsdfgjsdfgjsdjflxncvzxnvasdjfaopsruosihjsdgghajsdflahgfsif{\Vert}  \def\sifpoierjsodfgupoefasdfgjsdfgjsdfgjsdjflxncvzxnvasdjfaopsruosihjsdjghajsdflahgfsif{\rho} \def\sifpoierjsodfgupoefasdfgjsdfgjsdfgjsdjflxncvzxnvasdjfaopsruosihjsdkghajsdflahgfsif{\phi} \def\sifpoierjsodfgupoefasdfgjsdfgjsdfgjsdjflxncvzxnvasdjfaopsruosihjsdlghajsdflahgfsif{\psi}   \def\NNp{{\mathbb N}}                      
\begin{document} \baselineskip=12.6pt $\,$ \vskip1.2truecm \title[Global existence of the stochastic Navier-Stokes equations with small $L^3$ data]{Global existence of the stochastic Navier-Stokes equations in $L^3$ with small data} \author[I.~Kukavica]{Igor Kukavica} \address{Department of Mathematics, University of Southern California, Los Angeles, CA 90089} \email{kukavica@usc.edu} \author[F.H.~Xu]{Fanhui Xu} \address{Department of Mathematics, Harvard University, Cambridge, MA 02138} \email{fanhui.xu.math@gmail.com} \begin{abstract} We address the global-in-time existence and pathwise uniqueness of solutions for the stochastic incompressible Navier-Stokes equations with a multiplicative noise on the three-dimensional torus. Under natural smallness conditions on the noise, we prove the almost global existence result for small $L^{3}$ data. Namely, we show that for data sufficiently small, there exists a global-in-time strong $L^{3}$ solution in a space of probability arbitrarily close to~$1$.  \hfill  \today \end{abstract} \maketitle \date{} \startnewsection{Introduction}{sec01} In this paper, we consider the stochastic incompressible Navier-Stokes equations (SNSE)    \begin{align}    \begin{split}     &\partial_t u - \Delta u  + \mathcal{P}((u\cdot\nabla) u) = \sigma(t,u) \dot{W}(t),     \\     &\nabla\cdot u = 0   \end{split}   \label{EQ01}   \end{align} on the three-dimensional torus~$\mathbb{T}^3$, where $\mathcal{P}$ is the Leray projector.  Here, $u$ denotes the velocity field of the stochastic flow, and the stochastic term, $\sigma(t, u) \dot{W}(t)$, denotes a time-dependent infinite-dimensional multiplicative noise. We assume that the initial datum  \begin{equation}    \llabel{8ThswELzXU3X7Ebd1KdZ7v1rN3GiirRXGKWK099ovBM0FDJCvkopYNQ2aN94Z7k0UnUKamE3OjU8DFYFFokbSI2J9V9gVlM8ALWThDPnPu3EL7HPD2VDaZTggzcCCmbvc70qqPcC9mt60ogcrTiA3HEjwTK8ymKeuJMc4q6dVz200XnYUtLR9GYjPXvFOVr6W1zUK1WbPToaWJJuKnxBLnd0ftDEbMmj4loHYyhZyMjM91zQS4p7z8eKa9h0JrbacekcirexG0z4n3xz0QOWSvFj3jLhWXUIU21iIAwJtI3RbWa90I7rzAIqI3UElUJG7tLtUXzw4KQNETvXzqWaujEMenYlNIzLGxgB3AuJ86VS6RcPJ8OXWw8imtcKZEzHop84G1gSAs0PCowMI2fLKTdD60ynHg7lkNFjJLqOoQvfkfZBNG3o1DgCn9hyUh5VSP5z61qvQwceUdVJJsBvXDG4ELHQHIaPTbMTrsLsmtXGyOB7p2Os43USbq5ik4Lin769OTkUxmpI8uGYnfBKbYI9AQzCFw3h0geJftZZKU74rYleajmkmZJdiTGHOOaSt1NnlB7Y7h0yoWJryrVrTzHO82S7oubQAWx9dz2XYWBe5Kf3ALsUFvqgtM2O2IdimrjZ7RN284KGYtrVaWW4nTZXVbRVoQ77hVLX6K2kqFWFmaZnsF9Chp8KxrscSGPiStVXBJ3xZcD5IP4Fu9LcdTR2VwbcLDlGK1ro3EEyqEAzw6sKeEg2sFfjzMtrZ9kbdxEQ02}    u|_{t=0} = u_0  \end{equation} is a random function in $L^1(\Omega, L^{3}(\TT^3))$ such that $\nabla\cdot u_0=0$ and $\sifpoierjsodfgupoefasdfgjsdfgjsdfgjsdjflxncvzxnvasdjfaopsruosihjsdfghajsdflahgfsif_{\TT^3} u_0=0$. \par The well-posedness theory for the SNSE was initiated by Bensoussan and Temam in \cite{BeT}, who obtained the local existence for the SNSE with an additive noise in the Hilbert space $V$ (see \cite{CF,GZ,T} for the definition of the space).  The main device in establishing the existence theory was the It\^o isometry, which is well-adapted to the Hilbert space setting.  For the introduction to the SNSE with multiplicative noise and the associated difficulties, see~\cite{F}. Also, various notions of solutions emerged from these works and other
studies of stochastic evolution equations; see~\cite{BCF,BF,BT,BR,FRS,GV,MeS,MoS,MR2,ZBL}. Showing the existence of a probabilistically strong solution is challenging.  Many papers have focused on this issue in the context of the SNSE with multiplicative noise, starting with the works of Krylov~\cite{Kr} and Mikulevicius and Rozovski~\cite{MR1}, who established the local existence in $W^{1,p}$ spaces for the SNSE. The local and global existence of solutions was addressed by Kim, who showed in \cite{Ki} the local and almost global existence of solutions with the initial data in $H^{1/2+}$ and the almost global existence under a smallness assumption. In the 2D case, Glatt-Holtz and Ziane proved the global existence of the NSE in a smooth bounded domain with the initial data in~$V$ (see \cite{CF,GZ} for definition of the space).  For other aspects of the well-posedness theory for the stochastic fluid equations, see~\cite{CC,DZ,CFH,FS,KV}.  Using maximal regularity arguments, Agresti and Veraar established in \cite{AV} the local existence result in a range of Besov spaces; in particular, they considered $B^{-1+3/q}_{3,3}$ with an interval of $q$ which includes $q=3$, using maximal regularity arguments.  For the existence theory in the deterministic case, see~\cite{FJR,K}.  Note that for the additive noise, one may separately consider the linear equation for the linear equation with this noise. The existence for the difference of the sought-after solution and the solution of the linear equation can then be studied using deterministic methods. \par In a sequence of works, the authors of the present paper have addressed the case of $L^{p}$ data with the goal of understanding the evolution of $L^{p}$ norms. The main difficulty is the use the It\^o isometry, which is normally unavailable in the non-Hilbert setting. In the initial work~\cite{KXZ}, the authors obtained the local pathwise solutions and almost global solutions for the initial data in $L^{p}$, where $p\geq5$, with the smallness assumptions in the case of the global existence, for the torus setting. The result was improved to $p>3$ in \cite{KX,KWX}, thus almost recovering the deterministic counterpart. The main ideas of the approach were the $L^p$-type energy inequalities, inspired by~\cite{R}, and the direct convergence of the Galerkin approximates to the solution, thus also providing explicitly computable convergence rates of Galerkin approximates (with significant modifications necessary when the spatial domain is~$\mathbb{R}^{3}$~\cite{KWX}). \par However, the methods in \cite{KX} and \cite{KXZ} do not apply to the borderline case $p=3$ for the following reasons. When establishing the convergence of the approximates, the most challenging terms are the ones in which we need to show the convergence of the difference of Galerkin projectors, which are, however, only known to converge weakly.  For this purpose, we interpolate the nonlinear part between the $L^{p}$ and $L^{2}$ norms , which achieves the goal by the use of the $L^{2}$ Poincar\'e type inequality.  While this accomplishes the task of showing the convergence, a part requiring Poincar\'e inequalities does not allow us to reach the critical exponent~$p=3$.  We believe that this is a substantial difficulty, and a construction by approximating the equation and passing to the limit needs to be significantly modified. \par In this paper, we obtain the almost global existence (i.e., the global existence with a large probability) for small data in $L^{3}$ in three space dimensions. As the approximation procedure is not suitable for the reasons outlined in the previous paragraph, we introduce a new approach by splitting the data and the equation. In the first step, we separate the small initial data into an infinite sequence of parts that rapidly converge to the original in~$\mathcal{F}_0$. In the next step, we split the equation into a sequence of Navier-Stokes-like equations. These are arranged so that the (infinite) sum of their solutions converges to the solution of the original equations, as shown in the final step of the proof. We then solve these equations inductively. Note that solving each equation in the critical space presents the same difficulty as when not splitting the data. We overcome this by introducing a sequence of cutoffs and working in a higher regularity space. Specifically, we choose~$L^{6}$ to apply the results from \cite{KXZ}. The cutoffs allow the solution to be defined for all time and be globally bounded in $L^{6}$, as shown in Lemma~\ref{L07} below. However, we need to prove that the solution obeys the Navier-Stokes-like system, i.e., that the $L^{3}$ and $L^{6}$ thresholds are reached with a low probability. Note the $L^{6}$ norm of the data and the solution can actually be very large. But we show that the $L^{3}$ control allows us to decouple the nonlinearity, ensuring the high $L^{6}$ threshold is only reached on an insignificant portion of the probability space. In the last step, we prove that the sum of the sequence of solutions converges to the solution of the original SNSE. Finally, we point out that our results also apply in to other space dimensions $d\geq4$ with the Lebesgue exponent~$d$; for clarity, we provide the details only for the case~$d=3$. \par The paper is organized as follows. In Section~\ref{sec02}, we introduce the notation and state the main result on the global existence with large probability for small initial data.  Section~\ref{sec03} explains the decomposition of initial data (Lemma~\ref{L05}) and introduces the sequence of Navier-Stokes-like equations (see \eqref{EQ32} and~\eqref{EQ34}). Lemma~\ref{L06} contains the $L^{6}$ global-in-time existence result for the $k$-th iterate. The lemma which follows provides the $L^{3}$ and $L^{6}$ controls of solutions to the approximate equations.  The next lemma shows the boundedness in $L^{3}$ provided by the cutoffs.  The rest of the section establishes the finiteness and positivity of the associated stopping times. \par \startnewsection{Preliminaries and auxiliary results}{sec02} \subsection{Notation and preliminaries}\colb \par For a function $u$ on $\mathcal{D}=\mathbb{T}^{3}$ satisfying suitable integrability assumptions, we denote by $\mathcal{P}$ the Leray projector \begin{equation} ( \mathcal{P} \uu)_j( x)=\sum_{k=1}^{3}( \delta_{jk}+R_j R_k) \uu_k( x)\comma j=1,2,3, \llabel{Nw66cxftlzDGZhxQAWQKkSXjqmmrEpNuG6Pyloq8hHlSfMaLXm5RzEXW4Y1Bqib3UOhYw95h6f6o8kw6frZwg6fIyXPnae1TQJMt2TTfWWfjJrXilpYGrUlQ4uM7Dsp0rVg3gIEmQOzTFh9LAKO8csQu6mh25r8WqRIDZWgSYkWDulL8GptZW10GdSYFUXLzyQZhVZMn9amP9aEWzkau06dZghMym3RjfdePGln8s7xHYCIV9HwKa6vEjH5J8Ipr7NkCxWR84TWnqs0fsiPqGgsId1fs53AT71qRIczPX77Si23GirL9MQZ4FpigdruNYth1K4MZilvrRk6B4W5B8Id3Xq9nhxEN4P6ipZla2UQQx8mdag7rVD3zdDrhBvkLDJotKyV5IrmyJR5etxS1cvEsYxGzj2TrfSRmyZo4Lm5DmqNiZdacgGQ0KRwQKGXg9o8v8wmBfUutCOcKczzkx4UfhuAa8pYzWVq9Sp6CmAcZLMxceBXDwugsjWuiiGlvJDb08hBOVC1pni64TTqOpzezqZBJy5oKS8BhHsdnKkHgnZlUCm7j0IvYjQE7JN9fdEDddys3y1x52pbiGLca71jG3euliCeuzv2R40Q50JZUBuKdU3mMay0uoS7ulWDh7qG2FKw2TJXzBES2JkQ4UDy4aJ2IXs4RNH41spyTGNhhk0w5ZC8B3nUBp9p8eLKh8UO4fMqY6wlcAGMxCHtvlOxMqAJoQQU1e8a2aX9Y62rlIS6dejKY3KCUm257oClVeEEQ07} \end{equation}  where $\delta_{jk}=1$ if $j=k$, or else $\delta_{jk}=0$, and $R_j$, for $j=1,2,3$, represent the Riesz transforms. The projector $\mathcal{P}$ was used in \eqref{EQ01}, to eliminate the pressure gradient. We impose that $\div (\sigma(t, u))=0$ for all $t\geq 0$ if $\div u=0$, as otherwise the non-zero orthogonal component to the divergence-free part can be grouped with the pressure gradient and eliminated by~$\mathcal{P}$. We also assume that $\sigma$ has zero mean over~$\mathbb{T}^{3}$. We shall restrict our attention to the SNSE driven by a cylindrical multiplicative noise satisfying the assumptions, \begin{align}   & \sifpoierjsodfgupoefasdfgjsdfgjsdfgjsdjflxncvzxnvasdjfaopsruosihjsdgghajsdflahgfsif\sigma(t, u)\sifpoierjsodfgupoefasdfgjsdfgjsdfgjsdjflxncvzxnvasdjfaopsruosihjsdgghajsdflahgfsif_{\mathbb{L}^p} \le  C(\sifpoierjsodfgupoefasdfgjsdfgjsdfgjsdjflxncvzxnvasdjfaopsruosihjsdgghajsdflahgfsif u\sifpoierjsodfgupoefasdfgjsdfgjsdfgjsdjflxncvzxnvasdjfaopsruosihjsdgghajsdflahgfsif_{p}+1) \quad \mbox{ for all } t\comma p=3,6, \label{EQ03}  \\   & \sifpoierjsodfgupoefasdfgjsdfgjsdfgjsdjflxncvzxnvasdjfaopsruosihjsdgghajsdflahgfsif\sigma(t, u_1)-\sigma(t, u_2)\sifpoierjsodfgupoefasdfgjsdfgjsdfgjsdjflxncvzxnvasdjfaopsruosihjsdgghajsdflahgfsif_{\mathbb{L}^p} \le C \epsilon_{\sigma}\sifpoierjsodfgupoefasdfgjsdfgjsdfgjsdjflxncvzxnvasdjfaopsruosihjsdgghajsdflahgfsif  u_1-u_2\sifpoierjsodfgupoefasdfgjsdfgjsdfgjsdjflxncvzxnvasdjfaopsruosihjsdgghajsdflahgfsif_{p}\quad \mbox{ for all } t\mbox{ and } u_1, u_2\in L^p(\TT^3) \comma p=3,6,\label{EQ30}   \\&   \sigma(t,0)=0\quad \mbox{ for all } t    \label{EQ04}   , \end{align} where $\epsilon_{\sigma}\in(0,1]$ is a small parameter and the $\mathbb{L}^p$-norm is defined in \eqref{EQ11} below by setting $s=0$. This setup includes the linear-structured noise. For instance,    \begin{equation}      \sigma(t, u) = \frac{1}{C}\epsilon_{\sigma}\mathcal{P}_0(\sifpoierjsodfgupoefasdfgjsdfgjsdfgjsdjflxncvzxnvasdjfaopsruosihjsdkghajsdflahgfsif*P(t,u))   \llabel{e8p1zUJSvbmLdFy7ObQFNlJ6FRdFkEmqMN0FdNZJ08DYuq2pLXJNz4rOZkZX2IjTD1fVtz4BmFIPi0GKDR2WPhOzHzTLPlbAEOT9XW0gbTLb3XRQqGG8o4TPE6WRcuMqMXhs6xOfv8stjDiu8rtJtTKSKjlGkGwt8nFDxjA9fCmiuFqMWjeox5Akw3wSd81vK8c4C0OdjCHIseHUOhyqGx3KwOlDql1Y4NY4IvI7XDE4cFeXdFVbCFHaJsb4OC0huMj65J4favgGo7qY5XtLyizYDvHTRzd9xSRVg0Pl6Z89XzfLhGlHIYBx9OELo5loZx4wag4cnFaCEKfA0uzfwHMUVM9QyeARFe3Py6kQGGFxrPf6TZBQRla1a6AekerXgkblznSmmhYjcz3ioWYjzh33sxRJMkDosEAAhUOOzaQfKZ0cn5kqYPnW71vCT69aEC9LDEQ5SBK4JfVFLAoQpNdzZHAlJaLMnvRqH7pBBqOr7fvoaeBSA8TEbtxy3jwK3v244dlfwRLDcgX14vTpWd8zyYWjweQmFyD5y5lDNlZbAJaccldkxYn3VQYIVv6fwmHz19w3yD4YezRM9BduEL7D92wTHHcDogZxZWRWJxipvfz48ZVB7FZtgK0Y1woCohLAi70NOTa06u2sYGlmspVl2xy0XB37x43k5kaoZdeyEsDglRFXi96b6w9BdIdKogSUMNLLbCRzeQLUZmi9O2qvVzDhzv1r6spSljwNhG6s6iSdXhobhbp2usEdl95LPAEQ06} \end{equation} fulfills all above assumptions if $\sifpoierjsodfgupoefasdfgjsdfgjsdfgjsdjflxncvzxnvasdjfaopsruosihjsdkghajsdflahgfsif$ is a function in $C^\infty(\TT^3)$, $P(t, \cdot)$ is a time-dependent operator satisfying \eqref{EQ03} and~\eqref{EQ04}, and $C$ is sufficiently large\colb; the symbol $\mathcal{P}_0$ denotes the Leray projection composed with the mean-zero projection. \colb \par \colb Let $(\Omega, \mathcal{F},(\mathcal{F}_t)_{t\geq 0},\mathbb{P})$ be a given stochastic basis satisfying the standard assumptions and $\mathcal{H}, \mathcal{Y}$ separable Hilbert spaces. Suppose that $\{W_k: k\in\NNp\}$ is a family of independent Brownian motions in $(\Omega, \mathcal{F},(\mathcal{F}_t)_{t\geq 0},\mathbb{P})$ and $\{\mathbf{e}_k\}_{k\geq 1}$ a complete orthonormal basis of~$\mathcal{H}$. Then, $\WW( t,\omega):=\sum_{k\geq 1} W_k( t,\omega) \mathbf{e}_k$ defines a cylindrical Wiener process over~$\mathcal{H}$. We use $l^2( \mathcal{H},\mathcal{Y})$ for the set of Hilbert-Schmidt operators, which is a subset of bounded linear operators from $\mathcal{H}$ to $\mathcal{Y}$ satisfying  \begin{equation} \sifpoierjsodfgupoefasdfgjsdfgjsdfgjsdjflxncvzxnvasdjfaopsruosihjsdgghajsdflahgfsif G\sifpoierjsodfgupoefasdfgjsdfgjsdfgjsdjflxncvzxnvasdjfaopsruosihjsdgghajsdflahgfsif_{l^2( \mathcal{H},\mathcal{Y})}^2:= \sum_{k=1}^{\dim \mathcal{H}} | G \mathbf{e}_k|_{\mathcal{Y}}^2<\infty. \llabel{trBBibPCwShpFCCUayzxYS578rof3UwDPsCIpESHB1qFPSW5tt0I7ozjXun6cz4cQLBJ4MNmI6F08S2Il8C0JQYiUlI1YkKoiubVtfGuOegSllvb4HGn3bSZLlXefaeN6v1B6m3Ek3JSXUIjX8PdNKIUFNJvPHaVr4TeARPdXEV7BxM0A7w7jep8M4QahOihEVoPxbi1VuGetOtHbPtsO5r363Rez9nA5EJ55pcLlQQHg6X1JEWK8Cf9kZm14A5lirN7kKZrY0K10IteJd3kMGwopVnfYEG2orGfj0TTAXtecJKeTM0x1N9f0lRpQkPM373r0iA6EFs1F6f4mjOB5zu5GGTNclBmkb5jOOK4ynyMy04oz6m6AkzNnPJXhBnPHRuN5LyqSguz5NnW2lUYx3fX4huLieHL30wg93Xwcgj1I9dO9bEPCR0vc6A005QVFy1lyK7oVRVpbJzZnxYdcldXgQaDXY3gzx368ORJFK9UhXTe3xYbVHGoYqdHgVyf5kKQzmmK49xxiApjVkwgzJOdE4vghAv9bVIHewcVqcbSUcF1pHzolNjTl1BurcSamIPzkUS8wwSa7wVWR4DLVGf1RFr599HtyGqhDT0TDlooamgj9ampngaWenGXU2TzXLhIYOW5v2dArCGsLks53pWAuAyDQlF6spKydHT9Z1Xn2sU1g0DLlaoYuLPPB6YKoD1M0fiqHUl4AIajoiVQ6afVT6wvYMd0pCYBZp7RXHdxTb0sjJ0Beqpkc8bNOgZ0TrEQ08} \end{equation} With this definition of $l^2( \mathcal{H},\mathcal{Y})$-norms, the Burkholder-Davis-Gundy (BDG) inequality holds for all $p\in [1,\infty)$ and reads  \begin{equation} \sadklfjsdfgsdfgsdfgsdfgdsfgsdfgadfasdf \biggl[ \sup_{s\in(0,t]}\biggl| \sifpoierjsodfgupoefasdfgjsdfgjsdfgjsdjflxncvzxnvasdjfaopsruosihjsdfghajsdflahgfsif_0^s G \,d\WW_r \biggr|_{\mathcal{Y}}^p\biggr] \leq  C_p \sadklfjsdfgsdfgsdfgsdfgdsfgsdfgadfasdf\biggl[ \left(\sifpoierjsodfgupoefasdfgjsdfgjsdfgjsdjflxncvzxnvasdjfaopsruosihjsdfghajsdflahgfsif_0^t \sifpoierjsodfgupoefasdfgjsdfgjsdfgjsdjflxncvzxnvasdjfaopsruosihjsdgghajsdflahgfsif G\sifpoierjsodfgupoefasdfgjsdfgjsdfgjsdjflxncvzxnvasdjfaopsruosihjsdgghajsdflahgfsif^2_{ l^2( \mathcal{H},\mathcal{Y})}\, dr \right)^{p/2}\biggr]. \llabel{0wqh1C2HnYQXM8nJ0PfuGJBe2vuqDukLVAJwv2tYcJOM1uKh7pcgoiiKt0b3eURecDVM7ivRMh1T6pAWlupjkEjULR3xNVAu5kEbnrVHE1OrJ2bxdUPyDvyVix6sCBpGDSxjBCn9PFiuxkFvw0QPofRjy2OFItVeDBtDzlc9xVyA0de9Y5h8c7dYCFkFlvWPDSuNVI6MZ72u9MBtK9BGLNsYplX2yb5UHgHADbW8XRzkvUJZShWQHGoKXyVArsHTQ1VbddK2MIxmTf6wET9cXFbuuVxCbSBBp0v2JMQ5Z8z3pMEGpTU6KCcYN2BlWdp2tmliPDHJQWjIRRgqi5lAPgiklc8ruHnvYFMAIrIh7Ths9tEhAAYgSswZZfws19P5weJvMimbsFHThCnSZHORmyt98w3U3zantzAyTwq0CjgDIEtkbh98V4uo52jjAZz1kLoC8oHGvZ5RuGwv3kK4WB50ToMtq7QWG9mtbSIlc87ruZfKwZPh31ZAOsq8ljVQJLTXCgyQn0vKESiSqBpawtHxcIJe4SiE1izzximkePY3s7SX5DASGXHqCr38VYP3HxvOIRZtMfqNoLFoU7vNdtxzwUkX32t94nFdqqTRQOvYqEbigjrSZkTN7XwtPFgNsO7M1mbDAbtVB3LGCpgE9hVFKYLcSGmF8637aZDiz4CuJbLnpE7yl85jg1MTPOLOGEPOeMru1v25XLJFzhwgElnuYmqrX1YKVKvgmMK7gI46h5kZBOoJtfC5gVvA1kNJr2EQ09} \end{equation}  \par For $s\geq0$ and $p\in[1,\infty)$, we write $W^{s,p}$ for the standard Sobolev spaces and use   \begin{equation}    \mathbb{W}^{s,p}=\left\{f\colon\TT^3\to l^2( \mathcal{H},\mathcal{Y})    :    f\mathbf{e}_k\in W^{s,p}(\TT^3) \mbox{ for each }k, \mbox{ and } \sifpoierjsodfgupoefasdfgjsdfgjsdfgjsdjflxncvzxnvasdjfaopsruosihjsdfghajsdflahgfsif_{\TT^3} \bigl\sifpoierjsodfgupoefasdfgjsdfgjsdfgjsdjflxncvzxnvasdjfaopsruosihjsdgghajsdflahgfsif \mathcal{F}^{-1}\bigl((1+|\xi|^2)^{s/2}\mathcal{F}f\bigr)\bigr\sifpoierjsodfgupoefasdfgjsdfgjsdfgjsdjflxncvzxnvasdjfaopsruosihjsdgghajsdflahgfsif_{l^2( \mathcal{H},\mathcal{Y})}^p \,dx<\infty   \right\}   \llabel{o7om1XNpUwtCWXfFTSWDjsIwuxOJxLU1SxA5ObG3IOUdLqJcCArgzKM08DvX2mui13Tt71IwqoFUI0EEf5SV2vxcySYIQGrqrBHIDTJv1OB1CzDIDdW4E4jJmv6KtxoBOs9ADWBq218BJJzRyUQi2GpweET8LaO4ho95g4vWQmoiqjSwMA9CvnGqxl1LrYuMjGboUpuvYQ2CdBlAB97ewjc5RJESFGsORedoM0bBk25VEKB8VA9ytAEOyofG8QIj27aI3jyRmzyETKxpgUq4BvbcD1b1gKByoE3azgelVNu8iZ1w1tqtwKx8CLN28ynjdojUWvNH9qyHaXZGhjUgmuLI87iY7Q9MQWaiFFSGzt84mSQq25ONltTgbl8YDQSAzXqpJEK7bGL1UJn0f59vPrwdtd6sDLjLoo18tQXf55upmTadJDsELpH2vqYuTAmYzDg951PKFP6pEizIJQd8NgnHTND6z6ExRXV0ouUjWTkAKABeAC9Rfjac43AjkXnHdgSy3v5cBets3VXqfpPBqiGf90awg4dW9UkvRiJy46GbH3UcJ86hWVaCMjedsUcqDSZ1DlP2mfBhzu5dvu1i6eW2YNLhM3fWOdzKS6Qov14wxYYd8saS38hIlcPtS4l9B7hFC3JXJGpstll7a7WNrVMwunmnmDc5duVpZxTCl8FI01jhn5Bl4JzaEV7CKMThLji1gyZuXcIv4033NqZLITGUx3ClPCBKO3vRUimJql5blI9GrWyirWHoflH73ZTeZXEQ10}   \end{equation} for the Sobolev norms on noise coefficients. The latter is a Banach space when endowed with the norm  \begin{equation} \sifpoierjsodfgupoefasdfgjsdfgjsdfgjsdjflxncvzxnvasdjfaopsruosihjsdgghajsdflahgfsif f\sifpoierjsodfgupoefasdfgjsdfgjsdfgjsdjflxncvzxnvasdjfaopsruosihjsdgghajsdflahgfsif_{\mathbb{W}^{s,p}}:=\left( \sifpoierjsodfgupoefasdfgjsdfgjsdfgjsdjflxncvzxnvasdjfaopsruosihjsdfghajsdflahgfsif_{\TT^3} \big\sifpoierjsodfgupoefasdfgjsdfgjsdfgjsdjflxncvzxnvasdjfaopsruosihjsdgghajsdflahgfsif \mathcal{F}^{-1}\big[(1+|\xi|^2)^{s/2}\mathcal{F}f\big]\big\sifpoierjsodfgupoefasdfgjsdfgjsdfgjsdjflxncvzxnvasdjfaopsruosihjsdgghajsdflahgfsif_{l^2( \mathcal{H},\mathcal{Y})}^p \,dx\right)^{1/p}.  \label{EQ11} \end{equation}
We abbreviate $\mathbb{W}^{0,p}$ as~$\mathbb{L}^{p}$. Denoting $( \mathcal{P}f)  \mathbf{e}_k=\mathcal{P} (f \mathbf{e}_k)$, we have $\mathcal{P}f\in \mathbb{W}^{s,p}$ whenever $ f\in \mathbb{W}^{s,p}$. \par For $p\in[2,\infty)$ and $r\in[2,6]$, recall an inequality \cite[Lemma~3]{KZ}, which states \begin{equation}\llabel{kopeq8XL1RQ3aUj6Essnj20MA3AsrSVft3F9wzB1qDQVOnHCmmP3dWSbjstoj3oGjadvzqcMB6Y6kD9sZ0bdMjtUThULGTWU9Nmr3E4CNbzUOvThhqL1pxAxTezrHdVMgLYTTrSfxLUXCMrWAbE69K6XHi5re1fx4GDKkiB7f2DXzXez2k2YcYc4QjUyMYR1oDeYNWf74hByFdsWk4cUbCRDXaq4eDWd7qbOt7GOuoklgjJ00J9IlOJxntzFVBCFtpABpVLEE2y5Qcgb35DU4igj4dzzWsoNFwvqjbNFma0amFKivAappzMzrVqYfOulMHafaBk6JreOQBaTEsJBBtHXjn2EUCNleWpcvWJIggWXKsnB3wvmoWK49Nl492ogR6fvc8ffjJmsWJr0jzI9pCBsIUVofDkKHUb7vxpuQUXA6hMUryvxEpcTqlTkzz0qHbXpO8jFuh6nwzVPPzpA8961V78cO2Waw0yGnCHVqBVjTUHlkp6dGHOdvoEE8cw7QDL1o1qg5TXqoV720hhQTyFtpTJDg9E8Dnsp1QiX98ZVQN3sduZqcn9IXozWhFd16IB0K9JeBHvi364kQlFMMJOn0OUBrnvpYyjUBOfsPzxl4zcMnJHdqOjSi6NMn8bR6kPeklTFdVlwDSrhT8Qr0sChNh88j8ZAvvWVD03wtETKKNUdr7WEK1jKSIHFKh2sr1RRVRa8JmBtkWI1ukuZTF2B4p8E7Y3p0DX20JM3XzQtZ3bMCvM4DEAwBFp8qYKpLSEQ13} \sifpoierjsodfgupoefasdfgjsdfgjsdfgjsdjflxncvzxnvasdjfaopsruosihjsdgghajsdflahgfsif |f|^{p/2}\sifpoierjsodfgupoefasdfgjsdfgjsdfgjsdjflxncvzxnvasdjfaopsruosihjsdgghajsdflahgfsif_{r}\leq C\sifpoierjsodfgupoefasdfgjsdfgjsdfgjsdjflxncvzxnvasdjfaopsruosihjsdgghajsdflahgfsif |f|^{p/2}\sifpoierjsodfgupoefasdfgjsdfgjsdfgjsdjflxncvzxnvasdjfaopsruosihjsdgghajsdflahgfsif_{2}^{1-\alpha}\sifpoierjsodfgupoefasdfgjsdfgjsdfgjsdjflxncvzxnvasdjfaopsruosihjsdgghajsdflahgfsif \nabla(|f|^{p/2})\sifpoierjsodfgupoefasdfgjsdfgjsdfgjsdjflxncvzxnvasdjfaopsruosihjsdgghajsdflahgfsif_{2}^{\alpha}, \end{equation} where $\alpha=3(1/2-1/r)$, valid for $f$ with the average~$0$. Particularly, setting $r=6$  yields  \begin{equation}\label{EQ14} \sifpoierjsodfgupoefasdfgjsdfgjsdfgjsdjflxncvzxnvasdjfaopsruosihjsdgghajsdflahgfsif f\sifpoierjsodfgupoefasdfgjsdfgjsdfgjsdjflxncvzxnvasdjfaopsruosihjsdgghajsdflahgfsif_{3p}^p\leq C_p \sifpoierjsodfgupoefasdfgjsdfgjsdfgjsdjflxncvzxnvasdjfaopsruosihjsdgghajsdflahgfsif \nabla(|f|^{p/2})\sifpoierjsodfgupoefasdfgjsdfgjsdfgjsdjflxncvzxnvasdjfaopsruosihjsdgghajsdflahgfsif_{2}^2     \comma p\in[2,\infty)    , \end{equation} under the condition $\sifpoierjsodfgupoefasdfgjsdfgjsdfgjsdjflxncvzxnvasdjfaopsruosihjsdfghajsdflahgfsif_{\TT^3} f=0$. As usual, $C$ denotes a sufficiently large positive constant, whose value may change from line to line. \par In this paper, we only consider probabilistically strong solutions with respect to a given stochastic basis $(\Omega, \cf, (\cf_t)_{t\geq 0}, \PP)$. Namely, when substituted in, the solution satisfies the equation's analytic weak formulation on $(\Omega, \cf, (\cf_t)_{t\geq 0}, \PP)$ up to a stopping time~$\tau$ (see the definition after Theorem~\ref{T01}). \par Assuming that $\sadklfjsdfgsdfgsdfgsdfgdsfgsdfgadfasdf[\vert u_0\sifpoierjsodfgupoefasdfgjsdfgjsdfgjsdjflxncvzxnvasdjfaopsruosihjsdgghajsdflahgfsif_3]$ is small, we assert the existence and pathwise uniqueness of a global $L^{3}$-solution with large probability, as stated next. \par \cole \begin{Theorem} \label{T01} (Global solution with large probability with small initial data in $\LLLot$ or~$\LLLit$) Let $\uu_0\in L^1(\Omega; L^3(\TT^3))$, $\nabla\cdot u_0=0$, and $\sifpoierjsodfgupoefasdfgjsdfgjsdfgjsdjflxncvzxnvasdjfaopsruosihjsdfghajsdflahgfsif_{\TT^3} u_0=0$, and assume that the assumptions~\eqref{EQ03}--\eqref{EQ04} hold, with $\epsilon_{\sigma}\in(0,1]$ being sufficiently small. For every $p_0\in(0,1]$, there is $\epsilon_0\in(0,1]$ such that if    \begin{equation}    \sadklfjsdfgsdfgsdfgsdfgdsfgsdfgadfasdf[\vert u_0\sifpoierjsodfgupoefasdfgjsdfgjsdfgjsdjflxncvzxnvasdjfaopsruosihjsdgghajsdflahgfsif_3]    \leq \epsilon_0     ,    \llabel{o1a5sdRPfTg5R67v1T4eCJ1qg14CTK7u7agjQ0AtZ1Nh6hkSys5CWonIOqgCL3u7feRBHzodSJp7JH8u6RwsYE0mcP4rLaWAtlyRwkHF3eiUyhIiA19ZBu8mywf42nuyX0eljCt3Lkd1eUQEZoOZrA2OqfoQ5CahrByKzFgDOseim0jYBmXcsLAyccCJBTZPEjyzPb5hZKWOxT6dytu82IahtpDm75YDktQvdNjWjIQH1BAceSZKVVP136vL8XhMm1OHKn2gUykFUwN8JMLBqmnvGuwGRoWUoNZY2PnmS5gQMcRYHxLyHuDo8bawaqMNYtonWu2YIOzeB6RwHuGcnfio47UPM5tOjszQBNq7mcofCNjou83emcY81svsI2YDS3SyloBNx5FBVBc96HZEOXUO3W1fIF5jtEMW6KW7D63tH0FCVTZupPlA9aIoN2sf1Bw31ggLFoDO0Mx18ooheEdKgZBCqdqpasaHFhxBrEaRgAuI5dqmWWBMuHfv90ySPtGhFFdYJJLf3Apk5CkSzr0KbVdisQkuSAJEnDTYkjPAEMua0VCtCFfz9R6Vht8UacBe7opAnGa7AbLWjHcsnARGMbn7a9npaMflftM7jvb200TWxUC4lte929joZrAIuIao1ZqdroCL55LT4Q8kNyvsIzPx4i59lKTq2JBBsZbQCECtwarVBMTH1QR6v5srWhRrD4rwf8ik7KHEgeerFVTErONmlQ5LR8vXNZLB39UDzRHZbH9fTBhRwkA2n3pg4IEQ87}   \end{equation} then there exists a stopping time $\tau\in[0,\infty]$ and a unique solution $(u, \tau)$ of  \eqref{EQ01} on $(\Omega, \mathcal{F},(\mathcal{F}_t)_{t\geq 0},\mathbb{P})$ with the initial condition $u_0$ such that   \begin{align}   \begin{split}     \sadklfjsdfgsdfgsdfgsdfgdsfgsdfgadfasdf\biggl[     \sup_{0\leq s\leq \tau}\sifpoierjsodfgupoefasdfgjsdfgjsdfgjsdjflxncvzxnvasdjfaopsruosihjsdgghajsdflahgfsif\uu(s,\cdot)\sifpoierjsodfgupoefasdfgjsdfgjsdfgjsdjflxncvzxnvasdjfaopsruosihjsdgghajsdflahgfsif_3^3     +\sifpoierjsodfgupoefasdfgjsdfgjsdfgjsdjflxncvzxnvasdjfaopsruosihjsdfghajsdflahgfsif_0^{\tau}      \sum_{j}    \sifpoierjsodfgupoefasdfgjsdfgjsdfgjsdjflxncvzxnvasdjfaopsruosihjsdfghajsdflahgfsif_{\TT^3} | \nabla (|\uu_j(s,x)|^{3/2})|^2 \,dx ds     \biggr]     \leq      C\epsilon_0^{3}   \end{split}    \label{EQ15}   \end{align} and   \begin{equation}    \mathbb{P}[\tau<\infty]    \leq p_0.    \llabel{grHxdfEFuz6REtDqPdwN7HTVtcE18hW6yn4GnnCE3MEQ51iPsGZ2GLbtCSthuzvPFeE28MM23ugTCdj7z7AvTLa1AGLiJ5JwWCiDPyMqa8tAKQZ9cfP42kuUzV3h6GsGFoWm9hcfj51dGtWyZzC5DaVt2Wi5IIsgDB0cXLM1FtExERIZIZ0RtQUtWcUCmFmSjxvWpZcgldopk0D7aEouRkuIdOZdWFORuqbPY6HkWOVi7FuVMLWnxpSaNomkrC5uIZK9CjpJyUIeO6kgb7tr2SCYx5F11S6XqOImrs7vv0uvAgrb9hGPFnkRMj92HgczJ660kHbBBlQSIOY7FcX0cuyDlLjbU3F6vZkGbaKaMufjuxpn4Mi457MoLNW3eImcj6OOSe59afAhglt9SBOiFcYQipj5uN19NKZ5Czc231wxGx1utgJB4ueMxx5lrs8gVbZs1NEfI02RbpkfEOZE4eseo9teNRUAinujfeJYaEhns0Y6XRUF1PCf5eEAL9DL6a2vmBAU5AuDDtyQN5YLLWwPWGjMt4hu4FIoLCZLxeBVY5lZDCD5YyBwOIJeHVQsKobYdqfCX1tomCbEj5m1pNx9pnLn5A3g7Uv777YUgBRlNrTyjshaqBZXeAFtjyFlWjfc57t2fabx5Ns4dclCMJcTlqkfquFDiSdDPeX6mYLQzJzUmH043MlgFedNmXQPjAoba07MYwBaC4CnjI4dwKCZPO9wx3en8AoqX7JjN8KlqjQ5cbMSdhRFstQ8Qr2ve2EQ16}   \end{equation} If   \begin{equation}     \sifpoierjsodfgupoefasdfgjsdfgjsdfgjsdjflxncvzxnvasdjfaopsruosihjsdgghajsdflahgfsif u_0\sifpoierjsodfgupoefasdfgjsdfgjsdfgjsdjflxncvzxnvasdjfaopsruosihjsdgghajsdflahgfsif_{\LLLit}\leq \epsilon_0     ,    \label{EQ17}   \end{equation} then we can arrange that   \begin{equation}    \mathbb{P}[\tau>0]    =1    .    \llabel{HT0uO5WjTAiiIWn1CWrU1BHBMvJ3ywmAdqNDLY8lbxXMx0DDvco3RL9Qz5eqywVYqENnO8MH0PYzeVNi3yb2msNYYWzG2DCPoG1VbBxe9oZGcTU3AZuEKbkp6rNeTX0DSMczd91nbSVDKEkVazIqNKUQapNBP5B32EyprwPFLvuPiwRPl1GTdQBZEAw3d90v8P5CPAnX4Yo2q7syr5BW8HcT7tMiohaBW9U4qrbumEQ6XzMKR2BREFXk3ZOMVMYSw9SF5ekq0myNKGnH0qivlRA18CbEzidOiuyZZ6kRooJkLQ0EwmzsKlld6KrKJmRxls12KG2bv8vLxfJwrIcU6Hxpq6pFy7OimmodXYtKt0VVH22OCAjfdeTBAPvPloKQzLEOQlqdpzxJ6JIzUjnTqYsQ4BDQPW6784xNUfsk0aM78qzMuL9MrAcuVVKY55nM7WqnB2RCpGZvHhWUNg93F2eRT8UumC62VH3ZdJXLMScca1mxoOO6oOLOVzfpOBOX5EvKuLz5sEW8a9yotqkcKbDJNUslpYMJpJjOWUy2U4YVKH6kVC1Vx1uvykOyDszo5bzd36qWH1kJ7JtkgV1JxqrFnqmcUyZJTp9oFIcFAk0ITA93SrLaxO9oUZ3jG6fBRL1iZ7ZE6zj8G3MHu86Ayjt3flYcmTkjiTSYvCFtJLqcJPtN7E3POqGOKe03K3WV0epWXDQC97YSbADZUNp81GFfCPbj3iqEt0ENXypLvfoIz6zoFoF9lkIunXjYyYL52UEQ12}   \end{equation} \end{Theorem}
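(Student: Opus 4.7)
The plan is to follow the splitting strategy sketched in the introduction and to close the argument by summing suitably convergent iterates. First, using Lemma~\ref{L05}, we decompose $u_0 = \sum_{k\geq 0} u_0^{(k)}$ into $\mathcal{F}_0$-measurable pieces with rapidly decaying $L^3$-norms. We look for the sought solution in the form $u = \sum_{k\geq 0} v^{(k)}$, where the iterates $v^{(k)}$ satisfy the Navier-Stokes-like system \eqref{EQ32}--\eqref{EQ34}, designed so that formal summation over $k$ recovers~\eqref{EQ01}.

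Second, for each fixed $k$ the equation for $v^{(k)}$ is solved not in the critical space $L^3$ but in the higher-regularity space $L^6$, which permits invoking the framework of \cite{KXZ}. To make this global-in-time, we insert cutoffs tied to $L^3$ and $L^6$ thresholds; Lemma~\ref{L06} then gives a global $L^6$ strong solution of the cutoff system, and Lemma~\ref{L07} supplies the corresponding pathwise $L^6$ bound. An It\^o computation for $\|v^{(k)}\|_3^3$, using the key inequality \eqref{EQ14} with $p=3$ to absorb the dissipative term, the BDG inequality, and the noise assumptions \eqref{EQ03}--\eqref{EQ04} with $\epsilon_\sigma$ sufficiently small, then propagates $L^3$-smallness of the schematic form
\[
\EE\Bigl[\sup_{s\leq \tau_k}\|v^{(k)}(s)\|_3^3 + \int_0^{\tau_k}\!\!\int_{\TT^3}|\nabla(|v^{(k)}|^{3/2})|^2\,dx\,ds\Bigr]\leq C\,\EE[\|u_0^{(k)}\|_3^3],
\]
where $\tau_k$ is the first cutoff-activation time for the $k$-th iterate. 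By Chebyshev's inequality, the stopping time at which the $L^3$-cutoff activates is infinite with probability arbitrarily close to~$1$; this $L^3$-control then decouples the nonlinearity in the $L^6$ It\^o expansion, so that the $L^6$-cutoff likewise activates only on a small set.

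Third, we define a global stopping time $\tau$ as the first time at which any cutoff activates in any iterate, combined with a Borel-Cantelli-type summation in $k$, and we choose $\epsilon_0$ small enough relative to $p_0$ so that the aggregated exceedance probabilities satisfy $\PP[\tau<\infty]\leq p_0$. On $\{t<\tau\}$, the partial sums $\sum_{k\leq N}v^{(k)}$ are Cauchy in the spaces dictated by the $L^3$ estimate, so they converge to a limit $u$; verifying termwise that $u$ satisfies the analytic weak formulation of~\eqref{EQ01} up to~$\tau$ then yields~\eqref{EQ15}. Pathwise uniqueness in $L^3$ follows from a standard Gr\"onwall estimate on the difference of two solutions, again using \eqref{EQ14} to absorb the convective term and \eqref{EQ30} for the noise contribution. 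Under the stronger pathwise hypothesis~\eqref{EQ17}, the same $L^3$-comparison becomes deterministic rather than in expectation, giving $\PP[\tau>0]=1$.

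The main obstacle is the second step: the criticality of $L^3$ is precisely what obstructs the naive limiting procedure (the reason \cite{KX,KXZ} could not previously reach $p=3$), and the cutoff device merely replaces this obstruction by the requirement that the high $L^6$-threshold be crossed with small probability even though the initial $L^6$-norm can be arbitrarily large. Handling this decoupling carefully---using the $L^3$-smallness to dominate the $L^6$ It\^o expansion without a favorable Poincar\'e-type bound---is where the bulk of the delicate work resides, and it is carried out using the sequence of lemmas indicated in the introduction.
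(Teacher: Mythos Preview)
Your proposal follows essentially the paper's strategy: decompose the data via Lemma~\ref{L05}, solve the truncated system~\eqref{EQ34} in $L^6$ with cutoffs (Lemma~\ref{L06}), derive global $L^3$ and $L^6$ energy bounds (Lemma~\ref{L07}), bound the cutoff-activation probabilities by Markov and sum (Lemma~\ref{L10}), then pass to the limit in the weak formulation. Two points need sharpening, however.

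First, your displayed $L^3$ estimate for $v^{(k)}$ requires $\|u^{(k-1)}\|_3$ to be small on the interval where you compute, but your stopping time $\tau_k$ controls only the $k$-th iterate, not the earlier ones; writing the estimate up to $\tau_k$ therefore does not close. The paper resolves this circularity not by restricting to a stopping time but via Lemma~\ref{L09}: because every nonlinear and noise term in~\eqref{EQ34} carries the factor $\phi_k^2$, once $\|v^{(k)}\|_3$ exceeds $\bar\epsilon/2^{k-1}$ the equation reduces to the heat equation, which is nonincreasing in $L^3$. This yields the \emph{almost-sure global} bound $\sup_{t\geq 0}\|v^{(k)}(t)\|_3\leq\bar\epsilon/2^{k-1}$, independently of any stopping time, and it is this deterministic pointwise control that supplies the hypothesis $\|u^{(k-1)}\|_3\leq\epsilon$ needed to run Lemma~\ref{L07} with a $T$-independent constant and hence to obtain $\PP[\tau<\infty]\leq p_0$ rather than merely $\PP[\tau<T]\leq p_0$.

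Second, your justification of $\PP[\tau>0]=1$ under~\eqref{EQ17} is not right: the evolution remains stochastic, so nothing ``becomes deterministic.'' The paper's argument (Lemma~\ref{L08}) instead applies Lemma~\ref{L02} on $[0,\delta]$ to show that the probability of the $k$-th threshold being crossed in $[0,\delta]$ is $O(\delta/2^{3k})$; summing over $k$ gives $\PP[\tau<\delta]\leq C\delta$, and letting $\delta\to0$ yields the claim. The uniqueness argument in the paper also does not use Gr\"onwall but rather direct absorption of the convective and noise terms (via smallness of $\bar\epsilon$ and $\epsilon_\sigma$) into the dissipation through~\eqref{EQ14}, giving the analog of~\eqref{EQ61} with zero right-hand side.
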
 \colb \par To be precise, the pair $(\uu,\tau)$ above is called a solution if $\tau$ is a stopping time on $(\Omega, \cf, (\cf_t)_{t\geq 0}, \PP)$, and the process $u\in L^3(\Omega; C([0,\tau], L^3))$ is progressively measurable, divergence-free, and satisfies   \begin{align}    \begin{split}      (u_j( t \wedge \tau),\sifpoierjsodfgupoefasdfgjsdfgjsdfgjsdjflxncvzxnvasdjfaopsruosihjsdkghajsdflahgfsif)      &=      (u_{j, 0},\sifpoierjsodfgupoefasdfgjsdfgjsdfgjsdjflxncvzxnvasdjfaopsruosihjsdkghajsdflahgfsif)    + \sifpoierjsodfgupoefasdfgjsdfgjsdfgjsdjflxncvzxnvasdjfaopsruosihjsdfghajsdflahgfsif_{0}^{t\wedge\tau}          (u_j (r), \Delta\sifpoierjsodfgupoefasdfgjsdfgjsdfgjsdjflxncvzxnvasdjfaopsruosihjsdkghajsdflahgfsif )      \,dr    \\&\indeq    + \sifpoierjsodfgupoefasdfgjsdfgjsdfgjsdjflxncvzxnvasdjfaopsruosihjsdfghajsdflahgfsif_{0}^{t\wedge\tau}       \bigl(\bigl(\mathcal{P}  (u_m(r) u(r))\bigr)_j ,\partial_{m} \sifpoierjsodfgupoefasdfgjsdfgjsdfgjsdjflxncvzxnvasdjfaopsruosihjsdkghajsdflahgfsif\bigr)      \,dr      +\sifpoierjsodfgupoefasdfgjsdfgjsdfgjsdjflxncvzxnvasdjfaopsruosihjsdfghajsdflahgfsif_0^{t\wedge\tau} \bigl(\sigma_j(r, \uu(r)),\sifpoierjsodfgupoefasdfgjsdfgjsdfgjsdjflxncvzxnvasdjfaopsruosihjsdkghajsdflahgfsif\bigr)\,dW(r)\quad      \PP\mbox{-a.s.}     \comma j=1,2,3     ,    \end{split}    \llabel{bRBjxkQUSU9mmXtzIHOCz1KH49ez6PzqWF223C0Iz3CsvuTR9sVtQCcM1eopDPy2lEEzLU0USJtJb9zgyGyfiQ4foCx26k4jLE0ula6aSIrZQHER5HVCEBL55WCtB2LCmveTDzVcp7URgI7QuFbFw9VTxJwGrzsVWM9sMJeJNd2VGGFsiWuqC3YxXoJGKwIo71fgsGm0PYFBzX8eX7pf9GJb1oXUs1q06KPLsMucNytQbL0Z0Qqm1lSPj9MTetkL6KfsC6ZobYhc2quXy9GPmZYj1GoeifeJ3pRAfn6Ypy6jNs4Y5nSEpqN4mRmamAGfYHhSaBrLsDTHCSElUyRMh66XU7hNzpZVC5VnV7VjL7kvWKf7P5hj6t1vugkLGdNX8bgOXHWm6W4YEmxFG4WaNEbGKsv0p4OG0NrduTeZaxNXqV4BpmOdXIq9abPeDPbUZ4NXtohbYegCfxBNttEwcDYSD637jJ2ms6Ta1J2xZPtKnPwAXAtJARc8n5d93TZi7q6WonEDLwWSzeSueYFX8cMhmY6is15pXaOYBbVfSChaLkBRKs6UOqG4jDVabfbdtnyfiDBFI7uhB39FJ6mYrCUUTf2X38J43KyZg87igFR5Rz1t3jH9xlOg1h7P7Ww8wjMJqH3l5J5wU8eH0OogRCvL7fJJg1ugRfMXIGSuEEfbh3hdNY3x197jRqePcdusbfkuJhEpwMvNBZVzLuqxJ9b1BTfYkRJLjOo1aEPIXvZAjvXnefhKGsJGawqjtU7r6MEQ89}   \end{align} for all $\sifpoierjsodfgupoefasdfgjsdfgjsdfgjsdjflxncvzxnvasdjfaopsruosihjsdkghajsdflahgfsif\in C^{\infty}(\TT^3)$ and all $t\in [0,\infty)$.  The pathwise uniqueness is understood in the sense that if both $(u,\tau)$ and $(v, \eta)$ are solutions subject to $(\Omega, \mathcal{F},(\mathcal{F}_t)_{t\geq 0},\mathbb{P})$ and the same Wiener process $\WW$, then we have $ \PP(\uu( t)=v(t),~\forall t \in[0, \tau\wedge \eta])=1 $. \colb \par Note that $\tau$ depends on~$\epsilon_0$ and~$u_0$. Also, observe that the statement applies to small initial data $\uu_0\in L^3(\Omega; L^3(\TT^3))$. In cases where $\uu_0\in L^1(\Omega; L^3(\TT^3))$ or $L^3(\Omega; L^3(\TT^3))$, we may use Markov's inequality in $\omega$, select a large enough portion of the probability space $\Omega_1$, and replace $u_0$ with $u_0 \mathds{1}_{\Omega_1}$, and simply assume 
that \eqref{EQ17} holds, where $\epsilon_0\in(0,1]$ is a sufficiently small constant to be determined.  This is because we can assign $\tau=0$ on~$\Omega_1^{\text{c}}$, and then, after the solution is constructed, redefine $u(0)=u_0$ on~$\Omega_1^{\text{c}}$. \par If the parameter $\epsilon_{\sigma}$ is not  necessarily small, we obtain the global existence and uniqueness on an interval with a deterministic bound. We fix $T>0$ for the next theorem, but $T$ is arbitrary.  \par \cole \begin{Theorem} \label{T02} (Local solution with small initial data) Let $\uu_0\in L^\infty(\Omega; L^3(\TT^3))$, $\nabla\cdot u_0=0$, and $\sifpoierjsodfgupoefasdfgjsdfgjsdfgjsdjflxncvzxnvasdjfaopsruosihjsdfghajsdflahgfsif_{\TT^3} u_0=0$, and assume that the assumptions~\eqref{EQ03}--\eqref{EQ04} hold, where~$\epsilon_{\sigma}>0$. Then there exists  $\epsilon_0\in(0,1]$ such that if    \begin{equation}     \sifpoierjsodfgupoefasdfgjsdfgjsdfgjsdjflxncvzxnvasdjfaopsruosihjsdgghajsdflahgfsif u_0\sifpoierjsodfgupoefasdfgjsdfgjsdfgjsdjflxncvzxnvasdjfaopsruosihjsdgghajsdflahgfsif_{\LLLit}\leq \epsilon_0     ,    \llabel{PoydEH26203mGiJhFnTNCDBYlnPoKO6PuXU3uu9mSg41vmakk0EWUpSUtGBtDe6dKdxZNTFuTi1fMcMhq7POvf0hgHl8fqvI3RK39fn9MaCZgow6e1iXjKC5lHOlpGpkKXdDxtz0HxEfSMjXYL8Fvh7dmJkE8QAKDo1FqMLHOZ2iL9iIm3LKvaYiNK9sb48NxwYNR0nx2t5bWCkx2a31ka8fUIaRGzr7oigRX5sm9PQ7Sr5StZEYmp8VIWShdzgDI9vRF5J81x33nNefjBTVvGPvGsxQhAlGFbe1bQi6JapOJJaceGq1vvb8rF2F3M68eDlzGtXtVm5y14vmwIXa2OGYhxUsXJ0qgl5ZGAtHPZdoDWrSbBSuNKi6KWgr39s9tc7WM4Aws1PzI5cCO7Z8y9lMTLAdwhzMxz9hjlWHjbJ5CqMjhty9lMn4rc76AmkKJimvH9rOtbctCKrsiB04cFVDl1gcvfWh65nxy9ZS4WPyoQByr3vfBkjTZKtEZ7rUfdMicdyCVqnD036HJWMtYfL9fyXxO7mIcFE1OuLQsAQNfWv6kV8Im7Q6GsXNCV0YPoCjnWn6L25qUMTe71vahnHDAoXAbTczhPcfjrjW5M5G0nzNM5TnlJWOPLhM6U2ZFxwpg4NejP8UQ09JX9n7SkEWixERwgyFvttzp4Asv5FTnnMzLVhFUn56tFYCxZ1BzQ3ETfDlCad7VfoMwPmngrDHPfZV0aYkOjrZUw799etoYuBMIC4ovEY8DOLNURVQ5lEQ87-2}   \end{equation} then there exists a stopping time $\tau\in(0,T)$ and a unique solution $(u,\tau)$ of  \eqref{EQ01} on $(\Omega, \mathcal{F},(\mathcal{F}_t)_{t\in[0,T]},\mathbb{P})$ with the initial condition $u_0$ such that   \begin{align}   \begin{split}     \sadklfjsdfgsdfgsdfgsdfgdsfgsdfgadfasdf\biggl[     \sup_{0\leq s\leq \tau}\sifpoierjsodfgupoefasdfgjsdfgjsdfgjsdjflxncvzxnvasdjfaopsruosihjsdgghajsdflahgfsif\uu(s,\cdot)\sifpoierjsodfgupoefasdfgjsdfgjsdfgjsdjflxncvzxnvasdjfaopsruosihjsdgghajsdflahgfsif_3^3     +\sifpoierjsodfgupoefasdfgjsdfgjsdfgjsdjflxncvzxnvasdjfaopsruosihjsdfghajsdflahgfsif_0^{\tau}      \sum_{j}    \sifpoierjsodfgupoefasdfgjsdfgjsdfgjsdjflxncvzxnvasdjfaopsruosihjsdfghajsdflahgfsif_{\TT^3} | \nabla (|\uu_j(s,x)|^{3/2})|^2 \,dx ds     \biggr]     \leq      C\epsilon_0^{3}     ,   \end{split}    \label{EQ15-2}   \end{align}   where the constant $C$ may depend on $T$. \end{Theorem}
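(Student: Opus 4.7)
The plan is to imitate the proof of Theorem~\ref{T01} with modifications that trade the smallness of $\epsilon_\sigma$ for the boundedness of the time interval $[0,T]$. Since $u_0\in L^\infty(\Omega;L^3(\TT^3))$, the assumption $\|u_0\|_{L^\infty_\omega L^3_x}\le \epsilon_0$ controls the datum pathwise, which removes the need for the preliminary Markov-type truncation in $\omega$ used in Theorem~\ref{T01} and makes it legitimate to work a.s.

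First, decompose $u_0$ via Lemma~\ref{L05} into a sequence $\{u_0^{(k)}\}$ summing to $u_0$ in $\mathcal{F}_0$, with $\|u_0^{(k)}\|_3$ decaying geometrically, and set up the inductive sequence of Navier-Stokes-like equations \eqref{EQ32}--\eqref{EQ34} together with the sequence of $L^3$/$L^6$ cutoffs from Section~\ref{sec03}. For each $k$, Lemma~\ref{L06} produces a global-in-time $L^6$ solution $u^{(k)}$ of the $k$th cut-off equation. In the setting of Theorem~\ref{T02}, these $L^6$ solutions exist on the fixed interval $[0,T]$, without needing to exploit smallness of $\epsilon_\sigma$ to tame long-time behavior.

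Next, apply the $L^3$/$L^6$ control lemma from Section~\ref{sec03} on $[0,T]$: an It\^o expansion of $\|u^{(k)}\|_3^3$, combined with \eqref{EQ14}, the BDG inequality, and the assumptions \eqref{EQ03}--\eqref{EQ04}, yields, at a cutoff-defined stopping time $\tau_k$, a bound of the form
\begin{equation*}
\mathbb{E}\Bigl[\sup_{0\le s\le t\wedge \tau_k}\|u^{(k)}(s)\|_3^3+\int_0^{t\wedge\tau_k}\sum_j\int_{\TT^3}\bigl|\nabla(|u^{(k)}_j(s,x)|^{3/2})\bigr|^2\,dx\,ds\Bigr]\le C(T,\epsilon_\sigma)\,\mathbb{E}\bigl[\|u_0^{(k)}\|_3^3\bigr],
\end{equation*}
where the Gr\"onwall factor $C(T,\epsilon_\sigma)$ is finite for each fixed $T$ and $\epsilon_\sigma$ even though $\epsilon_\sigma$ is no longer small. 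Provided the geometric decay of $\|u_0^{(k)}\|_3$ is chosen fast enough relative to $C(T,\epsilon_\sigma)$ (which is only a finite constraint on $\epsilon_0$), the lemma on cutoff-induced boundedness then shows that the event $\{\tau_k<T\}$ has arbitrarily small probability, so the cutoffs are not activated with high probability.

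Finally, define $\tau$ as the minimum of $T$ and the first time the partial sum exits an $L^3$-ball of radius comparable to $\epsilon_0$, and set $u=\sum_k u^{(k)}$. The pathwise bound $\|u_0\|_3\le \epsilon_0$ a.s.\ together with $C([0,T];L^3)$-continuity of each $u^{(k)}$ guarantees $\tau>0$ a.s., and summing the $k$-wise estimates gives~\eqref{EQ15-2} with a $T$-dependent constant. Pathwise uniqueness follows by writing the equation for the difference of two solutions on $[0,\tau\wedge\eta]$ and invoking \eqref{EQ30} in a Gr\"onwall estimate. The main obstacle is quantitative bookkeeping of the interplay between $C(T,\epsilon_\sigma)$ and the geometric decay rate in Lemma~\ref{L05}: because $\epsilon_\sigma$ is no longer assumed small, the Lipschitz/growth constants from \eqref{EQ03}--\eqref{EQ30} enter every iteration and must be absorbed by choosing $\epsilon_0$ small depending on both $T$ and $\epsilon_\sigma$, rather than by a universal smallness of $\epsilon_\sigma$ as in Theorem~\ref{T01}.
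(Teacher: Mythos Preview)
Your proposal is correct and follows essentially the same route as the paper: the paper explicitly states that the proof of Theorem~\ref{T02} is a straightforward modification of that of Theorem~\ref{T01}, and Remarks~\ref{R01}--\ref{R03} spell out precisely the changes you describe (replacing the absorption of the $\epsilon_\sigma$-term by a Gr\"onwall argument producing a $T$-dependent constant, obtaining $\mathbb{P}(\tau<T)\le p_0$ in place of $\mathbb{P}(\tau<\infty)\le p_0$, and extending uniqueness by finitely many steps on $[0,T]$). One minor imprecision: your final description of $\tau$ as ``the first time the partial sum exits an $L^3$-ball'' is looser than the paper's construction, where $\tau$ is the infimum over $k$ of the joint $L^3$ and $L^6$ hitting times $\tau_k\wedge\rho_k$ for each individual $v^{(k)}$; this finer definition is what guarantees all cutoffs in \eqref{EQ34} are inactive on $[0,\tau)$, so that the $v^{(k)}$ actually solve the untruncated system~\eqref{EQ32}.
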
 \colb \par The proof of Theorem~\ref{T02} is a modification of the one for Theorem~\ref{T01}. Since the modification is straight-forward, we omit the detailed proof. \colb \par \subsection{Auxiliary results} In \cite{KXZ}, we investigated the stochastic heat equation  \begin{align}   \begin{split}     \partial_t\uu( t,x)     &=\Delta \uu( t,x) + \nabla \cdot f( t,x) + g( t,x)\dot{\WW}(t),     \\     \uu( 0,x)&= \uu_0 ( x) \Pas     ,   \end{split}   \label{EQ18}  \end{align} where  $u\colon \colr \Omega\times\colb [0,T]\times{\mathcal{D}}\to \mathbb{R}$, and $\mathcal{D}=\mathbb{T}^{3}$. We proved in \cite{KXZ} that \eqref{EQ18} has a unique strong solution up to any deterministic time $T$ and provided an energy estimate of the solution. (See \cite{KXZ} or \cite{KX} for the definition of the strong solution.) The next lemma is a natural consequence of \cite[Theorem~4.1]{KXZ} and \cite[Theorem~3.1]{KWX}. \par \cole \begin{Lemma}   \label{L01}   Let $0<T<\infty$, $2\leq p<\infty$, and ~$3p/(p+1)\leq q\leq p$. Suppose that $u_0\in L^p(\Omega, L^p(\mathcal{D}))$,    $f\in L^p(\Omega\times[0,T], L^{q}(\mathcal{D}))$,    and $g\in L^p(\Omega\times[0,T], \mathbb{L}^p(\mathcal{D}))$. Then there exists a unique global solution $\uu$ of~\eqref{EQ18} in $L^p(\Omega; C([0,T], L^p(\mathcal{D})))$ such that   \begin{align}
    \begin{split}       &\sadklfjsdfgsdfgsdfgsdfgdsfgsdfgadfasdf\biggl[\sup_{0\leq t\leq T}\sifpoierjsodfgupoefasdfgjsdfgjsdfgjsdjflxncvzxnvasdjfaopsruosihjsdgghajsdflahgfsif\uu(t,\cdot)\sifpoierjsodfgupoefasdfgjsdfgjsdfgjsdjflxncvzxnvasdjfaopsruosihjsdgghajsdflahgfsif_p^p+\sifpoierjsodfgupoefasdfgjsdfgjsdfgjsdjflxncvzxnvasdjfaopsruosihjsdfghajsdflahgfsif_0^{T}\sum_{j=1}^{3} \sifpoierjsodfgupoefasdfgjsdfgjsdfgjsdjflxncvzxnvasdjfaopsruosihjsdfghajsdflahgfsif_{\mathcal{D}} | \nabla (|u(t,x)|^{p/2})|^2 \,dx dt\biggr]       \\&\indeq       \leq C       \sadklfjsdfgsdfgsdfgsdfgdsfgsdfgadfasdf\biggl[       \sifpoierjsodfgupoefasdfgjsdfgjsdfgjsdjflxncvzxnvasdjfaopsruosihjsdgghajsdflahgfsif\uu_0\sifpoierjsodfgupoefasdfgjsdfgjsdfgjsdjflxncvzxnvasdjfaopsruosihjsdgghajsdflahgfsif_p^p       +\sifpoierjsodfgupoefasdfgjsdfgjsdfgjsdjflxncvzxnvasdjfaopsruosihjsdfghajsdflahgfsif_0^{T}\sifpoierjsodfgupoefasdfgjsdfgjsdfgjsdjflxncvzxnvasdjfaopsruosihjsdgghajsdflahgfsif f(s,\cdot)\sifpoierjsodfgupoefasdfgjsdfgjsdfgjsdjflxncvzxnvasdjfaopsruosihjsdgghajsdflahgfsif_{q}^p\,ds       +\sifpoierjsodfgupoefasdfgjsdfgjsdfgjsdjflxncvzxnvasdjfaopsruosihjsdfghajsdflahgfsif_0^{T} \sifpoierjsodfgupoefasdfgjsdfgjsdfgjsdjflxncvzxnvasdjfaopsruosihjsdgghajsdflahgfsif g(s, \cdot)\sifpoierjsodfgupoefasdfgjsdfgjsdfgjsdjflxncvzxnvasdjfaopsruosihjsdgghajsdflahgfsif_{\mathbb{L}^{p}}^{p}  \,ds       \biggr]       ,     \end{split}     \label{EQ19}   \end{align} \end{Lemma}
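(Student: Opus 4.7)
The plan is to derive the statement as a direct corollary of the two cited results: \cite[Theorem~4.1]{KXZ} handles the multiplicative noise $g\dot W$ for the stochastic heat equation, and \cite[Theorem~3.1]{KWX} treats the divergence-form forcing $\nabla\cdot f$ at the critical scaling $q\ge 3p/(p+1)$. The argument combines the two energy estimates via It\^o's formula. I would proceed in three steps.

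\emph{Step 1.} Since the problem is linear, I would first establish existence and pathwise uniqueness in $L^p(\Omega;C([0,T],L^p(\mathcal D)))$ from the mild formulation
\[
u(t)=e^{t\Delta}u_0+\int_0^t e^{(t-s)\Delta}\nabla\cdot f(s)\,ds+\int_0^t e^{(t-s)\Delta}g(s)\,dW(s),
\]
using standard heat-semigroup $L^p$-bounds together with the BDG inequality. To justify the It\^o computation in Step~2 rigorously, I would first smooth the data $(u_0,f,g)$ by spatial mollification and pass to the limit at the end using the a priori bound.

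\emph{Step 2.} I would apply It\^o's formula to the functional $v\mapsto\int_{\mathcal D}|v|^p\,dx$. Using the pointwise identity $|\nabla(|u|^{p/2})|^2=(p/2)^2|u|^{p-2}|\nabla u|^2$, an integration by parts on the parabolic term yields
\begin{align*}
\|u(t)\|_p^p+c_p\int_0^t\bigl\|\nabla(|u|^{p/2})\bigr\|_2^2\,ds
&=\|u_0\|_p^p-p(p-1)\int_0^t\int_{\mathcal D}|u|^{p-2}\nabla u\cdot f\,dx\,ds\\
&\quad+\tfrac{p(p-1)}{2}\int_0^t\int_{\mathcal D}|u|^{p-2}\|g\|_{l^2(\mathcal H)}^2\,dx\,ds+M(t),
\end{align*}
with $c_p=4(p-1)/p$ and $M$ a continuous local martingale.

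\emph{Step 3.} Closing the estimate amounts to absorbing the three remaining contributions. For the forcing, H\"older gives
\[
\Bigl|\int_{\mathcal D}|u|^{p-2}\nabla u\cdot f\,dx\Bigr|\le C\bigl\|\nabla(|u|^{p/2})\bigr\|_2\,\bigl\||u|^{p/2-1}f\bigr\|_2,
\]
and a further H\"older split places $\||u|^{p/2-1}f\|_2\le\|u\|_s^{p/2-1}\|f\|_q$ with $s=(p-2)q/(q-2)$. The hypothesis $q\ge 3p/(p+1)$ is exactly the condition that $s\in[p,3p]$, so $L^p$--$L^{3p}$ interpolation together with the critical embedding \eqref{EQ14}, written as $\|u\|_{3p}^p\le C_p\|\nabla(|u|^{p/2})\|_2^2$, and two successive Young inequalities absorb the right-hand side into $(c_p/4)\|\nabla(|u|^{p/2})\|_2^2+C(\|u\|_p^p+\|f\|_q^p)$; the endpoint $q=3p/(p+1)$ is the value at which the scaling is saturated. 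The It\^o correction is controlled by H\"older and Young as $\int|u|^{p-2}\|g\|_{l^2}^2\,dx\le\varepsilon\|u\|_p^p+C_\varepsilon\|g\|_{\mathbb L^p}^p$. For the martingale $M$, BDG applied after taking the supremum in time, followed by Young, absorbs a small multiple of $\EE\sup_{s\le t}\|u(s)\|_p^p$ on the left. A single Gr\"onwall step and a passage to the limit from the mollified data yield \eqref{EQ19}.

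The principal technical difficulty is justifying the It\^o expansion of $\|u\|_p^p$ at the stated low regularity, which is precisely why Step~1 mollifies first and passes to the limit; the scaling-critical exponent $q=3p/(p+1)$ is the main analytic point and is handled exactly as in \cite[Theorem~3.1]{KWX}, with \cite[Theorem~4.1]{KXZ} supplying the noise bookkeeping.
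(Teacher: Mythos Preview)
The paper does not give its own proof of Lemma~\ref{L01}; it merely records the lemma as ``a natural consequence of \cite[Theorem~4.1]{KXZ} and \cite[Theorem~3.1]{KWX}.'' Your proposal is a correct reconstruction of the argument behind those references, and it matches in structure the proof the paper \emph{does} give for the closely related Lemma~\ref{L02}: mollify the data, apply It\^o's formula to $\Vert u\Vert_p^p$, integrate the Laplacian term by parts to produce the dissipation $\Vert\nabla(|u|^{p/2})\Vert_2^2$, control the divergence-form forcing by H\"older and interpolation between $L^p$ and $L^{3p}$ (this is exactly where the threshold $q\ge 3p/(p+1)$ enters), handle the It\^o correction and martingale via H\"older, BDG, and Young, and close with Gr\"onwall before removing the mollification.

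One small point worth noting: your appeal to \eqref{EQ14} in the form $\Vert u\Vert_{3p}^p\le C_p\Vert\nabla(|u|^{p/2})\Vert_2^2$ requires $u$ to have zero spatial average, which Lemma~\ref{L01} does not assume. In the general case you should use the full Gagliardo--Nirenberg inequality, which produces an additional lower-order term $\Vert u\Vert_p^p$ on the right; this is harmless since it is absorbed by the Gr\"onwall step anyway (and is consistent with the $T$-dependence of the constant in \eqref{EQ19}, in contrast to Lemma~\ref{L02}).
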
 \colb \par We also need the following strengthening of~\eqref{EQ19}. \par \cole \begin{Lemma} \label{L02} Let $0<T<\infty$ and $2\leq p<\infty$. Suppose that $u_0\in L^p(\Omega, L^p(\mathcal{D}))$,    $f\in L^p(\Omega\times[0,T], L^{p}(\mathcal{D}))$,  and $g\in L^p(\Omega\times[0,T], \mathbb{L}^p(\mathcal{D}))$. Then there exists a unique global solution $\uu$ of~\eqref{EQ18} in $L^p(\Omega; C([0,T], L^p(\mathcal{D})))$ such that    \begin{align}   \begin{split}   &\sadklfjsdfgsdfgsdfgsdfgdsfgsdfgadfasdf\biggl[\sup_{0\leq t\leq T}\sifpoierjsodfgupoefasdfgjsdfgjsdfgjsdjflxncvzxnvasdjfaopsruosihjsdgghajsdflahgfsif\uu(t,\cdot)\sifpoierjsodfgupoefasdfgjsdfgjsdfgjsdjflxncvzxnvasdjfaopsruosihjsdgghajsdflahgfsif_p^p-\sifpoierjsodfgupoefasdfgjsdfgjsdfgjsdjflxncvzxnvasdjfaopsruosihjsdgghajsdflahgfsif\uu_0\sifpoierjsodfgupoefasdfgjsdfgjsdfgjsdjflxncvzxnvasdjfaopsruosihjsdgghajsdflahgfsif_p^p+\sifpoierjsodfgupoefasdfgjsdfgjsdfgjsdjflxncvzxnvasdjfaopsruosihjsdfghajsdflahgfsif_0^{T} \sifpoierjsodfgupoefasdfgjsdfgjsdfgjsdjflxncvzxnvasdjfaopsruosihjsdfghajsdflahgfsif_{\mathcal{D}} | \nabla (|\uu(t,x)|^{p/2})|^2 \,dx dt\biggr]   \\&\indeq   \leq C   \sadklfjsdfgsdfgsdfgsdfgdsfgsdfgadfasdf\biggl[   {   	\sifpoierjsodfgupoefasdfgjsdfgjsdfgjsdjflxncvzxnvasdjfaopsruosihjsdfghajsdflahgfsif_0^{T} \sifpoierjsodfgupoefasdfgjsdfgjsdfgjsdjflxncvzxnvasdjfaopsruosihjsdfghajsdflahgfsif_{\mathcal{D}}|f(t,x)|^2 |u(t,x)|^{p-2}\,dxdt   }   {   	+ \sifpoierjsodfgupoefasdfgjsdfgjsdfgjsdjflxncvzxnvasdjfaopsruosihjsdfghajsdflahgfsif_0^{T}\sifpoierjsodfgupoefasdfgjsdfgjsdfgjsdjflxncvzxnvasdjfaopsruosihjsdfghajsdflahgfsif_{\mathcal{D}} |\uu(t)|^{p-2} \sifpoierjsodfgupoefasdfgjsdfgjsdfgjsdjflxncvzxnvasdjfaopsruosihjsdgghajsdflahgfsif g(t,x)\sifpoierjsodfgupoefasdfgjsdfgjsdfgjsdjflxncvzxnvasdjfaopsruosihjsdgghajsdflahgfsif_{l^2}^2\,dxdt   }   \biggr],   \end{split}   \label{EQ20}   \end{align} where $C$ is a positive constant that depends only on~$p$. \end{Lemma}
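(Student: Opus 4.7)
The existence and uniqueness of a global solution $u\in L^p(\Omega;C([0,T],L^p(\mathcal{D})))$ follow from Lemma~\ref{L01} applied with $q=p$ (the choice $q=p$ is admissible since $3p/(p+1)\le p$ whenever $p\ge 2$). The task is thus to establish the sharpened inequality~\eqref{EQ20}, whose essential feature is that $|u|^{p-2}$ is kept \emph{inside} each integral on the right-hand side rather than absorbed into an $L^p$ norm.

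The plan is to apply It\^o's formula to the functional $\Phi(u)=\|u\|_p^p$. Rigorous justification in this low-regularity setting uses the smoothing-plus-Galerkin scheme from~\cite[Theorem~4.1]{KXZ} and~\cite[Theorem~3.1]{KWX}; after passing to the limit via Lemma~\ref{L01}, one obtains, a.s. and for every $t\in[0,T]$,
\[
\|u(t)\|_p^p-\|u_0\|_p^p = -p(p-1)\!\int_0^t\!\!\int_{\mathcal{D}}|u|^{p-2}|\nabla u|^2\,dx\,ds -p\!\int_0^t\!\!\int_{\mathcal{D}}\nabla(|u|^{p-2}u)\cdot f\,dx\,ds +\frac{p(p-1)}{2}\!\int_0^t\!\!\int_{\mathcal{D}}|u|^{p-2}\|g\|_{l^2}^2\,dx\,ds+ M(t),
\]
where $M(t)=p\int_0^t\!\int_{\mathcal{D}}|u|^{p-2}u\cdot g\,dx\,dW(s)$. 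The algebraic identity $p(p-1)|u|^{p-2}|\nabla u|^2=\tfrac{4(p-1)}{p}|\nabla(|u|^{p/2})|^2$ converts the dissipation into the form that appears on the left-hand side of~\eqref{EQ20}.

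For the $f$-drift, I would expand $\nabla(|u|^{p-2}u)=(p-1)|u|^{p-2}\nabla u$ and then use Cauchy-Schwarz and Young's inequality with a small parameter to absorb half of the dissipation, leaving a remainder controlled by $C\int|u|^{p-2}|f|^2\,dx$. The It\^o correction is already in the target form. The critical step is the martingale term: taking $\sup_{s\le t}$ and expectation, the Burkholder-Davis-Gundy inequality gives
\[
\EE\sup_{s\le t}|M(s)|\le C\,\EE\Bigl(\int_0^t\sum_k\Bigl(\int|u|^{p-2}u\cdot(ge_k)\,dx\Bigr)^2 ds\Bigr)^{1/2}.
\]
The key pointwise split $|u|^{p-2}u\cdot ge_k=(|u|^{p/2-1}u)\cdot(|u|^{p/2-1}ge_k)$, combined with Cauchy-Schwarz in space and summation in $k$, yields
\[
\sum_k\Bigl(\int|u|^{p-2}u\cdot ge_k\,dx\Bigr)^2\le\|u\|_p^p\int|u|^{p-2}\|g\|_{l^2}^2\,dx.
\]
Extracting $\sup_{s\le t}\|u(s)\|_p^{p/2}$ out of the time integral and applying Young's inequality then absorbs $\tfrac12\,\EE\sup_{s\le t}\|u(s)\|_p^p$ back on the left and produces exactly the desired term $\EE\int_0^t\!\int|u|^{p-2}\|g\|_{l^2}^2\,dx\,ds$ on the right, yielding~\eqref{EQ20}.

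The main obstacle is the rigorous application of It\^o's formula to $\Phi(u)=\|u\|_p^p$ when $u$ has only the regularity supplied by Lemma~\ref{L01}; once this is settled by the approximation scheme from~\cite{KXZ,KWX}, the remaining work is the routine Young-BDG absorption sketched above, and no new functional-analytic input is required.
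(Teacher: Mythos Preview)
Your proposal is correct and follows essentially the same route as the paper: both apply It\^o's formula to $\|u\|_p^p$ on mollified/approximate solutions, treat the $f$-drift by integration by parts and a Young-inequality absorption into the dissipation (this is exactly the paper's handling of $I_1$), and control the martingale and It\^o correction via the BDG-plus-splitting argument you describe (which the paper delegates to \cite[p.~170 and p.~172]{KXZ}). The only cosmetic difference is that the paper carries out all estimates on the smooth approximant $u^\epsilon$ and passes to the limit in the final inequality, whereas you phrase it as first justifying the It\^o expansion for $u$ itself via approximation and then estimating; these are equivalent.
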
 \colb \par We emphasize that in Lemma~\ref{L02} the constant $C$ does not depend on~$T$. \par \begin{proof}[Proof of Lemma~\ref{L02}]  We follow the ideas in \cite[Theorem~4.1]{KXZ}, smoothing $u_0$, $f$, and $g$ by standard mollifiers and performing the estimates on~$u^{\epsilon}$. Note that the existence of $u^{\epsilon}$ is guaranteed by \cite[Theorem 4.1.4]{R} for every $\epsilon>0$ and $u^{\epsilon}$ has a regular modification. \par As in~\cite[Theorem~4.1]{KXZ}, the estimate \eqref{EQ20} is derived from the It\^o expansion for~$|u^{\epsilon}|^p$, which asserts  \begin{align}   \begin{split}    &\sifpoierjsodfgupoefasdfgjsdfgjsdfgjsdjflxncvzxnvasdjfaopsruosihjsdgghajsdflahgfsif\ueps(t)\sifpoierjsodfgupoefasdfgjsdfgjsdfgjsdjflxncvzxnvasdjfaopsruosihjsdgghajsdflahgfsif_{p}^{p}    - \sifpoierjsodfgupoefasdfgjsdfgjsdfgjsdjflxncvzxnvasdjfaopsruosihjsdgghajsdflahgfsif\ueps_0\sifpoierjsodfgupoefasdfgjsdfgjsdfgjsdjflxncvzxnvasdjfaopsruosihjsdgghajsdflahgfsif_{p}^{p}  +\frac{4(p-1)}{p}\sifpoierjsodfgupoefasdfgjsdfgjsdfgjsdjflxncvzxnvasdjfaopsruosihjsdfghajsdflahgfsif_0^t \sifpoierjsodfgupoefasdfgjsdfgjsdfgjsdjflxncvzxnvasdjfaopsruosihjsdfghajsdflahgfsif_{\TT^\dd} | \nabla (|\ueps(r)|^{p/2})|^2 \,dx dr \\&\indeq \leq p\sifpoierjsodfgupoefasdfgjsdfgjsdfgjsdjflxncvzxnvasdjfaopsruosihjsdfghajsdflahgfsif_0^t\left|\sifpoierjsodfgupoefasdfgjsdfgjsdfgjsdjflxncvzxnvasdjfaopsruosihjsdfghajsdflahgfsif_{\TT^\dd} |\ueps(r)|^{p-2}\ueps(r) \partial_{j} f^{\epsilon}_j(r)\,dx\right| \,dr \\&\indeq\indeq +\frac{p(p-1)}{2}\sifpoierjsodfgupoefasdfgjsdfgjsdfgjsdjflxncvzxnvasdjfaopsruosihjsdfghajsdflahgfsif_0^t\sifpoierjsodfgupoefasdfgjsdfgjsdfgjsdjflxncvzxnvasdjfaopsruosihjsdfghajsdflahgfsif_{\TT^\dd} |\ueps(r)|^{p-2}\sifpoierjsodfgupoefasdfgjsdfgjsdfgjsdjflxncvzxnvasdjfaopsruosihjsdgghajsdflahgfsif g^{\epsilon}(r)\sifpoierjsodfgupoefasdfgjsdfgjsdfgjsdjflxncvzxnvasdjfaopsruosihjsdgghajsdflahgfsif_{l^2}^2\,dx dr + p\left|\sifpoierjsodfgupoefasdfgjsdfgjsdfgjsdjflxncvzxnvasdjfaopsruosihjsdfghajsdflahgfsif_0^t\sifpoierjsodfgupoefasdfgjsdfgjsdfgjsdjflxncvzxnvasdjfaopsruosihjsdfghajsdflahgfsif_{\TT^\dd} |\ueps(r)|^{p-2}\ueps(r) g^{\epsilon}(r) \,dxd\WW_r\right| \\&\indeq =   I_1 + I_2 + I_3 \commaone t\in [0,T].     \end{split} \llabel{ti1iSNZAdwWr6Q8oPFfae5lAR9gDRSiHOeJOWwxLv20GoMt2Hz7YcalyPZxeRuFM07gaV9UIz7S43k5TrZiDMt7pENCYiuHL7gac7GqyN6Z1ux56YZh2dyJVx9MeUOMWBQfl0EmIc5Zryfy3irahCy9PiMJ7ofoOpdennsLixZxJtCjC9M71vO0fxiR51mFIBQRo1oWIq3gDPstD2ntfoX7YUoS5kGuVIGMcfHZe37ZoGA1dDmkXO2KYRLpJjIIomM6Nuu8O0jO5NabUbRnZn15khG94S21V4Ip457ooaiPu2jhIzosWFDuO5HdGrdjvvtTLBjovLLiCo6L5LwaPmvD6Zpal69Ljn11reT2CPmvjrL3xHmDYKuv5TnpC1fMoURRToLoilk0FEghakm5M9cOIPdQlGDLnXerCykJC10FHhvvnYaTGuqUrfTQPvwEqiHOvOhD6AnXuvGlzVAvpzdOk36ymyUoFbAcAABItOes52Vqd0Yc7U2gBt0WfFVQZhrJHrlBLdCx8IodWpAlDS8CHBrNLzxWp6ypjuwWmgXtoy1vPbrauHyMNbkUrZD6Ee2fzIDtkZEtiLmgre1woDjuLBBSdasYVcFUhyViCxB15yLtqlqoUhgL3bZNYVkorzwa3650qWhF22epiXcAjA4ZV4bcXxuB3NQNp0GxW2Vs1zjtqe2pLEBiS30E0NKHgYN50vXaK6pNpwdBX2Yv7V0UddTcPidRNNCLG47Fc3PLBxK3Bex1XzyXcj0Z6aJk0HKEQ103} \end{align} Note that in our model the drift function is $\partial_{j}f_j^{\epsilon}$ instead of~$f^{\epsilon}$. We obtain the last term in \eqref{EQ20} from $I_2$ and $I_3$, which was already explained in \cite[p.~170 and the bottom of~p.~172]{KXZ}. However, we handle the term $I_1$ differently from \cite[p.~170]{KXZ}. Using integration by parts, we have for $p\in[2,\infty)$,   \begin{align}   \begin{split}    I_1     &=     p(p-1)     \sifpoierjsodfgupoefasdfgjsdfgjsdfgjsdjflxncvzxnvasdjfaopsruosihjsdfghajsdflahgfsif_{0}^{t}     \left|\sifpoierjsodfgupoefasdfgjsdfgjsdfgjsdjflxncvzxnvasdjfaopsruosihjsdfghajsdflahgfsif_{\mathcal{D}} |u^{\epsilon}(r)|^{p-2}\partial_{j}u^{\epsilon}(r) f^{\epsilon}_j(r)\,dx\right| \,dr    \\&    = 2(p-1)      \left|\sifpoierjsodfgupoefasdfgjsdfgjsdfgjsdjflxncvzxnvasdjfaopsruosihjsdfghajsdflahgfsif_{\mathcal{D}} \frac{|u^{\epsilon}|^{p/2}}{u^{\epsilon}}      \partial_{j}(|u^{\epsilon}|^{p/2}) f^{\epsilon}_j          \,dx\right| \,dr   \\&   \leq   \delta\sifpoierjsodfgupoefasdfgjsdfgjsdfgjsdjflxncvzxnvasdjfaopsruosihjsdfghajsdflahgfsif_{0}^{t}\sifpoierjsodfgupoefasdfgjsdfgjsdfgjsdjflxncvzxnvasdjfaopsruosihjsdfghajsdflahgfsif_{\mathcal{D}}   |\nabla(|u^{\epsilon}|^{p/2})|^2   + 2 C_{\delta,p}   \sifpoierjsodfgupoefasdfgjsdfgjsdfgjsdjflxncvzxnvasdjfaopsruosihjsdfghajsdflahgfsif_{0}^{t}\sifpoierjsodfgupoefasdfgjsdfgjsdfgjsdjflxncvzxnvasdjfaopsruosihjsdfghajsdflahgfsif_{\mathcal{D}}   |f^{\epsilon}_j|^2   |u^{\epsilon}|^{p-2}   \,dx dr   \comma t\in(0,T]   ,   \end{split}    \llabel{uQnwdDhPQ1QrwA05v9c3pnzttztx2IirWCZBoS5xlOKCiD3WFh4dvCLQANAQJGgyvODNTDFKjMc0RJPm4HUSQkLnTQ4Y6CCMvNjARZblir7RFsINzHiJlcgfxSCHtsZOG1VuOzk5G1CLtmRYIeD35BBuxZJdYLOCwS9lokSNasDLj5h8yniu7hu3cdizYh1PdwEl3m8XtyXQRCAbweaLiN8qA9N6DREwy6gZexsA4fGEKHKQPPPKMbksY1jM4h3JjgSUOnep1wRqNGAgrL4c18Wv4kchDgRx7GjjIBzcKQVf7gATrZxOy6FF7y93iuuAQt9TKRxS5GOTFGx4Xx1U3R4s7U1mpabpDHgkicxaCjkhnobr0p4codyxTCkVj8tW4iP2OhTRF6kU2k2ooZJFsqY4BFSNI3uW2fjOMFf7xJveilbUVTArCTvqWLivbRpg2wpAJOnlRUEPKhj9hdGM0MigcqQwkyunBJrTLDcPgnOSCHOsSgQsR35MB7BgkPk6nJh01PCxdDsw514O648VD8iJ54FW6rs6SyqGzMKfXopoe4eo52UNB4Q8f8NUz8u2nGOAXHWgKtGAtGGJsbmz2qjvSvGBu5e4JgLAqrmgMmS08ZFsxQm28M3z4Ho1xxjj8UkbMbm8M0cLPL5TS2kIQjZKb9QUx2Ui5Aflw1SLDGIuWUdCPjywVVM2ct8cmgOBS7dQViXR8Fbta1mtEFjTO0kowcK2d6MZiW8PrKPI1sXWJNBcREVY4H5QQGHbplPbwdEQ22}   \end{align} for $\delta>0$. The first term can be absorbed into the dissipative term provided that $\delta>0$ is sufficiently small. Hence, we obtain~\eqref{EQ20} for $u^{\epsilon}$ and then for the solution $u$ of \eqref{EQ18} by passing to the weak limit.  \end{proof} \par Finally, we also recall~\cite[Lemma~3.2]{KWX}. \par \cole \begin{Lemma}   \label{L03}   Let $T>0$ and $p\geq 2$, and let $\{u_n\}_{n\in\NNp}$ be a sequence of scalar-valued processes such that   $\{\nabla (|\uu_{n}|^{p/2})\}_{n\in\NNp}$ is bounded in $L^2(\Omega\times[0,T], L^2)$   and $\uu_{n}$ converges to $u$ in $L^p(\Omega, L^{\infty}([0,T], L^p))$ as $n\to\infty$. Then,   \begin{align}
    \sadklfjsdfgsdfgsdfgsdfgdsfgsdfgadfasdf\biggl[\sifpoierjsodfgupoefasdfgjsdfgjsdfgjsdjflxncvzxnvasdjfaopsruosihjsdfghajsdflahgfsif_0^{T} \sifpoierjsodfgupoefasdfgjsdfgjsdfgjsdjflxncvzxnvasdjfaopsruosihjsdfghajsdflahgfsif_{\mathcal{D}} |\nabla (|u|^{p/2})|^2 \,dx dt\biggr]     \leq     \liminf_{n\to \infty}     \sadklfjsdfgsdfgsdfgsdfgdsfgsdfgadfasdf\biggl[\sifpoierjsodfgupoefasdfgjsdfgjsdfgjsdjflxncvzxnvasdjfaopsruosihjsdfghajsdflahgfsif_0^{T} \sifpoierjsodfgupoefasdfgjsdfgjsdfgjsdjflxncvzxnvasdjfaopsruosihjsdfghajsdflahgfsif_{\mathcal{D}} | \nabla (|\uu_{n}|^{p/2})|^2 \,dx dt\biggr]     .     \llabel{TxpOI5OQZAKyiix7QeyYI91Ea16rKXKL2ifQXQPdPNL6EJiHcKrBs2qGtQbaqedOjLixjGiNWr1PbYSZeSxxFinaK9EkiCHV2a13f7G3G3oDKK0ibKVy453E2nFQS8Hnqg0E32ADddEVnmJ7HBc1t2K2ihCzZuy9kpsHn8KouARkvsHKPy8YodOOqBihF1Z3CvUFhmjgBmuZq7ggWLg5dQB1kpFxkk35GFodk00YD13qIqqbLwyQCcyZRwHAfp79oimtCc5CV8cEuwUw7k8Q7nCqWkMgYrtVRIySMtZUGCHXV9mr9GHZol0VEeIjQvwgw17pDhXJSFUcYbqUgnGV8IFWbS1GXaz0ZTt81w7EnIhFF72v2PkWOXlkrw6IPu5679vcW1f6z99lM2LI1Y6Naaxfl18gT0gDptVlCN4jfGSbCro5Dv78CxaukYiUIWWyYDRw8z7KjPx7ChC7zJvb1b0rFd7nMxk091wHvy4u5vLLsJ8NmAkWtxuf4P5NwP23b06sFNQ6xgDhuRGbK7j2O4gy4p4BLtop3h2kfyI9wO4AaEWb36YyHYiI1S3COJ7aN1r0sQOrCAC4vL7yrCGkIRlNuGbOuuk1awLDK2zlKa40hyJnDV4iFxsqO001rqCeOAO2es7DRaCpUG54F2i97xSQrcbPZ6K8Kudn9e6SYo396Fr8LUxyXOjdFsMrl54EhT8vrxxF2phKPbszrlpMAubERMGQAaCBu2LqwGasprfIZOiKVVbuVae6abaufy9KcFEQ23}   \end{align} \end{Lemma}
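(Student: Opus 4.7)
The plan is to combine strong convergence of the nonlinear quantity $|u_n|^{p/2}$ in $L^2$ with the weak lower semicontinuity of the $L^2$-norm of a gradient. First, I would show that $|u_n|^{p/2} \to |u|^{p/2}$ strongly in $L^2(\Omega\times[0,T]\times\mathcal{D})$. The elementary inequality $\bigl||a|^{p/2}-|b|^{p/2}\bigr|\leq C(|a|^{p/2-1}+|b|^{p/2-1})|a-b|$ for $p\geq 2$, combined with H\"older's inequality in $x$ with exponents $p/(p-2)$ and $p/2$, gives
\begin{equation*}
\bigl\| |u_n|^{p/2}-|u|^{p/2}\bigr\|_{L^2(\mathcal{D})}^2 \leq C\bigl(\|u_n\|_{L^p}^{p-2}+\|u\|_{L^p}^{p-2}\bigr)\|u_n-u\|_{L^p}^2.
\end{equation*}
Integrating in $t\in[0,T]$, taking expectation, and using H\"older in $\omega$ with the same exponents shows that the left-hand side is controlled by $T$ times $(\EE\sup_t\|u_n\|_{L^p}^p+\EE\sup_t\|u\|_{L^p}^p)^{(p-2)/p}(\EE\sup_t\|u_n-u\|_{L^p}^p)^{2/p}$, which vanishes as $n\to\infty$ by the hypothesized convergence in $L^p(\Omega;L^{\infty}([0,T],L^p))$.

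Second, I would extract a subsequence $u_{n_k}$ along which $\EE\int_0^T\int_{\mathcal{D}}|\nabla(|u_{n_k}|^{p/2})|^2\,dx\,dt$ achieves the $\liminf$ on the right-hand side. Since this sequence is bounded in the Hilbert space $L^2(\Omega\times[0,T]\times\mathcal{D})$, a further subsequence (still denoted $n_k$) gives $\nabla(|u_{n_k}|^{p/2})\rightharpoonup g$ weakly in $L^2$ for some $g$.

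Third, identifying $g=\nabla(|u|^{p/2})$ in the distributional sense is immediate from the strong convergence of $|u_n|^{p/2}$: for any smooth test function $\varphi$ on $\Omega\times[0,T]\times\mathcal{D}$,
\begin{equation*}
\EE\!\int_0^T\!\!\int_{\mathcal{D}} g\,\varphi\,dx\,dt = \lim_k\EE\!\int_0^T\!\!\int_{\mathcal{D}}\nabla(|u_{n_k}|^{p/2})\,\varphi\,dx\,dt = -\lim_k\EE\!\int_0^T\!\!\int_{\mathcal{D}}|u_{n_k}|^{p/2}\nabla\varphi\,dx\,dt,
\end{equation*}
and the strong $L^2$ convergence allows replacing $|u_{n_k}|^{p/2}$ by $|u|^{p/2}$ in the limit, which identifies $g=\nabla(|u|^{p/2})$ as distributions. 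The desired inequality then follows from the weak lower semicontinuity of the $L^2$-norm, namely $\|g\|_{L^2}^2\leq \liminf_k\|\nabla(|u_{n_k}|^{p/2})\|_{L^2}^2$, together with the choice of the subsequence.

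The only mildly delicate point is the first step, upgrading $L^p$-convergence of $u_n$ to $L^2$-convergence of $|u_n|^{p/2}$, since it requires using the uniform boundedness of $\EE\sup_t\|u_n\|_{L^p}^p$; this boundedness is a consequence of the convergence in $L^p(\Omega;L^{\infty}([0,T],L^p))$. The rest of the argument is a standard weak-compactness plus lower-semicontinuity routine, so I expect no significant obstacle.
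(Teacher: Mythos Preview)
The paper does not supply its own proof of this lemma; it is merely recalled from \cite[Lemma~3.2]{KWX}. Your argument is correct and is precisely the standard route one expects: strong $L^2$-convergence of $|u_n|^{p/2}$ (via the pointwise inequality and H\"older), weak $L^2$-compactness of the gradients along a subsequence realizing the $\liminf$, identification of the weak limit as $\nabla(|u|^{p/2})$, and finally weak lower semicontinuity of the Hilbert norm.

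One cosmetic remark: in the third step you speak of a ``smooth test function $\varphi$ on $\Omega\times[0,T]\times\mathcal{D}$,'' but smoothness in the probability variable $\omega$ is not meaningful. What the integration-by-parts step actually needs is to test against products $\psi(\omega,t)\phi(x)$ with $\psi$ bounded measurable and $\phi\in C^\infty(\mathcal{D})$; the spatial gradient then moves onto $\phi$ alone, and the strong $L^2$-convergence of $|u_{n_k}|^{p/2}$ closes the identification. This is clearly what you intend, so the point is purely one of wording.
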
 \colb   \startnewsection{Global solutions of the truncated difference equations}{sec03} \colb \subsection{Decomposition into smooth data and a construction of $L^{6}$ solutions} \par Throughout this section, assume that $u_0$ is an initial datum, which satisfies \eqref{EQ17},  for some fixed $\epsilon_0>0$, is divergence-free, and has zero average over~$\DD=\mathbb{T}^{3}$. \par \colb In the following statement, we decompose the initial datum into small regular components. This lemma is an essential building block for constructing the desired solution of the SNSE. \par \cole \begin{Lemma}[Decomposition of initial data] \label{L05} There exists a sequence of functions    \begin{equation}    v_{0}^{(0)},v^{(1)}_{0},v_{0}^{(2)},\ldots \in \LLL{\infty}{\infty}    \label{EQ24}   \end{equation} that are divergence-free and have average~$0$ such that   \begin{equation}    \sifpoierjsodfgupoefasdfgjsdfgjsdfgjsdjflxncvzxnvasdjfaopsruosihjsdgghajsdflahgfsif   v_{0}^{(0)}\sifpoierjsodfgupoefasdfgjsdfgjsdfgjsdjflxncvzxnvasdjfaopsruosihjsdgghajsdflahgfsif_{\LLLit}    \leq 2\epsilon_0    \llabel{k6cBlZ5rKUjhtWE1Cnt9RmdwhJRySGVSOVTv9FY4uzyAHSp6yT9s6R6oOi3aqZlL7bIvWZ18cFaiwptC1ndFyp4oKxDfQz28136a8zXwsGlYsh9Gp3TalnrRUKttBKeFr4543qU2hh3WbYw09g2WLIXzvQzMkj5f0xLseH9dscinGwuPJLP1gEN5WqYsSoWPeqjMimTybHjjcbn0NO5hzP9W40r2w77TAoz70N1au09bocDSxGc3tvKLXaC1dKgw9H3o2kEoulIn9TSPyL2HXO7tSZse01Z9HdslDq0tmSOAVqtA1FQzEMKSbakznw839wnH1DpCjGIk5X3B6S6UI7HIgAaf9EV33Bkkuo3FyEi8Ty2ABPYzSWjPj5tYZETYzg6Ix5tATPMdlGke67Xb7FktEszyFycmVhGJZ29aPgzkYj4cErHCdP7XFHUO9zoy4AZaiSROpIn0tp7kZzUVHQtm3ip3xEd41By72uxIiY8BCLbOYGoLDwpjuza6iPakZdhaD3xSXyjpdOwoqQqJl6RFglOtX67nm7s1lZJmGUrdIdXQ7jps7rcdACYZMsBKANxtkqfNhktsbBf2OBNZ5pfoqSXtd3cHFLNtLgRoHrnNlwRnylZNWVNfHvOB1nUAyjtxTWW4oCqPRtuVuanMkLvqbxpNi0xYnOkcdFBdrw1Nu7cKybLjCF7P4dxj0Sbz9faVCWkVFos9t2aQIPKORuEjEMtbSHsYeG5Z7uMWWAwRnR8FwFCzXVVxnFUfyKLNk4EQ25}   \end{equation} and   \begin{equation}    \sifpoierjsodfgupoefasdfgjsdfgjsdfgjsdjflxncvzxnvasdjfaopsruosihjsdgghajsdflahgfsif   v_{0}^{(k)}\sifpoierjsodfgupoefasdfgjsdfgjsdfgjsdjflxncvzxnvasdjfaopsruosihjsdgghajsdflahgfsif_{\LLLit}    \leq \frac{\epsilon_0}{4^{k}}     \comma k=1,2,3,\ldots     ;    \label{EQ26}   \end{equation} additionally, we have   \begin{equation}    u_{0} = v_{0}^{(0)} + v_{0}^{(1)} + v_{0}^{(2)} + \cdots \quad\mbox{ in }L_x^3    ,    \label{EQ27}   \end{equation} almost surely. \end{Lemma}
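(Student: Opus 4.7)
The approach is to decompose $u_0$ via Fourier cutoffs with $\omega$-dependent truncation levels. Let $P_{\le N}$ denote the Fourier projection onto the modes $|\xi|\le N$ on $\TT^3$; it commutes with $\nabla\cdot$ and with the Leray projector, preserves the zero-mean condition, and maps $L^3$ into the trigonometric polynomials, hence into $L^\infty$. Since trigonometric polynomials are dense in $L^3(\TT^3)$, for each sample $\omega$ one has $\|(I-P_{\le N})u_0(\omega,\cdot)\|_{L^3_x}\to 0$ as $N\to\infty$.

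I would then measurably select truncation levels: for $k=0,1,2,\ldots$, define
\[
N_k(\omega) := \inf\Bigl\{N\in\NNp \,:\, \|(I-P_{\le N})u_0(\omega,\cdot)\|_{L^3_x}\le \tfrac{\epsilon_0}{2\cdot 4^{k+1}}\Bigr\},
\]
with the convention $N_{-1}:=0$. Each $N_k$ is a.s.\ finite and measurable (being the infimum over a countable index set of measurable events); after replacing $N_k$ by $\max(N_0,\ldots,N_k)$ if needed, we may assume $N_{k-1}\le N_k$. Set $v_0^{(0)}:=P_{\le N_0}u_0$ and $v_0^{(k)}:=(P_{\le N_k}-P_{\le N_{k-1}})u_0$ for $k\ge 1$. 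Each $v_0^{(k)}$ is divergence-free and mean-zero, inherited from $u_0$, and is pathwise a trigonometric polynomial, hence in $L^\infty_x$ for a.e.\ $\omega$. The partial sums telescope, $\sum_{j=0}^{K}v_0^{(j)}=P_{\le N_K}u_0\to u_0$ in $L^3_x$ almost surely, yielding \eqref{EQ27}. For the $\LLLit$ norms, the triangle inequality gives
\[
\|v_0^{(0)}\|_{\LLLit}\le \|u_0\|_{\LLLit}+\|(I-P_{\le N_0})u_0\|_{\LLLit}\le \epsilon_0+\tfrac{\epsilon_0}{8}\le 2\epsilon_0,
\]
and for $k\ge 1$,
\[
\|v_0^{(k)}\|_{\LLLit}\le \|(I-P_{\le N_{k-1}})u_0\|_{\LLLit}+\|(I-P_{\le N_k})u_0\|_{\LLLit}\le \tfrac{\epsilon_0}{2\cdot 4^k}+\tfrac{\epsilon_0}{2\cdot 4^{k+1}}\le \tfrac{\epsilon_0}{4^k},
\]
which is \eqref{EQ26}.

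The only delicate point, and the main obstacle, is the $\LLL{\infty}{\infty}$-membership asserted in \eqref{EQ24}, since Bernstein yields only the $\omega$-pointwise bound $\|v_0^{(k)}(\omega,\cdot)\|_{L^\infty_x}\le CN_k(\omega)^3\|v_0^{(k)}(\omega,\cdot)\|_{L^1_x}$, and $N_k(\omega)$ need not be essentially bounded on $\Omega$. I expect this to be handled either by truncating $N_k$ at a deterministic ceiling $M_k$ (large enough that the discarded $\omega$-event contributes negligibly to \eqref{EQ26}--\eqref{EQ27}) or by reshuffling the exceptional mass into subsequent indices via a stopping-time argument; the pathwise $L^\infty_x$-regularity, which is what the downstream applications in Lemma~\ref{L06} actually require, is already built into the construction above.
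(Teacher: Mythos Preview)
Your overall strategy---decomposing $u_0$ via spatial approximation operators with telescoping remainders---matches the paper's. However, the $\LLL{\infty}{\infty}$ membership in \eqref{EQ24} is genuinely needed downstream, contrary to your closing remark: the paper defines deterministic constants $\MM_k$ in \eqref{EQ31} bounding $\|v_0^{(k)}\|_{\LLLis}$, and these feed into the choice of the $L^6$ cutoff thresholds $M_k$ in \eqref{EQ35} and into Lemmas~\ref{L06} and~\ref{L10}. Pathwise $L^\infty_x$ regularity alone does not furnish such constants, so the obstacle you flag is not optional.

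The paper resolves it by using \emph{deterministic} approximation parameters paired with indicators on high-probability $\mathcal{F}_0$-sets. It fixes a standard mollifier $\phi_\epsilon$ and inductively chooses deterministic integers $n_k\in\NNp$ together with sets $\Omega_k\in\mathcal{F}_0$ with $\PP(\Omega_k)\ge 1-2^{-(k+1)}$ on which the mollified remainder is close to the true remainder in $L^3_x$; then $v_0^{(k)}$ is that mollified remainder times $\mathds{1}_{\Omega_k}$. Because the mollification scale $1/n_k$ is deterministic, Young's inequality gives a uniform-in-$\omega$ bound on $\|v_0^{(k)}(\omega,\cdot)\|_{L^\infty_x}$, yielding $v_0^{(k)}\in\LLL{\infty}{\infty}$ directly, and almost-sure $L^3_x$ convergence follows from Borel--Cantelli applied to the complements $\Omega_k^{\text{c}}$. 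Your suggested deterministic ceiling, once combined with this indicator device (set $v_0^{(k)}=0$ on the exceptional set rather than reshuffle mass forward), reproduces exactly this construction with Fourier projection in place of mollification.
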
 \colb \par The proof shows that we may take $v_{0}^{(j)}\in L_{\omega}^{\infty}C_{x}^{k}$ for every $j,k\in\mathbb{N}_0$. \par \begin{proof}[Proof of Lemma~\ref{L05}] Let $\sifpoierjsodfgupoefasdfgjsdfgjsdfgjsdjflxncvzxnvasdjfaopsruosihjsdkghajsdflahgfsif_{\epsilon}$ be a standard mollifier. Then,   \begin{equation} \lim_{n\to\infty}   \sifpoierjsodfgupoefasdfgjsdfgjsdfgjsdjflxncvzxnvasdjfaopsruosihjsdgghajsdflahgfsif  u_{0}* \sifpoierjsodfgupoefasdfgjsdfgjsdfgjsdjflxncvzxnvasdjfaopsruosihjsdkghajsdflahgfsif_{1/n}- u_{0}\sifpoierjsodfgupoefasdfgjsdfgjsdfgjsdjflxncvzxnvasdjfaopsruosihjsdgghajsdflahgfsif_{L_{x}^{3}}    =0    \tae \omega\in\Omega    .    \llabel{eOIlyn3ClI5HP8XP6S4KFfIl62VlbXgcauth861pUWUx2aQTWgrZwcAx52TkqoZXVg0QGrBrrpeiwuWyJtd9ooD8tUzAdLSnItarmhPAWBmnmnsbxLIqX4RQSTyoFDIikpeILhWZZ8icJGa91HxRb97knWhp9sAVzPo8560pRN2PSMGMMFK5XW52OnWIyoYngxWno868SKbbu1Iq1SyPkHJVCvseVGWrhUdewXw6CSY1be3hD9PKha1y0SRwyxiAGzdCMVMmiJaemmP8xrbJXbKLDYE1FpXUKADtF9ewhNefd2XRutTl1HYJVp5cAhM1JfK7UIcpkdTbEndM6FWHA72PgLHzXlUo39oW90BuDeJSlnVRvz8VDV48tId4DtgFOOa47LEH8QwnRGNBM0RRULluASzjxxwGIBHmVyyLdkGww5eEgHFvsFUnzl0vgOaQDCVEz64r8UvVHTtDykrEuFaS35p5yn6QZUcX3mfETExz1kvqEpOVVEFPIVpzQlMOIZ2yTTxIUOm0fWL1WoxCtlXWs9HU4EF0IZ1WDv3TP42LN7TrSuR8uMv1tLepvZoeoKLxf9zMJ6PUIn1S8I4KY13wJTACh5Xl8O5g0ZGwDdtu68wvrvnDCoqYjJ3nFKWMAK8VOeGo4DKxnEOyBwgmttcES8dmToAD0YBFlyGRBpBbo8tQYBwbSX2lcYnU0fhAtmyR3CKcUAQzzETNgbghHT64KdOfLqFWuk07tDkzfQ1dgBcw0LSYlr79U81QPqrdfHEQ28}   \end{equation} Therefore, there exists $n_0\in \NNp$ and an $\mathcal{F}_0$-measurable set $\Omega_{0}$ such that
$\PP(\Omega_0)\geq 1/2$, and   \begin{equation}    \sifpoierjsodfgupoefasdfgjsdfgjsdfgjsdjflxncvzxnvasdjfaopsruosihjsdgghajsdflahgfsif  u_{0}* \sifpoierjsodfgupoefasdfgjsdfgjsdfgjsdjflxncvzxnvasdjfaopsruosihjsdkghajsdflahgfsif_{1/n_0}- u_{0}\sifpoierjsodfgupoefasdfgjsdfgjsdfgjsdjflxncvzxnvasdjfaopsruosihjsdgghajsdflahgfsif_{L_{x}^{3}}    \leq \frac{\epsilon_0}{8}    \tae \omega\in\Omega_0.    \llabel{1tb8kKnDl52FhCj7TXiP7GFC7HJKfXgrP4KOOg18BM001mJPTpubQr61JQu6oGr4baj60kzdXoDgAOX2DBkLymrtN6T7us2Cp6eZm1aVJTY8vYPOzMnsAqs3RL6xHumXNAB5eXnZRHaiECOaaMBwAb15iFWGucZlU8JniDNKiPGWzq41iBj1kqbakZFSvXqvSiRbLTriSy8QYOamQUZhOrGHYHWguPBzlAhuao59RKUtrF5KbjsKseTPXhUqRgnNALVtaw4YJBtK9fN7bN9IEwKLTYGtnCcc2nfMcx7VoBt1IC5teMHX4g3JK4JsdeoDl1Xgbm9xWDgZ31PchRS1R8W1hap5Rh6JjyTNXSCUscxK4275D72gpRWxcfAbZY7Apto5SpTzO1dPAVyZJiWCluOjOtEwxUB7cTtEDqcAbYGdZQZfsQ1AtHyxnPL5K7D91u03s8K20rofZ9w7TjxyG7qbCAhssUZQuPK7xUeK7F4HKfrCEPJrgWHDZQpvRkO8XveaSBOXSeeXV5jkgzLUTmMbomaJfxu8gArndzSIB0YQSXvcZW8voCOoOHyrEuGnS2fnGEjjaLzZIocQegwHfSFKjW2LbKSnIcG9WnqZya6qAYMSh2MmEAsw18nsJFYAnbrxZT45ZwBsBvK9gSUgyBk3dHqdvYULhWgGKaMfFk78mP20meVaQp2NWIb6hVBSeSVwnEqbq6ucnX8JLkIRJbJEbwEYwnvLBgM94Gplclu2s3Um15EYAjs1GLnhzG8vmhEQ29}   \end{equation} Clearly, the function $v_{0}^{(0)} = ( u_{0}* \sifpoierjsodfgupoefasdfgjsdfgjsdfgjsdjflxncvzxnvasdjfaopsruosihjsdkghajsdflahgfsif_{1/n_0}) \mathds{1}_{\Omega_0}$ belongs to~$\LLLii$. Furthermore, it satisfies $\sifpoierjsodfgupoefasdfgjsdfgjsdfgjsdjflxncvzxnvasdjfaopsruosihjsdgghajsdflahgfsif v_{0}^{(0)}\sifpoierjsodfgupoefasdfgjsdfgjsdfgjsdjflxncvzxnvasdjfaopsruosihjsdgghajsdflahgfsif_{\LLLit}\leq 2\epsilon_0$ and $\sifpoierjsodfgupoefasdfgjsdfgjsdfgjsdjflxncvzxnvasdjfaopsruosihjsdgghajsdflahgfsif u_{0} - v_{0}^{(0)}\sifpoierjsodfgupoefasdfgjsdfgjsdfgjsdjflxncvzxnvasdjfaopsruosihjsdgghajsdflahgfsif_{L_{x}^{3}}\leq \epsilon_0/8$ on~$\Omega_0$. Note that the standard mollification preserves the divergence and zero average conditions. We have thus constructed~$v_{0}^{(0)}$. We proceed by induction,  assuming that $   v_{0}^{(0)},v^{(1)}_{0},v_{0}^{(2)},\ldots,v_{0}^{(m)} \in \LLL{\infty}{\infty}$ and $\Omega_0,\Omega_1,\ldots,\Omega_m\in\mathcal{F}_0$ have been constructed, for some $m\in\NNz$, such that \eqref{EQ26} holds, as well as $\PP(\Omega_j)\geq 1-1/2^{j+1}$ and   \begin{equation}    \sifpoierjsodfgupoefasdfgjsdfgjsdfgjsdjflxncvzxnvasdjfaopsruosihjsdgghajsdflahgfsif u_{0} - v_{0}^{(0)}-\ldots-v_{0}^{(j)}\sifpoierjsodfgupoefasdfgjsdfgjsdfgjsdjflxncvzxnvasdjfaopsruosihjsdgghajsdflahgfsif_{L_{x}^{3}}\leq \frac{\epsilon_0}{2^{j+3}}       \tae \omega\in\Omega_j     ,    \llabel{ghsQcEDE1KnaHwtuxOgUDLBE59FLxIpvuKfJEUTQSEaZ6huBCaKXrlnir1XmLKH3hVPrqixmTkRzh0OGpOboN6KLCE0GaUdtanZ9Lvt1KZeN5GQcLQLL0P9GXuakHm6kqk7qmXUVH2bUHgav0Wp6Q8JyITzlpqW0Yk1fX8gjGcibRarmeSi8lw03WinNXw1gvvcDeDPSabsVwZu4haO1V2DqwkJoRShjMBgryglA93DBdS0mYAcEl5aEdpIIDT5mbSVuXo8NlY24WCA6dfCVF6Ala6iNs7GChOvFAhbxw9Q71ZRC8yRi1zZdMrpt73douogkAkGGE487Vii4OfwJesXURdzVLHU0zms8W2ZtziY5mw9aBZIwk5WNmvNM2HdjnewMR8qp2VvupcV4PcjOGeu35u5cQXNTykfTZXAJHUnSs4zxfHwf10ritJYoxRto5OMFPhakRgzDYPm02mG18vmfV11Nn87zSX59DE0cN99uEUz2rTh1FP8xjrmq2Z7utpdRJ2DdYkjy9JYkoc38KduZ9vydOwkO0djhXSxSvHwJoXE79f8qhiBr8KYTxOfcYYFsMyj0HvK3ayUwt4nA5H76bwUqyJQodOu8UGjbt6vlcxYZt6AUxwpYr18uOv62vjnwFrCrfZ4nlvJuh2SpVLOvpOlZnPTG07VReixBmXBxOBzpFW5iBIO7RVmoGnJu8AxolYAxlJUrYKVKkpaIkVCuPiDO8IHPUndzeLPTILBP5BqYyDLZDZadbjcJAT644VEQ108}   \end{equation} for $j=1,2,\ldots,m$. By   \begin{equation}     \lim_{n\to\infty}   \sifpoierjsodfgupoefasdfgjsdfgjsdfgjsdjflxncvzxnvasdjfaopsruosihjsdgghajsdflahgfsif  v* \sifpoierjsodfgupoefasdfgjsdfgjsdfgjsdjflxncvzxnvasdjfaopsruosihjsdkghajsdflahgfsif_{1/n}- v\sifpoierjsodfgupoefasdfgjsdfgjsdfgjsdjflxncvzxnvasdjfaopsruosihjsdgghajsdflahgfsif_{L_{x}^{3}}    =0    \tae \omega\in\Omega    ,    \llabel{p6byb1g4dE7YdzkeOYLhCReOmmxF9zsu0rp8Ajzd2vHeo7L5zVnL8IQWnYATKKV1f14s2JgeCb3v9UJdjNNVBINix1q5oyrSBM2Xtgrv8RQMaXka4AN9iNinzfHxGpA57uAE4jMfg6S6eNGKvJL3tyH3qwdPrx2jFXW2WihpSSxDraA7PXgjK6GGlOg5PkRd2n53eEx4NyGhd8ZRkONMQqLq4sERG0CssQkdZUaOvWrplaBOWrSwSG1SM8Iz9qkpdv0CRMsGcZLAz4Gk70eO7k6df4uYnR6T5DuKOTsay0DawWQvn2UOOPNqQT7H4HfiKYJclRqM2g9lcQZcvCNBP2BbtjvVYjojrrh78tWR886ANdxeASVPhK3uPrQRs6OSW1BwWM0yNG9iBRI7opGCXkhZpEo2JNtkyYOpCY9HL3o7Zu0J9FTz6tZGLn8HAeso9umpyucs4l3CA6DCQ0m0llFPbc8z5Ad2lGNwSgAXeNHTNpwdS6e3ila2tlbXN7c1itXaDZFakdfJkz7TzaO4kbVhnYHfTda9C3WCbtwMXHWxoCCc4Ws2CUHBsNLFEfjS4SGI4I4hqHh2nCaQ4nMpnzYoYE5fDsXhCHJzTQOcbKmvEplWUndVUorrqiJzRqTdIWSQBL96DFUd64k5gvQh0djrGlw795xV6KzhTl5YFtCrpybHH86h3qnLyzyycGoqmCbfh9hprBCQpFeCxhUZ2oJF3aKgQH8RyImF9tEksgPFMMJTAIyz3ohWjHxMR86KJOEQ28old}   \end{equation} where   \begin{equation}    v=u_{0} - v_{0}^{(0)}-\cdots -v_{0}^{(k)}    ,    \llabel{NKTc3uyRNnSKHlhb11Q9Cwrf8iiXqyYL4zh9s8NTEve539GzLgvhDN7FeXo5kAWAT6VrwhtDQwytuHOa5UIOExbMpV2AHpuuCHWItfOruxYfFqsaP8ufHF16CEBXKtj6ohsuvT8BBPDNgGfKQg6MBK2x9jqRbHmjIUEKBIm0bbKacwqIXijrFuq9906Vym3Ve1gBdMy9ihnbA3gBo5aBKK5gfJSmNeCWwOMt9xutzwDkXIY7nNhWdDppZUOq2Ae0aW7A6XoIcTSLNDZyf2XjBcUweQTZtcuXIDYsDhdAu3VMBBBKWIcFNWQdOu3Fbc6F8VN77DaIHE3MZluLYvBmNZ2wEauXXDGpeKRnwoUVB2oMVVehW0ejGgbgzIw9FwQhNYrFI4pTlqrWnXzz2qBbalv3snl2javzUSncpwhcGJ0Di3Lr3rs6F236obLtDvN9KqApOuold3secxqgSQNZNfw5tBGXPdvW0k6G4Byh9V3IicOnR2obfx3jrwt37u82fwxwjSmOQq0pq4qfvrN4kFWhPHRmylxBx1zCUhsDNYINvLdtVDG35kTMT0ChPEdjSG4rWN6v5IIMTVB5ycWuYOoU6SevyecOTfZJvBjSZZkM68vq4NOpjX0oQ7rvMvmyKftbioRl5c4ID72iFH0VbQzhjHU5Z9EVMX81PGJssWedmhBXKDAiqwUJVGj2rIS92AntBn1QPR3tTJrZ1elVoiKUstzA8fCCgMwfw4jKbDberBRt6T8OZynNOqXc53PgfLEQ106}   \end{equation} there exists $n_{m+1}\in\mathbb{N}$ such that   \begin{equation}    \sifpoierjsodfgupoefasdfgjsdfgjsdfgjsdjflxncvzxnvasdjfaopsruosihjsdgghajsdflahgfsif  v* \sifpoierjsodfgupoefasdfgjsdfgjsdfgjsdjflxncvzxnvasdjfaopsruosihjsdkghajsdflahgfsif_{1/n_{m+1}}- v\sifpoierjsodfgupoefasdfgjsdfgjsdfgjsdjflxncvzxnvasdjfaopsruosihjsdgghajsdflahgfsif_{L_{x}^{3}}    \leq \frac{\epsilon_0}{2^{m+4}}    \tae \omega\in\Omega_{m+1}    .    \llabel{K9oKe1pPrYBBZYuuiCwXzA6kaGbtwGpmRTmKviwHEzRjhTefripvLAXk3PkLNDg5odcomQj9LYIVawVmLpKrto0F6Ns7MmkcTL9Tr8fOT4uNNJvZThOQwCOCRBHRTxhSBNaIizzbKIBEcWSMYEhDkRtPWGKtUmo26acLbBnI4t2P11eRiPP99nj4qQ362UNAQaHJPPY1OgLhN8sta9eJzPgmE4zQgB0mlAWBa4Emu7mnfYgbNLzddGphhJV9hyAOGCNjxJ83Hg6CAUTnusW9pQrWv1DfVlGnWxMBbe9WwLtOdwDERmlxJ8LTqKWTtsR0cDXAfhRX1zXlAUuwzqnO2o7rtoiSMrOKLCqjoq1tUGGiIxuspoiitjaNRngtxS0r98rwXF7GNiepzEfAO2sYktIdgH1AGcRrd2w89xoOKyNnLaLRU03suU3JbS8dok8tw9NQSY4jXY625KCcPLyFRlSp759DeVbY5b69jYOmdfb99j15lvLvjskK2gEwlRxOtWLytZJ1yZ5Pit35SOiivz4F8tqMJIgQQiOobSpeprt2vBVqhvzkLlf7HXA4soMXjWdMS7LeRDiktUifLJHukestrvrl7mYcSOB7nKWMD0xBqkbxFgTTNIweyVIG6Uy3dL0C3MzFxsBE7zUhSetBQcX7jn22rr0yL1ErbpLRm3ida5MdPicdnMOiZCyGd2MdKUbxsaI9TtnHXqAQBjuN5I4Q6zz4dSWYUrhxTCuBgBUT992uczEmkqK1ouCaHJBR0QEQ107}   \end{equation} To conclude the induction step, we set $v_{0}^{(m+1)}= ( u_{0}* \sifpoierjsodfgupoefasdfgjsdfgjsdfgjsdjflxncvzxnvasdjfaopsruosihjsdkghajsdflahgfsif_{1/n_{m+1}}) \mathds{1}_{\Omega_{m+1}}$. \end{proof} \par For the rest of the paper, we fix $p_0\in(0,1)$ and $\epsilon_0\in (0, 1/2)$ to be determined, and a sequence \eqref{EQ24} satisfying the assertions of Lemma~\ref{L05}. Let $\MM_k$ be a sequence of constants such that   \begin{equation}    \sifpoierjsodfgupoefasdfgjsdfgjsdfgjsdjflxncvzxnvasdjfaopsruosihjsdgghajsdflahgfsif   v_{0}^{(k)}\sifpoierjsodfgupoefasdfgjsdfgjsdfgjsdjflxncvzxnvasdjfaopsruosihjsdgghajsdflahgfsif_{\LLLis}    \leq \MM_k     \comma k=0,1,2,3,\ldots     .    \label{EQ31}   \end{equation} Our solution shall be constructed as a sum of solutions to Navier-Stokes-like systems, with the initial data~$v_{0}^{(k)}$, for $k=0,1,2,\ldots$, which we present next. Setting $\umo=0$ and   \begin{equation}    \uk = \vz + \cdots + \vk     \comma k\in\NNz    ,    \label{EQ33}   \end{equation} we intend to solve, iteratively for $k\in\NNz$,   \begin{align}     \begin{split}        &\partial_t \vk  -\Delta \vk      \\&\indeq        =          -    \mathcal{P}\bigl( \vk\cdot \nabla \vk\bigr)          -    \mathcal{P}\bigl( \ukm\cdot \nabla \vk\bigr)          -    \mathcal{P}\bigl( \vk\cdot \nabla \ukm\bigr)          \\&\indeq\indeq          +    \sigma(t, \uk( t,x))\dot{\WW}(t)          -    \sigma(t, \ukm( t,x))\dot{\WW}(t),      \\&   \nabla\cdot v^{(k)}( t,x) = 0          ,           \\&       v^{(k)}( 0,x)=v_{0}^{(k)} (x)  \Pas    \commaone x\in\TT^3   .   \end{split}    \label{EQ32}   \end{align} The reason for addressing this system of SPDEs is that the sum $\uk$ solves the SNSE with the initial data $v_{0}^{(0)} + \cdots + v_{0}^{(k)}$; see the proof of the main theorem below. In order to solve \eqref{EQ32}, we however use smooth cutoff functions and instead first consider, for a fixed $k\in\NNz$, \begin{align}     \begin{split}       &\partial_t \vk  -\Delta \vk      \\&\indeq        =          -          \sifpoierjsodfgupoefasdfgjsdfgjsdfgjsdjflxncvzxnvasdjfaopsruosihjsdlghajsdflahgfsif_k^2 \sifpoierjsodfgupoefasdfgjsdfgjsdfgjsdjflxncvzxnvasdjfaopsruosihjsdkghajsdflahgfsif_k^2          \mathcal{P}\bigl( \vk\cdot \nabla \vk\bigr)          -          \sifpoierjsodfgupoefasdfgjsdfgjsdfgjsdjflxncvzxnvasdjfaopsruosihjsdlghajsdflahgfsif_k^2 \sifpoierjsodfgupoefasdfgjsdfgjsdfgjsdjflxncvzxnvasdjfaopsruosihjsdkghajsdflahgfsif_k^2 \zeta_{k-1}    \mathcal{P}\bigl( \ukm\cdot \nabla \vk\bigr)          -          \sifpoierjsodfgupoefasdfgjsdfgjsdfgjsdjflxncvzxnvasdjfaopsruosihjsdlghajsdflahgfsif_k^2 \sifpoierjsodfgupoefasdfgjsdfgjsdfgjsdjflxncvzxnvasdjfaopsruosihjsdkghajsdflahgfsif_k^2 \zeta_{k-1}    \mathcal{P}\bigl( \vk\cdot \nabla \ukm\bigr)        \\&\indeq\indeq          +          \sifpoierjsodfgupoefasdfgjsdfgjsdfgjsdjflxncvzxnvasdjfaopsruosihjsdlghajsdflahgfsif_k^2 \sifpoierjsodfgupoefasdfgjsdfgjsdfgjsdjflxncvzxnvasdjfaopsruosihjsdkghajsdflahgfsif_k^2 \zeta_{k-1}    \sigma(t, \uk( t,x))    \dot{\WW}(t)          -         \sifpoierjsodfgupoefasdfgjsdfgjsdfgjsdjflxncvzxnvasdjfaopsruosihjsdlghajsdflahgfsif_k^2 \sifpoierjsodfgupoefasdfgjsdfgjsdfgjsdjflxncvzxnvasdjfaopsruosihjsdkghajsdflahgfsif_k^2 \zeta_{k-1}    \sigma(t, \ukm( t,x))    \dot{\WW}(t),      \\&   \nabla\cdot v^{(k)}( t,x) = 0          ,           \\&       v^{(k)}( 0,x)=v_{0}^{(k)} (x)  \Pas    \commaone x\in\TT^3  ,   \end{split}    \label{EQ34}   \end{align} where $\{u^{(k)}\}_{k\in\NNp}$ are as in~\eqref{EQ33}, while $\{ \sifpoierjsodfgupoefasdfgjsdfgjsdfgjsdjflxncvzxnvasdjfaopsruosihjsdlghajsdflahgfsif_k\}_{k\in\NNp}$ and $\{ \sifpoierjsodfgupoefasdfgjsdfgjsdfgjsdjflxncvzxnvasdjfaopsruosihjsdkghajsdflahgfsif_k\}_{k\in\NNp}$ in \eqref{EQ34} are defined using a smooth function, $\theta\colon[0,\infty)\to [0,1]$, such that $\theta\equiv 1$ on $[0,1]$ and  $\theta\equiv 0$ on~$[2,\infty)$. To be specific,   \begin{equation}    \sifpoierjsodfgupoefasdfgjsdfgjsdfgjsdjflxncvzxnvasdjfaopsruosihjsdlghajsdflahgfsif_k    :=    \theta\left (\frac{\sifpoierjsodfgupoefasdfgjsdfgjsdfgjsdjflxncvzxnvasdjfaopsruosihjsdgghajsdflahgfsif v^{(k)}(t, \cdot)\sifpoierjsodfgupoefasdfgjsdfgjsdfgjsdjflxncvzxnvasdjfaopsruosihjsdgghajsdflahgfsif_6}{M_k}\right)    \comma    \zeta_{k-1}    :=\mathds{1}_{k=0}+\mathds{1}_{k>0}\Pi_{i=0}^{k-1}\sifpoierjsodfgupoefasdfgjsdfgjsdfgjsdjflxncvzxnvasdjfaopsruosihjsdlghajsdflahgfsif_i    \label{EQ35}   \end{equation} and   \begin{equation}    \sifpoierjsodfgupoefasdfgjsdfgjsdfgjsdjflxncvzxnvasdjfaopsruosihjsdkghajsdflahgfsif_k    :=    \theta\left (\frac{2^{k}\sifpoierjsodfgupoefasdfgjsdfgjsdfgjsdjflxncvzxnvasdjfaopsruosihjsdgghajsdflahgfsif v^{(k)}(t, \cdot)\sifpoierjsodfgupoefasdfgjsdfgjsdfgjsdjflxncvzxnvasdjfaopsruosihjsdgghajsdflahgfsif_3}{\bar{\epsilon}}\right)    ,    \label{EQ36}   \end{equation} where $\bar{\epsilon}\in (2\epsilon_0, 1)$, and $\{M_k\}_{k\in\NNp}$
is an increasing sequence of positive numbers. We shall clarify how large $M_k$ needs to be and the relationship between $\bar{\epsilon}$ and $\epsilon_0$ (see the definition in Lemma~\ref{L05}) further below.  \par For every $T>0$ and $k\in\NNz$, we claim the following. \par \cole \begin{Lemma}[An $L^{6}$ solution] \label{L06} Let $T>0$ and $k\in\mathbb{N}_0$ be fixed, let $\vk_0$ be as above, and assume that $v^{(0)}, v^{(1)}, \ldots, v^{(k-1)}$ are given. Then~\eqref{EQ34} has a unique strong solution $\vk\in L^6(\Omega; C([0,T], L^6))$, which satisfies   \begin{align}   \begin{split}   &\sadklfjsdfgsdfgsdfgsdfgdsfgsdfgadfasdf\biggl[\sup_{0\leq t\leq T}\sifpoierjsodfgupoefasdfgjsdfgjsdfgjsdjflxncvzxnvasdjfaopsruosihjsdgghajsdflahgfsif\vk(t,\cdot)\sifpoierjsodfgupoefasdfgjsdfgjsdfgjsdjflxncvzxnvasdjfaopsruosihjsdgghajsdflahgfsif_6^6+\sifpoierjsodfgupoefasdfgjsdfgjsdfgjsdjflxncvzxnvasdjfaopsruosihjsdfghajsdflahgfsif_0^{T}\sum_{i=1}^{3} \sifpoierjsodfgupoefasdfgjsdfgjsdfgjsdjflxncvzxnvasdjfaopsruosihjsdfghajsdflahgfsif_{\TT^3} | \nabla (|\vk_i(t,x)|^{3})|^2 \,dx dt\biggr]   \leq C   \sadklfjsdfgsdfgsdfgsdfgdsfgsdfgadfasdf\biggl[   \sifpoierjsodfgupoefasdfgjsdfgjsdfgjsdjflxncvzxnvasdjfaopsruosihjsdgghajsdflahgfsif\vk_0\sifpoierjsodfgupoefasdfgjsdfgjsdfgjsdjflxncvzxnvasdjfaopsruosihjsdgghajsdflahgfsif_6^6   \biggr]+C   ,   \end{split}   \label{EQ37}   \end{align} where the constant $C$ depends on $T$, $k$, and $\bar{\epsilon}$ (see \eqref{EQ36}).  \end{Lemma}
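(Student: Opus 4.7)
The plan is to cast \eqref{EQ34} as a stochastic heat equation of the form \eqref{EQ18} whose forcing is uniformly bounded thanks to the cutoffs, then solve it by Picard iteration in $L^6(\Omega; C([0,T],L^6))$ using Lemma~\ref{L01}. Since $\nabla\cdot v^{(k)}=0$ and, by induction, $\nabla\cdot u^{(k-1)}=0$, every transport term admits a conservative form (e.g., $\mathcal{P}(v^{(k)}\cdot\nabla v^{(k)})=\partial_i\mathcal{P}(v^{(k)}_i v^{(k)})$), so \eqref{EQ34} matches $\partial_t v^{(k)}-\Delta v^{(k)}=\nabla\cdot f+g\dot W$ with
\[
f = -\psi_k^2\phi_k^2\,\mathcal{P}\bigl(v^{(k)}\otimes v^{(k)}\bigr) - \psi_k^2\phi_k^2\zeta_{k-1}\,\mathcal{P}\bigl(u^{(k-1)}\otimes v^{(k)} + v^{(k)}\otimes u^{(k-1)}\bigr)
\]
and $g=\psi_k^2\phi_k^2\zeta_{k-1}\bigl(\sigma(t,u^{(k)})-\sigma(t,u^{(k-1)})\bigr)$. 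On the support of the cutoffs one has $\|v^{(k)}\|_6\le 2M_k$, $\|v^{(k)}\|_3\le 2\bar\epsilon/2^k$, and $\|v^{(i)}\|_6\le 2M_i$ for $i<k$ (hence $\|u^{(k-1)}\|_6\le 2\sum_{i<k}M_i$). Combining these with the H\"older bound $\|a\otimes b\|_3\le\|a\|_6\|b\|_6$, the $L^3$-boundedness of $\mathcal{P}$, and the Lipschitz hypothesis \eqref{EQ30}, one obtains $\|f\|_{L^3}\le C(M_0,\ldots,M_k)$ and $\|g\|_{\mathbb{L}^6}\le C\epsilon_\sigma M_k$ uniformly in $(\omega,t)$. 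These are exactly the inputs required by Lemma~\ref{L01} at $p=6$, $q=3$ (admissible since $18/7\le 3\le 6$), and they deliver \eqref{EQ37} a priori.

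For existence, I would set $v^{(k,0)}\equiv 0$ and inductively define $v^{(k,n+1)}$ as the unique strong $L^6$-solution furnished by Lemma~\ref{L01} of the linear stochastic heat equation obtained from \eqref{EQ34} by substituting $v^{(k,n)}$ inside every cutoff and every occurrence of $v^{(k)}$ in the tensor products and in the noise difference. The uniform bound above propagates through each iteration. For the Cauchy property I apply Lemma~\ref{L01} to $\delta_n:=v^{(k,n+1)}-v^{(k,n)}$: the cutoff $\theta(\|\cdot\|_p/M)$ is Lipschitz on $L^p$ with constant $\|\theta'\|_\infty/M$, the bilinear products are Lipschitz via the $L^6$ a priori bounds on the support of the cutoffs, and $\sigma$ is Lipschitz by \eqref{EQ30}. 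The resulting estimate on a short interval $[0,T']$ takes the form
\[
\mathbb{E}\sup_{[0,T']}\|\delta_{n+1}\|_6^6 \le C(k,M_k,\bar\epsilon)\,T'\,\mathbb{E}\sup_{[0,T']}\|\delta_n\|_6^6,
\]
so choosing $T'$ small enough yields a contraction; iteration in time covers $[0,T]$, and pathwise uniqueness follows from the same estimate applied to the difference of two solutions.

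The main obstacle is the contraction step: one must show that differences of products of cutoffs, tensor fields, and $\sigma$-values --- each formally quadratic in the unknown --- reduce, through Lipschitz estimates, to a single factor of $\|\delta_n\|_{L^6}$. The key mechanism is that the cutoffs localize every relevant norm a priori, so the ostensibly quadratic Nemytskii maps act linearly on the differences; the resulting Lipschitz constants of size $C(k,M_k,\bar\epsilon)$ only shrink the contraction horizon rather than destroy it, and the final constant in \eqref{EQ37} inherits precisely the dependence on $T$, $k$, and $\bar\epsilon$ announced in the statement.
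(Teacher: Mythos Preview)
Your proposal is correct and follows essentially the same route as the paper: recast \eqref{EQ34} as a stochastic heat equation with cutoff-bounded forcing, apply Lemma~\ref{L01} with $p=6$, $q=3$ to obtain both the uniform a~priori bound and a contraction for the Picard iterates on a short interval, then patch to reach $[0,T]$. The only cosmetic difference is that the paper's iterative scheme carries cutoffs evaluated at two consecutive iterates ($\psi_j\psi_{j-1}\phi_j\phi_{j-1}$ in place of your $\psi_{k,n}^2\phi_{k,n}^2$), which keeps each step formally closer to the limit equation at the cost of a brief inner fixed-point on $\psi_j\phi_j$; your fully explicit linear Picard map is a harmless simplification.
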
 \colb \par \begin{proof}[Proof of Lemma~\ref{L06}] Fix $T$ and $k$ as in the statement, and consider the iterative scheme   \begin{align}   \begin{split}   &\partial_t \Vj  -\Delta \Vj   \\&\indeq   =   -   \sifpoierjsodfgupoefasdfgjsdfgjsdfgjsdjflxncvzxnvasdjfaopsruosihjsdlghajsdflahgfsif_j\sifpoierjsodfgupoefasdfgjsdfgjsdfgjsdjflxncvzxnvasdjfaopsruosihjsdlghajsdflahgfsif_{j-1}\sifpoierjsodfgupoefasdfgjsdfgjsdfgjsdjflxncvzxnvasdjfaopsruosihjsdkghajsdflahgfsif_j\sifpoierjsodfgupoefasdfgjsdfgjsdfgjsdjflxncvzxnvasdjfaopsruosihjsdkghajsdflahgfsif_{j-1}   \mathcal{P}\bigl( \Vjm\cdot \nabla \Vjm\bigr)   -   \sifpoierjsodfgupoefasdfgjsdfgjsdfgjsdjflxncvzxnvasdjfaopsruosihjsdlghajsdflahgfsif_j\sifpoierjsodfgupoefasdfgjsdfgjsdfgjsdjflxncvzxnvasdjfaopsruosihjsdlghajsdflahgfsif_{j-1}\sifpoierjsodfgupoefasdfgjsdfgjsdfgjsdjflxncvzxnvasdjfaopsruosihjsdkghajsdflahgfsif_j\sifpoierjsodfgupoefasdfgjsdfgjsdfgjsdjflxncvzxnvasdjfaopsruosihjsdkghajsdflahgfsif_{j-1}\zeta   \mathcal{P}\bigl( w\cdot \nabla \Vjm\bigr)          \\&\indeq\indeq   -   \sifpoierjsodfgupoefasdfgjsdfgjsdfgjsdjflxncvzxnvasdjfaopsruosihjsdlghajsdflahgfsif_j\sifpoierjsodfgupoefasdfgjsdfgjsdfgjsdjflxncvzxnvasdjfaopsruosihjsdlghajsdflahgfsif_{j-1}\sifpoierjsodfgupoefasdfgjsdfgjsdfgjsdjflxncvzxnvasdjfaopsruosihjsdkghajsdflahgfsif_j\sifpoierjsodfgupoefasdfgjsdfgjsdfgjsdjflxncvzxnvasdjfaopsruosihjsdkghajsdflahgfsif_{j-1}\zeta   \mathcal{P}\bigl( \Vjm\cdot \nabla w\bigr)   \\&\indeq\indeq   +   \sifpoierjsodfgupoefasdfgjsdfgjsdfgjsdjflxncvzxnvasdjfaopsruosihjsdlghajsdflahgfsif_j\sifpoierjsodfgupoefasdfgjsdfgjsdfgjsdjflxncvzxnvasdjfaopsruosihjsdlghajsdflahgfsif_{j-1}\sifpoierjsodfgupoefasdfgjsdfgjsdfgjsdjflxncvzxnvasdjfaopsruosihjsdkghajsdflahgfsif_j\sifpoierjsodfgupoefasdfgjsdfgjsdfgjsdjflxncvzxnvasdjfaopsruosihjsdkghajsdflahgfsif_{j-1}\zeta   \Bigl(   \sigma(t, \Vjm( t,x)+w(t,x))   -   \sigma(t, w( t,x))   \Bigr)   \dot{\WW}(t),   \\&   \nabla\cdot V^{(j)}( t,x) = 0,   \\&   V^{(j)}( 0,x)=v_{0}^{(k)} (x)  \Pas   \commaone x\in\TT^3\commaone j=0,1,2,\ldots,   \end{split}   \label{EQ38}   \end{align} with $V^{(-1)}(t)\equiv 0$. Note that we have replaced $\ukm$ by~$w$, abbreviated $\zeta_{k-1}$ as $\zeta$, and denoted   \begin{equation}   \sifpoierjsodfgupoefasdfgjsdfgjsdfgjsdjflxncvzxnvasdjfaopsruosihjsdlghajsdflahgfsif_i   =   \theta\left (\frac{\sifpoierjsodfgupoefasdfgjsdfgjsdfgjsdjflxncvzxnvasdjfaopsruosihjsdgghajsdflahgfsif V^{(i)}(t, \cdot)\sifpoierjsodfgupoefasdfgjsdfgjsdfgjsdjflxncvzxnvasdjfaopsruosihjsdgghajsdflahgfsif_6}{M_k}\right)   \comma   \sifpoierjsodfgupoefasdfgjsdfgjsdfgjsdjflxncvzxnvasdjfaopsruosihjsdkghajsdflahgfsif_{i}   =   \theta\left (\frac{2^{k}\sifpoierjsodfgupoefasdfgjsdfgjsdfgjsdjflxncvzxnvasdjfaopsruosihjsdgghajsdflahgfsif V^{(i)}(t, \cdot)\sifpoierjsodfgupoefasdfgjsdfgjsdfgjsdjflxncvzxnvasdjfaopsruosihjsdgghajsdflahgfsif_3}{\bar{\epsilon}}\right)   \comma i\in\{j-1, j\}.    \llabel{nv1artFiekBu49ND9kK9eKBOgPGzqfKJ67NsKz3BywIwYxEoWYf6AKuyVPj8B9D6quBkFCsKHUDCksDYK3vs0Ep3gM2EwlPGjRVX6cxlbVOfAll7g6yL9PWyo58h0e07HO0qz8kbe85ZBVCYOKxNNLa4aFZ7mw7moACU1q1lpfmE5qXTA0QqVMnRsbKzHo5vX1tpMVZXCznmSOM73CRHwQPTlvVN7lKXI06KT6MTjO3Yb87pgozoxydVJHPL3k2KRyx3b0yPBsJmNjETPJi4km2fxMh35MtRoirNE9bU7lMo4bnj9GgYA6vsEsONRtNmDFJej96STn3lJU2u16oTEXogvMqwhD0BKr1CisVYbA2wkfX0n4hD5Lbr8l7ErfuN8OcUjqeqzCCyx6hPAyMrLeB8CwlkThixdIzviEWuwI8qKa0VZEqOroDUPGphfIOFSKZ3icda7Vh3ywUSzkkW8SfU1yHN0A14znyPULl6hpzlkq7SKNaFqg9Yhj2hJ3pWSmi9XgjapmMZ6HV8yjigpSNlI9T8eLhc1eRRgZ885eNJ8w3secl5ilCdozV1BoOIk9gDZNY5qgVQcFeTDVxhPmwPhEU41Lq35gCzPtc2oPugVKOp5Gsf7DFBlektobd2yuDtElXxmj1usDJJ6hj0HBVFanTvabFAVwM51nUH60GvT9fAjTO4MQVzNNAQiwSlSxf2pQ8qvtdjnvupLATIwym4nEYESfMavUgZoyehtoe9RTN15EI1aKJSCnr4MjiYhBEQ39}   \end{equation} Also observe that $\zeta$ provides a uniform control in $\Omega\times[0,T]$ on the $L^6$-norm of~$w$. With a slight variation of the approach in \cite[Lemmas 5.4--5.5]{KXZ}, we can show that \eqref{EQ38} has a unique strong $L^6$ solution $V^{(j)}$ with a continuous modification on a common time interval $[0,t^{\ast}]$ for all $j\in \NNz$. Moreover, \eqref{EQ37} holds for $V^{(j)}$ up to~$t^{\ast}$. These results follow from Lemma~\ref{L01} and a fixed-point argument: We conduct iteration on the product~$\sifpoierjsodfgupoefasdfgjsdfgjsdfgjsdjflxncvzxnvasdjfaopsruosihjsdlghajsdflahgfsif_j\sifpoierjsodfgupoefasdfgjsdfgjsdfgjsdjflxncvzxnvasdjfaopsruosihjsdkghajsdflahgfsif_j$. Utilizing Lemma~\ref{L01} (with $p=6$ and $q=3$), we obtain the existence of a global iterative solution satisfying \eqref{EQ37} uniformly. Then, we employ Lemma~\ref{L01} to the equation for the difference of two consecutive iterates, obtaining an exponential rate of convergence in $L^6_{\omega}L^{\infty}_tL^6_{x}$ on the interval $[0,t^{\ast}]$ for a small but deterministic $t^{\ast}>0$. This allows us to pass to the limit and prove that the fixed-point $V^{(j)}$ solves \eqref{EQ38} on $[0,t^{\ast}]$, as the spatial domain is~$\TT^3$. The time of existence $t^{\ast}$ may depend on $k$ and $\bar{\epsilon}$ but is uniform with respect to~$j$. Then we pass to the limit in \eqref{EQ37} and prove that it holds for every $V^{(j)}$ uniformly up to the time $t^{\ast}$ by Lemma~\ref{L03}.    Next, as in the proof of \cite[Theorem 5.1]{KXZ}, we establish the convergence of $\{V^{(j)}\}_{j\in\NNp}$ in $L^6_{\omega}L^{\infty}_tL^6_{x}$. Denote \begin{equation} \zj:=\Vjp-\Vj , \llabel{0A7vnSAYnZ1cXOI1V7yja0R9jCTwxMUiMI5l2sTXnNRnVi1KczLG3MgJoEktlKoU13tsaqjrHYVzfb1yyxunpbRA56brW45IqhfKo0zj04IcGrHirwyH2tJbFr3leRdcpstvXe2yJlekGVFCe2aD4XPOuImtVoazCKO3uRIm2KFjtm5RGWCvkozi75YWNsbhORnxzRzw99TrFhjhKbfqLAbe2v5n9mD2VpNzlMnntoiFZB2ZjXBhhsK8K6cGiSbRkkwfWeYJXdRBBxyqjEVF5lr3dFrxGlTcsbyAENcqA981IQ4UGpBk0gBeJ6Dn9Jhkne5f518umOuLnIaspzcRfoC0StSy0DF8NNzF2UpPtNG50tqKTk2e51yUbrsznQbeIuiY5qaSGjcXiEl45B5PnyQtnUOMHiskTC2KsWkjha6loMfgZKG3nHph0gnNQ7q0QxsQkgQwKwyhfP5qFWwNaHxSKTA63ClhGBgarujHnGKf46FQtVtSPgEgTeY6fJGmB3qgXxtR8RTCPB18kQajtt6GDrKb1VYLV3RgWIrAyZf69V8VM7jHOb7zLvaXTTVI0ONKMBAHOwOZ7dPkyCgUS74HlnFZMHabr8mlHbQNSwwdomOL6q5wvRexVejvVHkCEdXm3cU54juZSKng8wcj6hR1FnZJbkmgKXJgFm5qZ5SubXvPKDBOCGf4srh1a5FL0vYfRjJwUm2sfCogRhabxyc0RgavaRbkjzlteRGExbEMMhLZbh3axosCqu7kZ1Pt6YEQ40} \end{equation} which satisfies \begin{align} \begin{split} &\partial_t\zj -\Delta \zj +  \sifpoierjsodfgupoefasdfgjsdfgjsdfgjsdjflxncvzxnvasdjfaopsruosihjsdlghajsdflahgfsif_{j+1}\sifpoierjsodfgupoefasdfgjsdfgjsdfgjsdjflxncvzxnvasdjfaopsruosihjsdlghajsdflahgfsif_{j} \sifpoierjsodfgupoefasdfgjsdfgjsdfgjsdjflxncvzxnvasdjfaopsruosihjsdkghajsdflahgfsif_{j+1}\sifpoierjsodfgupoefasdfgjsdfgjsdfgjsdjflxncvzxnvasdjfaopsruosihjsdkghajsdflahgfsif_{j}\mathcal{P}\bigl(( \Vj \cdot \nabla)\Vj \bigr)-\sifpoierjsodfgupoefasdfgjsdfgjsdfgjsdjflxncvzxnvasdjfaopsruosihjsdlghajsdflahgfsif_{j}\sifpoierjsodfgupoefasdfgjsdfgjsdfgjsdjflxncvzxnvasdjfaopsruosihjsdlghajsdflahgfsif_{j-1}\sifpoierjsodfgupoefasdfgjsdfgjsdfgjsdjflxncvzxnvasdjfaopsruosihjsdkghajsdflahgfsif_{j}\sifpoierjsodfgupoefasdfgjsdfgjsdfgjsdjflxncvzxnvasdjfaopsruosihjsdkghajsdflahgfsif_{j-1} \mathcal{P} \bigl( ( \Vjm \cdot \nabla)\Vjm \bigr) \\&\indeq\indeq + \sifpoierjsodfgupoefasdfgjsdfgjsdfgjsdjflxncvzxnvasdjfaopsruosihjsdlghajsdflahgfsif_{j+1}\sifpoierjsodfgupoefasdfgjsdfgjsdfgjsdjflxncvzxnvasdjfaopsruosihjsdlghajsdflahgfsif_{j}\sifpoierjsodfgupoefasdfgjsdfgjsdfgjsdjflxncvzxnvasdjfaopsruosihjsdkghajsdflahgfsif_{j+1}\sifpoierjsodfgupoefasdfgjsdfgjsdfgjsdjflxncvzxnvasdjfaopsruosihjsdkghajsdflahgfsif_{j} \zeta \mathcal{P}\bigl( w\cdot \nabla \Vj\bigr)- \sifpoierjsodfgupoefasdfgjsdfgjsdfgjsdjflxncvzxnvasdjfaopsruosihjsdlghajsdflahgfsif_j\sifpoierjsodfgupoefasdfgjsdfgjsdfgjsdjflxncvzxnvasdjfaopsruosihjsdlghajsdflahgfsif_{j-1}\sifpoierjsodfgupoefasdfgjsdfgjsdfgjsdjflxncvzxnvasdjfaopsruosihjsdkghajsdflahgfsif_{j}\sifpoierjsodfgupoefasdfgjsdfgjsdfgjsdjflxncvzxnvasdjfaopsruosihjsdkghajsdflahgfsif_{j-1} \zeta \mathcal{P}\bigl( w\cdot \nabla \Vjm\bigr) \\&\indeq\indeq +\sifpoierjsodfgupoefasdfgjsdfgjsdfgjsdjflxncvzxnvasdjfaopsruosihjsdlghajsdflahgfsif_{j+1}\sifpoierjsodfgupoefasdfgjsdfgjsdfgjsdjflxncvzxnvasdjfaopsruosihjsdlghajsdflahgfsif_{j}\sifpoierjsodfgupoefasdfgjsdfgjsdfgjsdjflxncvzxnvasdjfaopsruosihjsdkghajsdflahgfsif_{j+1}\sifpoierjsodfgupoefasdfgjsdfgjsdfgjsdjflxncvzxnvasdjfaopsruosihjsdkghajsdflahgfsif_{j} \zeta \mathcal{P}\bigl( \Vj\cdot \nabla w\bigr) - \sifpoierjsodfgupoefasdfgjsdfgjsdfgjsdjflxncvzxnvasdjfaopsruosihjsdlghajsdflahgfsif_j\sifpoierjsodfgupoefasdfgjsdfgjsdfgjsdjflxncvzxnvasdjfaopsruosihjsdlghajsdflahgfsif_{j-1}\sifpoierjsodfgupoefasdfgjsdfgjsdfgjsdjflxncvzxnvasdjfaopsruosihjsdkghajsdflahgfsif_{j}\sifpoierjsodfgupoefasdfgjsdfgjsdfgjsdjflxncvzxnvasdjfaopsruosihjsdkghajsdflahgfsif_{j-1} \zeta \mathcal{P}\bigl( \Vjm\cdot \nabla w\bigr) \\&\indeq =\Bigl( \sifpoierjsodfgupoefasdfgjsdfgjsdfgjsdjflxncvzxnvasdjfaopsruosihjsdlghajsdflahgfsif_{j+1}\sifpoierjsodfgupoefasdfgjsdfgjsdfgjsdjflxncvzxnvasdjfaopsruosihjsdlghajsdflahgfsif_{j}\sifpoierjsodfgupoefasdfgjsdfgjsdfgjsdjflxncvzxnvasdjfaopsruosihjsdkghajsdflahgfsif_{j+1}\sifpoierjsodfgupoefasdfgjsdfgjsdfgjsdjflxncvzxnvasdjfaopsruosihjsdkghajsdflahgfsif_{j} \zeta \big( \sigma(t, \Vj+w) - \sigma(t, w) \big) - \sifpoierjsodfgupoefasdfgjsdfgjsdfgjsdjflxncvzxnvasdjfaopsruosihjsdlghajsdflahgfsif_j\sifpoierjsodfgupoefasdfgjsdfgjsdfgjsdjflxncvzxnvasdjfaopsruosihjsdlghajsdflahgfsif_{j-1}\sifpoierjsodfgupoefasdfgjsdfgjsdfgjsdjflxncvzxnvasdjfaopsruosihjsdkghajsdflahgfsif_{j}\sifpoierjsodfgupoefasdfgjsdfgjsdfgjsdjflxncvzxnvasdjfaopsruosihjsdkghajsdflahgfsif_{j-1}\zeta \big( \sigma(t, \Vjm+w) - \sigma(t, w) \big) \Bigr)\dot{\WW}(t), \\ & \nabla\cdot \zj = 0, \\& \zj( 0)= {0}   \Pas \comma t\in [0,t^{\ast}] \commaone j=0,1,2,\ldots. \end{split} \llabel{8zJXtvmvPvAr3LSWDjbVPN7eNu20r8Bw2ivnkzMda93zWWiUBHwQzahUiji2TrXI8v2HNShbTKLeKW83WrQKO4TZm57yzoVYZJytSg2Wx4YafTHAxS7kacIPQJGYdDk0531u2QIKfREWYcMKMUT7fdT9EkIfUJ3pMW59QLFmu02YHJaa2Er6KSIwTBGDJYZwvfSJQby7fdFWdfT9zU27ws5oU5MUTDJzKFNojdXRyBaYybTvnhh2dV77oFFlt4H0RNZjVJ5BJpyIqAOWWcefdR27nGkjmoEFHjanXf1ONEcytoINtD90ONandawDRKi2DJzAqYHGCTB0pzdBa3OotPq1QVFvaYNTVz2sZJ6eyIg2N7PgilKLF9NzcrhuLeCeXwb6cMFExflJSE8Ev9WHgQ1Brp7ROMACwvAnATqGZHwkdHA5fbABXo6EWHsoW6HQYvvjcZgRkOWAbVA0zBfBaWwlIV05Z6E2JQjOeHcZGJuq90ac5Jh9h0rLKfIHtl8tPrtRdqql8TZGUgdNySBHoNrQCsxtgzuGAwHvyNxpMmwKQuJFKjtZr6Y4HdmrCbnF52gA0328aVuzEbplXZd7EJEEC939HQthaMsupTcxVaZ32pPdbPIj2x8AzxjYXSq8LsofqmgSqjm8G4wUbQ28LuAabwI0cFWNfGnzpVzsUeHsL9zoBLlg5jXQXnR0giRmCLErqlDIPYeYXduUJE0BsbkKbjpdcPLiek8NWrIjsfapHh4GYvMFbA67qyex7sHgHGEQ41} \end{align} Componentwise, we may rewrite the first equation as \begin{align} \begin{split} &\partial_t \zj_m -\Delta \zj_m = \sum_{i}\partial_{i} f_{im}  + g_m \dot{\WW}(t) \comma m=1,2,3 , \end{split} \llabel{3GlW0y1WD35mIo5gEUbObrbknjgUQyko7g2yrEOfovQfAk6UVDHGl7GV3LvQmradEUOJpuuztBBnrmefilt1sGSf5O0aw2Dc0hRaHGalEqIpfgPyNQoLHp2LAIUp77FygrjC8qBbuxBkYX8NTmUvyT7YnBgv5K7vq5NefB5ye4TMuCfmE2JF7hgqwI7dmNx2CqZuLFthzIlB1sjKA8WGDKcDKvabk9yp28TFP0rg0iA9CBD36c8HLkZnO2S6ZoafvLXb8gopYa085EMRbAbQjGturIXlTE0Gz0tYSVUseCjDvrQ2bvfiIJCdfCAcWyIO7mlycs5RjioIZt7qyB7pL9pyG8XDTzJxHs0yhVVAr8ZQRqsZCHHADFTwvJHeHOGvLJHuTfNa5j12ZkTvGqOyS8826D2rj7rHDTLN7Ggmt9Mzcygwxnj4JJeQb7eMmwRnSuZLU8qUNDLrdgC70bhEPgpb7zk5a32N1IbJhf8XvGRmUFdvIUkwPFbidJPLlNGe1RQRsK2dVNPM7A3YhdhB1R6N5MJi5S4R498lwY9I8RHxQKLlAk8W3Ts7WFUoNwI9KWnztPxrZLvNwZ28EYOnoufxz6ip9aSWnNQASriwYC1sOtSqXzot8k4KOz78LG6GMNCExoMh9wl5vbsmnnq6Hg6WToJun74JxyNBXyVpvxNB0N8wymK3reReEzFxbK92xELs950SNgLmviRC1bFHjDCke3SgtUdC4cONb4EF24D1VDBHlWATyswjyDOWibTHqXEQ42} \end{align} where  \begin{align} \begin{split} f_{im} &=  -\sifpoierjsodfgupoefasdfgjsdfgjsdfgjsdjflxncvzxnvasdjfaopsruosihjsdlghajsdflahgfsif_{j+1}\sifpoierjsodfgupoefasdfgjsdfgjsdfgjsdjflxncvzxnvasdjfaopsruosihjsdlghajsdflahgfsif_{j}\sifpoierjsodfgupoefasdfgjsdfgjsdfgjsdjflxncvzxnvasdjfaopsruosihjsdkghajsdflahgfsif_{j+1}\sifpoierjsodfgupoefasdfgjsdfgjsdfgjsdjflxncvzxnvasdjfaopsruosihjsdkghajsdflahgfsif_{j} \bigl(\mathcal{P}( \Vj_i \Vj) \bigr)_m+\sifpoierjsodfgupoefasdfgjsdfgjsdfgjsdjflxncvzxnvasdjfaopsruosihjsdlghajsdflahgfsif_{j}\sifpoierjsodfgupoefasdfgjsdfgjsdfgjsdjflxncvzxnvasdjfaopsruosihjsdlghajsdflahgfsif_{j-1} \sifpoierjsodfgupoefasdfgjsdfgjsdfgjsdjflxncvzxnvasdjfaopsruosihjsdkghajsdflahgfsif_{j}\sifpoierjsodfgupoefasdfgjsdfgjsdfgjsdjflxncvzxnvasdjfaopsruosihjsdkghajsdflahgfsif_{j-1} \bigl(\mathcal{P} ( \Vjm_i \Vjm) \bigr)_m \\&\indeq - \sifpoierjsodfgupoefasdfgjsdfgjsdfgjsdjflxncvzxnvasdjfaopsruosihjsdlghajsdflahgfsif_{j+1}\sifpoierjsodfgupoefasdfgjsdfgjsdfgjsdjflxncvzxnvasdjfaopsruosihjsdlghajsdflahgfsif_{j}\sifpoierjsodfgupoefasdfgjsdfgjsdfgjsdjflxncvzxnvasdjfaopsruosihjsdkghajsdflahgfsif_{j+1}\sifpoierjsodfgupoefasdfgjsdfgjsdfgjsdjflxncvzxnvasdjfaopsruosihjsdkghajsdflahgfsif_{j} \zeta \bigl(\mathcal{P} (w_i \Vj)\bigr)_m + \sifpoierjsodfgupoefasdfgjsdfgjsdfgjsdjflxncvzxnvasdjfaopsruosihjsdlghajsdflahgfsif_j\sifpoierjsodfgupoefasdfgjsdfgjsdfgjsdjflxncvzxnvasdjfaopsruosihjsdlghajsdflahgfsif_{j-1}\sifpoierjsodfgupoefasdfgjsdfgjsdfgjsdjflxncvzxnvasdjfaopsruosihjsdkghajsdflahgfsif_{j}\sifpoierjsodfgupoefasdfgjsdfgjsdfgjsdjflxncvzxnvasdjfaopsruosihjsdkghajsdflahgfsif_{j-1} \zeta \bigl( \mathcal{P}(w_i \Vjm)\bigr)_m \\&\indeq -\sifpoierjsodfgupoefasdfgjsdfgjsdfgjsdjflxncvzxnvasdjfaopsruosihjsdlghajsdflahgfsif_{j+1}\sifpoierjsodfgupoefasdfgjsdfgjsdfgjsdjflxncvzxnvasdjfaopsruosihjsdlghajsdflahgfsif_{j}\sifpoierjsodfgupoefasdfgjsdfgjsdfgjsdjflxncvzxnvasdjfaopsruosihjsdkghajsdflahgfsif_{j+1}\sifpoierjsodfgupoefasdfgjsdfgjsdfgjsdjflxncvzxnvasdjfaopsruosihjsdkghajsdflahgfsif_{j} \zeta \bigl( \mathcal{P}(\Vj_i w)\bigr)_m + \sifpoierjsodfgupoefasdfgjsdfgjsdfgjsdjflxncvzxnvasdjfaopsruosihjsdlghajsdflahgfsif_j\sifpoierjsodfgupoefasdfgjsdfgjsdfgjsdjflxncvzxnvasdjfaopsruosihjsdlghajsdflahgfsif_{j-1}\sifpoierjsodfgupoefasdfgjsdfgjsdfgjsdjflxncvzxnvasdjfaopsruosihjsdkghajsdflahgfsif_{j}\sifpoierjsodfgupoefasdfgjsdfgjsdfgjsdjflxncvzxnvasdjfaopsruosihjsdkghajsdflahgfsif_{j-1} \zeta \bigl(\mathcal{P} (\Vjm_i w)\bigr)_m \comma m=1,2,3, \end{split}    \llabel{t3aG6mkfGJVWv40lexPnIcy5ckRMD3owVBdxQm6CvLaAgxiJtEsSlZFwDoYP2nRYbCdXRz5HboVTU8NPgNViWeXGVQZ7bjOy1LRy9faj9n2iE1S0mci0YD3HgUxzLatb92MhCpZKLJqHTSFRMn3KVkpcFLUcF0X66ivdq01cVqkoQqu1u2Cpip5EV7AgMORcfZjLx7Lcv9lXn6rS8WeK3zTLDPB61JVWwMiKEuUZZ4qiK1iQ8N0832TS4eLW4zeUyonzTSofna74RQVKiu9W3kEa3gH8xdiOhAcHsIQCsEt0Qi2IHw9vq9rNPlh1y3wORqrJcxU4i55ZHTOoGP0zEqlB3lkwGGRn7TOoKfGZu5BczGKFeoyIBtjNb8xfQEKduOnJVOZh8PUVaRonXBkIjBT9WWor7A3WfXxA2f2VlXZS1Ttsab4n6R3BKX0XJTmlkVtcWTMCsiFVyjfcrzeJk5MBxwR7zzVOnjlLzUz5uLeqWjDul7OnYICGG9iRybTsYJXfrRnub3p16JBQd0zQOkKZK6DeVgpXRceOExLY3WKrXYyIe7dqMqanCCTjFW71LQ89mQw1gAswnYSMeWlHz7ud7xBwxF3m8usa66yr0nSdsYwuqwXdD0fRjFpeLOe0rcsIuMGrSOqREW5plybq3rFrk7YmLURUSSVYGruD6ksnLXBkvVS2q0ljMPpIL27QdZMUPbaOoLqt3bhn6RX9hPAdQRp9PI4fBkJ8uILIArpTl4E6jrUYwuFXiFYaDVvrDbEQ43} \end{align} i.e., \begin{align} \begin{split} f_{im} &=  - \sifpoierjsodfgupoefasdfgjsdfgjsdfgjsdjflxncvzxnvasdjfaopsruosihjsdlghajsdflahgfsif_{j} (\sifpoierjsodfgupoefasdfgjsdfgjsdfgjsdjflxncvzxnvasdjfaopsruosihjsdlghajsdflahgfsif_{j+1}-\sifpoierjsodfgupoefasdfgjsdfgjsdfgjsdjflxncvzxnvasdjfaopsruosihjsdlghajsdflahgfsif_{j}) \sifpoierjsodfgupoefasdfgjsdfgjsdfgjsdjflxncvzxnvasdjfaopsruosihjsdkghajsdflahgfsif_{j+1}\sifpoierjsodfgupoefasdfgjsdfgjsdfgjsdjflxncvzxnvasdjfaopsruosihjsdkghajsdflahgfsif_{j}\bigl(\mathcal{P}( \Vj_i \Vj) \bigr)_m - (\sifpoierjsodfgupoefasdfgjsdfgjsdfgjsdjflxncvzxnvasdjfaopsruosihjsdlghajsdflahgfsif_{j})^2 (\sifpoierjsodfgupoefasdfgjsdfgjsdfgjsdjflxncvzxnvasdjfaopsruosihjsdkghajsdflahgfsif_{j+1}-\sifpoierjsodfgupoefasdfgjsdfgjsdfgjsdjflxncvzxnvasdjfaopsruosihjsdkghajsdflahgfsif_{j}) \sifpoierjsodfgupoefasdfgjsdfgjsdfgjsdjflxncvzxnvasdjfaopsruosihjsdkghajsdflahgfsif_{j}\bigl(\mathcal{P}( \Vj_i \Vj) \bigr)_m \\&\indeq -\sifpoierjsodfgupoefasdfgjsdfgjsdfgjsdjflxncvzxnvasdjfaopsruosihjsdlghajsdflahgfsif_{j} (\sifpoierjsodfgupoefasdfgjsdfgjsdfgjsdjflxncvzxnvasdjfaopsruosihjsdlghajsdflahgfsif_{j}-\sifpoierjsodfgupoefasdfgjsdfgjsdfgjsdjflxncvzxnvasdjfaopsruosihjsdlghajsdflahgfsif_{j-1}) (\sifpoierjsodfgupoefasdfgjsdfgjsdfgjsdjflxncvzxnvasdjfaopsruosihjsdkghajsdflahgfsif_{j})^2\bigl(\mathcal{P}( \Vj_i \Vj) \bigr)_m -\sifpoierjsodfgupoefasdfgjsdfgjsdfgjsdjflxncvzxnvasdjfaopsruosihjsdlghajsdflahgfsif_{j}\sifpoierjsodfgupoefasdfgjsdfgjsdfgjsdjflxncvzxnvasdjfaopsruosihjsdlghajsdflahgfsif_{j-1}\sifpoierjsodfgupoefasdfgjsdfgjsdfgjsdjflxncvzxnvasdjfaopsruosihjsdkghajsdflahgfsif_{j} (\sifpoierjsodfgupoefasdfgjsdfgjsdfgjsdjflxncvzxnvasdjfaopsruosihjsdkghajsdflahgfsif_{j}-\sifpoierjsodfgupoefasdfgjsdfgjsdfgjsdjflxncvzxnvasdjfaopsruosihjsdkghajsdflahgfsif_{j-1}) \bigl(\mathcal{P}( \Vj_i \Vj) \bigr)_m \\&\indeq -\sifpoierjsodfgupoefasdfgjsdfgjsdfgjsdjflxncvzxnvasdjfaopsruosihjsdlghajsdflahgfsif_{j} \sifpoierjsodfgupoefasdfgjsdfgjsdfgjsdjflxncvzxnvasdjfaopsruosihjsdlghajsdflahgfsif_{j-1}\sifpoierjsodfgupoefasdfgjsdfgjsdfgjsdjflxncvzxnvasdjfaopsruosihjsdkghajsdflahgfsif_{j}\sifpoierjsodfgupoefasdfgjsdfgjsdfgjsdjflxncvzxnvasdjfaopsruosihjsdkghajsdflahgfsif_{j-1} \bigl(\mathcal{P}( \zjm_i \Vj) \bigr)_m -\sifpoierjsodfgupoefasdfgjsdfgjsdfgjsdjflxncvzxnvasdjfaopsruosihjsdlghajsdflahgfsif_{j} \sifpoierjsodfgupoefasdfgjsdfgjsdfgjsdjflxncvzxnvasdjfaopsruosihjsdlghajsdflahgfsif_{j-1}\sifpoierjsodfgupoefasdfgjsdfgjsdfgjsdjflxncvzxnvasdjfaopsruosihjsdkghajsdflahgfsif_{j}\sifpoierjsodfgupoefasdfgjsdfgjsdfgjsdjflxncvzxnvasdjfaopsruosihjsdkghajsdflahgfsif_{j-1} \bigl(\mathcal{P}( \Vjm_i \zjm) \bigr)_m \\&\indeq - \sifpoierjsodfgupoefasdfgjsdfgjsdfgjsdjflxncvzxnvasdjfaopsruosihjsdlghajsdflahgfsif_{j} (\sifpoierjsodfgupoefasdfgjsdfgjsdfgjsdjflxncvzxnvasdjfaopsruosihjsdlghajsdflahgfsif_{j+1}-\sifpoierjsodfgupoefasdfgjsdfgjsdfgjsdjflxncvzxnvasdjfaopsruosihjsdlghajsdflahgfsif_{j})\sifpoierjsodfgupoefasdfgjsdfgjsdfgjsdjflxncvzxnvasdjfaopsruosihjsdkghajsdflahgfsif_{j+1}\sifpoierjsodfgupoefasdfgjsdfgjsdfgjsdjflxncvzxnvasdjfaopsruosihjsdkghajsdflahgfsif_{j}\zeta \bigl(\mathcal{P}( w_i \Vj) \bigr)_m - (\sifpoierjsodfgupoefasdfgjsdfgjsdfgjsdjflxncvzxnvasdjfaopsruosihjsdlghajsdflahgfsif_{j})^2 (\sifpoierjsodfgupoefasdfgjsdfgjsdfgjsdjflxncvzxnvasdjfaopsruosihjsdkghajsdflahgfsif_{j+1}-\sifpoierjsodfgupoefasdfgjsdfgjsdfgjsdjflxncvzxnvasdjfaopsruosihjsdkghajsdflahgfsif_{j})\sifpoierjsodfgupoefasdfgjsdfgjsdfgjsdjflxncvzxnvasdjfaopsruosihjsdkghajsdflahgfsif_{j}\zeta \bigl(\mathcal{P}( w_i \Vj) \bigr)_m \\&\indeq - \sifpoierjsodfgupoefasdfgjsdfgjsdfgjsdjflxncvzxnvasdjfaopsruosihjsdlghajsdflahgfsif_{j} (\sifpoierjsodfgupoefasdfgjsdfgjsdfgjsdjflxncvzxnvasdjfaopsruosihjsdlghajsdflahgfsif_{j}-\sifpoierjsodfgupoefasdfgjsdfgjsdfgjsdjflxncvzxnvasdjfaopsruosihjsdlghajsdflahgfsif_{j-1})(\sifpoierjsodfgupoefasdfgjsdfgjsdfgjsdjflxncvzxnvasdjfaopsruosihjsdkghajsdflahgfsif_{j})^2\zeta \bigl(\mathcal{P}( w_i \Vj) \bigr)_m - \sifpoierjsodfgupoefasdfgjsdfgjsdfgjsdjflxncvzxnvasdjfaopsruosihjsdlghajsdflahgfsif_{j}\sifpoierjsodfgupoefasdfgjsdfgjsdfgjsdjflxncvzxnvasdjfaopsruosihjsdlghajsdflahgfsif_{j-1} (\sifpoierjsodfgupoefasdfgjsdfgjsdfgjsdjflxncvzxnvasdjfaopsruosihjsdkghajsdflahgfsif_{j}-\sifpoierjsodfgupoefasdfgjsdfgjsdfgjsdjflxncvzxnvasdjfaopsruosihjsdkghajsdflahgfsif_{j-1})\sifpoierjsodfgupoefasdfgjsdfgjsdfgjsdjflxncvzxnvasdjfaopsruosihjsdkghajsdflahgfsif_{j}\zeta \bigl(\mathcal{P}( w_i \Vj) \bigr)_m \\&\indeq - \sifpoierjsodfgupoefasdfgjsdfgjsdfgjsdjflxncvzxnvasdjfaopsruosihjsdlghajsdflahgfsif_{j}\sifpoierjsodfgupoefasdfgjsdfgjsdfgjsdjflxncvzxnvasdjfaopsruosihjsdlghajsdflahgfsif_{j-1}\sifpoierjsodfgupoefasdfgjsdfgjsdfgjsdjflxncvzxnvasdjfaopsruosihjsdkghajsdflahgfsif_{j}\sifpoierjsodfgupoefasdfgjsdfgjsdfgjsdjflxncvzxnvasdjfaopsruosihjsdkghajsdflahgfsif_{j-1}\zeta\bigl(\mathcal{P}( w_i \zjm) \bigr)_m - \sifpoierjsodfgupoefasdfgjsdfgjsdfgjsdjflxncvzxnvasdjfaopsruosihjsdlghajsdflahgfsif_{j} (\sifpoierjsodfgupoefasdfgjsdfgjsdfgjsdjflxncvzxnvasdjfaopsruosihjsdlghajsdflahgfsif_{j+1}-\sifpoierjsodfgupoefasdfgjsdfgjsdfgjsdjflxncvzxnvasdjfaopsruosihjsdlghajsdflahgfsif_{j})\sifpoierjsodfgupoefasdfgjsdfgjsdfgjsdjflxncvzxnvasdjfaopsruosihjsdkghajsdflahgfsif_{j+1}\sifpoierjsodfgupoefasdfgjsdfgjsdfgjsdjflxncvzxnvasdjfaopsruosihjsdkghajsdflahgfsif_{j}\zeta \bigl(\mathcal{P}( \Vj_i w) \bigr)_m \\&\indeq -(\sifpoierjsodfgupoefasdfgjsdfgjsdfgjsdjflxncvzxnvasdjfaopsruosihjsdlghajsdflahgfsif_{j})^2 (\sifpoierjsodfgupoefasdfgjsdfgjsdfgjsdjflxncvzxnvasdjfaopsruosihjsdkghajsdflahgfsif_{j+1}-\sifpoierjsodfgupoefasdfgjsdfgjsdfgjsdjflxncvzxnvasdjfaopsruosihjsdkghajsdflahgfsif_{j})\sifpoierjsodfgupoefasdfgjsdfgjsdfgjsdjflxncvzxnvasdjfaopsruosihjsdkghajsdflahgfsif_{j}\zeta  \bigl(\mathcal{P}( \Vj_i w) \bigr)_m -\sifpoierjsodfgupoefasdfgjsdfgjsdfgjsdjflxncvzxnvasdjfaopsruosihjsdlghajsdflahgfsif_{j} (\sifpoierjsodfgupoefasdfgjsdfgjsdfgjsdjflxncvzxnvasdjfaopsruosihjsdlghajsdflahgfsif_{j}-\sifpoierjsodfgupoefasdfgjsdfgjsdfgjsdjflxncvzxnvasdjfaopsruosihjsdlghajsdflahgfsif_{j-1})(\sifpoierjsodfgupoefasdfgjsdfgjsdfgjsdjflxncvzxnvasdjfaopsruosihjsdkghajsdflahgfsif_{j})^2 \zeta \bigl(\mathcal{P}( \Vj_i w) \bigr)_m \\&\indeq -\sifpoierjsodfgupoefasdfgjsdfgjsdfgjsdjflxncvzxnvasdjfaopsruosihjsdlghajsdflahgfsif_{j}\sifpoierjsodfgupoefasdfgjsdfgjsdfgjsdjflxncvzxnvasdjfaopsruosihjsdlghajsdflahgfsif_{j-1} (\sifpoierjsodfgupoefasdfgjsdfgjsdfgjsdjflxncvzxnvasdjfaopsruosihjsdkghajsdflahgfsif_{j}-\sifpoierjsodfgupoefasdfgjsdfgjsdfgjsdjflxncvzxnvasdjfaopsruosihjsdkghajsdflahgfsif_{j-1})\sifpoierjsodfgupoefasdfgjsdfgjsdfgjsdjflxncvzxnvasdjfaopsruosihjsdkghajsdflahgfsif_{j}\zeta \bigl(\mathcal{P}( \Vj_i w) \bigr)_m -\sifpoierjsodfgupoefasdfgjsdfgjsdfgjsdjflxncvzxnvasdjfaopsruosihjsdlghajsdflahgfsif_{j} \sifpoierjsodfgupoefasdfgjsdfgjsdfgjsdjflxncvzxnvasdjfaopsruosihjsdlghajsdflahgfsif_{j-1}\sifpoierjsodfgupoefasdfgjsdfgjsdfgjsdjflxncvzxnvasdjfaopsruosihjsdkghajsdflahgfsif_{j}\sifpoierjsodfgupoefasdfgjsdfgjsdfgjsdjflxncvzxnvasdjfaopsruosihjsdkghajsdflahgfsif_{j-1}\zeta \bigl(\mathcal{P}( \zjm_i w) \bigr)_m \comma m=1,2,3, \end{split} \llabel{2zVpvGg6zFYojSbMBhr4pW8OwDNUao2mhDTScei90KrsmwaBnNUsHe6RpIq1hXFNPm0iVsnGkbCJr8Vmegl416tU2nnollOtcFUM7c4GC8ClaslJ0N8XfCuRaR2sYefjVriJNj1f2tyvqJyQNX1FYmTl5N17tkbBTPuF471AH0Fo71REILJp4VsqiWTTtkAd5RkkJH3RiRNKePesR0xqFqnQjGUIniVgLGCl2He7kmqhEV4PFdCdGpEP9nBmcvZ0pLYGidfn65qEuDfMz2vcq4DMzN6mBFRtQP0yDDFxjuZiZPE3Jj4hVc2zrrcROnFPeOP1pZgnsHAMRK4ETNF23KtfGem2kr5gf5u8NcuwfJCav6SvQ2n18P8RcIkmMSD0wrVR1PYx7kEkZJsJ7Wb6XIWDE0UnqtZPAqEETS3EqNNf38DEk6NhXV9c3sevM32WACSj3eNXuq9GhPOPChd7v1T6gqRinehWk8wLoaawHVvbU4902yObCT6zm2aNf8xUwPOilrR3v8RcNWEk7EvIAI8okPAYxPiUlZ4mwzsJo6ruPmYN6tylDEeeoTmlBKmnVuBB7HnU7qKn353SndtoL82gDifcmjLhHx3gi0akymhuaFTzRnMibFGU5W5x6510NKi85u8JTLYcbfOMn0auD0tvNHwSAWzE3HWcYTId2HhXMLiGiykAjHCnRX4uJJlctQ3yLoqi9ju7Kj84EFU49udeA93xZfZCBW4bSKpycf6nncmvnhKb0HjuKWp6b88pGCEQ44} \end{align} and  \begin{align} \begin{split} g_m  &= \sifpoierjsodfgupoefasdfgjsdfgjsdfgjsdjflxncvzxnvasdjfaopsruosihjsdlghajsdflahgfsif_{j+1} \sifpoierjsodfgupoefasdfgjsdfgjsdfgjsdjflxncvzxnvasdjfaopsruosihjsdlghajsdflahgfsif_{j} \sifpoierjsodfgupoefasdfgjsdfgjsdfgjsdjflxncvzxnvasdjfaopsruosihjsdkghajsdflahgfsif_{j+1}\sifpoierjsodfgupoefasdfgjsdfgjsdfgjsdjflxncvzxnvasdjfaopsruosihjsdkghajsdflahgfsif_{j}\zeta  \big(  \sigma_m(t, \Vj+w)  -  \sigma_m(t, w)  \big)  \\&\indeq  -
 \sifpoierjsodfgupoefasdfgjsdfgjsdfgjsdjflxncvzxnvasdjfaopsruosihjsdlghajsdflahgfsif_{j} \sifpoierjsodfgupoefasdfgjsdfgjsdfgjsdjflxncvzxnvasdjfaopsruosihjsdlghajsdflahgfsif_{j-1}\sifpoierjsodfgupoefasdfgjsdfgjsdfgjsdjflxncvzxnvasdjfaopsruosihjsdkghajsdflahgfsif_{j}\sifpoierjsodfgupoefasdfgjsdfgjsdfgjsdjflxncvzxnvasdjfaopsruosihjsdkghajsdflahgfsif_{j-1} \zeta   \big(  \sigma_m(t, \Vjm+w)  -  \sigma_m(t, w)  \big) \\& = \sifpoierjsodfgupoefasdfgjsdfgjsdfgjsdjflxncvzxnvasdjfaopsruosihjsdlghajsdflahgfsif_{j} (\sifpoierjsodfgupoefasdfgjsdfgjsdfgjsdjflxncvzxnvasdjfaopsruosihjsdlghajsdflahgfsif_{j+1}-\sifpoierjsodfgupoefasdfgjsdfgjsdfgjsdjflxncvzxnvasdjfaopsruosihjsdlghajsdflahgfsif_{j})\sifpoierjsodfgupoefasdfgjsdfgjsdfgjsdjflxncvzxnvasdjfaopsruosihjsdkghajsdflahgfsif_{j+1}\sifpoierjsodfgupoefasdfgjsdfgjsdfgjsdjflxncvzxnvasdjfaopsruosihjsdkghajsdflahgfsif_{j}\zeta \big( \sigma_m(t, \Vj+w) - \sigma_m(t, w) \big) \\&\indeq + (\sifpoierjsodfgupoefasdfgjsdfgjsdfgjsdjflxncvzxnvasdjfaopsruosihjsdlghajsdflahgfsif_{j})^2 (\sifpoierjsodfgupoefasdfgjsdfgjsdfgjsdjflxncvzxnvasdjfaopsruosihjsdkghajsdflahgfsif_{j+1}-\sifpoierjsodfgupoefasdfgjsdfgjsdfgjsdjflxncvzxnvasdjfaopsruosihjsdkghajsdflahgfsif_{j})\sifpoierjsodfgupoefasdfgjsdfgjsdfgjsdjflxncvzxnvasdjfaopsruosihjsdkghajsdflahgfsif_{j}\zeta \big( \sigma_m(t, \Vj+w) - \sigma_m(t, w) \big) \\&\indeq + \sifpoierjsodfgupoefasdfgjsdfgjsdfgjsdjflxncvzxnvasdjfaopsruosihjsdlghajsdflahgfsif_{j} (\sifpoierjsodfgupoefasdfgjsdfgjsdfgjsdjflxncvzxnvasdjfaopsruosihjsdlghajsdflahgfsif_{j}-\sifpoierjsodfgupoefasdfgjsdfgjsdfgjsdjflxncvzxnvasdjfaopsruosihjsdlghajsdflahgfsif_{j-1}) (\sifpoierjsodfgupoefasdfgjsdfgjsdfgjsdjflxncvzxnvasdjfaopsruosihjsdkghajsdflahgfsif_{j})^2\zeta \big( \sigma_m(t, \Vj+w) - \sigma_m(t, w) \big) \\&\indeq +\sifpoierjsodfgupoefasdfgjsdfgjsdfgjsdjflxncvzxnvasdjfaopsruosihjsdlghajsdflahgfsif_{j} \sifpoierjsodfgupoefasdfgjsdfgjsdfgjsdjflxncvzxnvasdjfaopsruosihjsdlghajsdflahgfsif_{j-1}(\sifpoierjsodfgupoefasdfgjsdfgjsdfgjsdjflxncvzxnvasdjfaopsruosihjsdkghajsdflahgfsif_{j}-\sifpoierjsodfgupoefasdfgjsdfgjsdfgjsdjflxncvzxnvasdjfaopsruosihjsdkghajsdflahgfsif_{j-1}) \sifpoierjsodfgupoefasdfgjsdfgjsdfgjsdjflxncvzxnvasdjfaopsruosihjsdkghajsdflahgfsif_{j}\zeta \big( \sigma_m(t, \Vj+w) - \sigma_m(t, w) \big) \\&\indeq + \sifpoierjsodfgupoefasdfgjsdfgjsdfgjsdjflxncvzxnvasdjfaopsruosihjsdlghajsdflahgfsif_{j} \sifpoierjsodfgupoefasdfgjsdfgjsdfgjsdjflxncvzxnvasdjfaopsruosihjsdlghajsdflahgfsif_{j-1} \sifpoierjsodfgupoefasdfgjsdfgjsdfgjsdjflxncvzxnvasdjfaopsruosihjsdkghajsdflahgfsif_{j}\sifpoierjsodfgupoefasdfgjsdfgjsdfgjsdjflxncvzxnvasdjfaopsruosihjsdkghajsdflahgfsif_{j-1}\zeta \bigl(\sigma_m(t, \Vj+w )-\sigma_m(t, \Vjm+w )\bigr) \comma m=1,2,3. \end{split} \llabel{3U7kmCO1eY8jvEbu59zmGZsZh93NwvJYbkEgDpJBjgQeQUH9kCaz6ZGpcpgrHr79IeQvTIdp35mwWmafRgjDvXS7aFgmNIWmjvopqUuxFrBYmoa45jqkRgTBPPKLgoMLjiwIZ2I4F91C6x9aeW7Tq9CeM62kef7MUbovxWyxgIDcL8Xszu2pZTcbjaK0fKzEyznV0WFYxbFOZJYzBCXtQ4uxU96TnN0CGBhWEFZr60rIgw2f9x0fW3kUB4AOfctvL5I0ANOLdw7h8zK12STKy2ZdewoXYPZLVVvtraCxAJmN7MrmIarJtfTddDWE9At6mhMPCVNUOOSZYtGkPvxpsGeRguDvtWTHMHf3Vyr6W3xvcpi0z2wfwQ1DL1wHedTqXlyojGIQAdEEKv7Tak7cAilRfvrlm82NjNg9KDSvNoQiNhng2tnBSVwd8P4o3oLqrzPNHZmkQItfj61TcOQPJblsBYq3NulNfrConZ6kZ2VbZ0psQAaUCiMaoRpFWfviTxmeyzmc5QsEl1PNOZ4xotciInwc6IFbpwsMeXxy8lJ4A6OV0qRzrSt3PMbvRgOS5obkaFU9pOdMPdjFz1KRXRKDVUjveW3d9shi3jzKBTqZkeSXqbzboWTc5yRRMoBYQPCaeZ23HWk9xfdxJYxHYuNMNGY4XLVZoPUQxJAliDHOKycMAcTpGHIktjlIV25YYoRC74thSsJClD76yxM6BRhgfS0UH4wXVF0x1M6IbemsTKSWlsG9pk95kZSdHU31c5EQ45} \end{align}  Utilizing the cutoff functions and the assumptions~\eqref{EQ03}--\eqref{EQ04}, we obtain \begin{equation} \sadklfjsdfgsdfgsdfgsdfgdsfgsdfgadfasdf\biggl[\sifpoierjsodfgupoefasdfgjsdfgjsdfgjsdjflxncvzxnvasdjfaopsruosihjsdfghajsdflahgfsif_0^{t^{\ast}}\sifpoierjsodfgupoefasdfgjsdfgjsdfgjsdjflxncvzxnvasdjfaopsruosihjsdgghajsdflahgfsif  f \sifpoierjsodfgupoefasdfgjsdfgjsdfgjsdjflxncvzxnvasdjfaopsruosihjsdgghajsdflahgfsif_{3}^6 \,ds + \sifpoierjsodfgupoefasdfgjsdfgjsdfgjsdjflxncvzxnvasdjfaopsruosihjsdfghajsdflahgfsif_0^{t^{\ast}}\sifpoierjsodfgupoefasdfgjsdfgjsdfgjsdjflxncvzxnvasdjfaopsruosihjsdgghajsdflahgfsif g\sifpoierjsodfgupoefasdfgjsdfgjsdfgjsdjflxncvzxnvasdjfaopsruosihjsdgghajsdflahgfsif_{\mathbb{L}^6}^6 \,ds \biggr] \leq C_{k, \bar{\epsilon}} \,t^{\ast}\, \sadklfjsdfgsdfgsdfgsdfgdsfgsdfgadfasdf\biggl[\sup_{s\in[0,t^{\ast}]} \sifpoierjsodfgupoefasdfgjsdfgjsdfgjsdjflxncvzxnvasdjfaopsruosihjsdgghajsdflahgfsif \zjm\sifpoierjsodfgupoefasdfgjsdfgjsdfgjsdjflxncvzxnvasdjfaopsruosihjsdgghajsdflahgfsif_{6}^{6}  + \sup_{s\in[0,t^{\ast}]} \sifpoierjsodfgupoefasdfgjsdfgjsdfgjsdjflxncvzxnvasdjfaopsruosihjsdgghajsdflahgfsif \zj\sifpoierjsodfgupoefasdfgjsdfgjsdfgjsdjflxncvzxnvasdjfaopsruosihjsdgghajsdflahgfsif_{6}^{6}  \biggr],    \llabel{BpQeFx5za7hWPlLjDYdKH1pOkMo1Tvhxxz5FLLu71DUNeUXtDFC7CZ2473sjERebaYt2sEpV9wDJ8RGUqQmboXwJnHKFMpsXBvAsX8NYRZMwmZQctltsqofi8wxn6IW8jc68ANBwz8f4gWowkmZPWlwfKpM1fpdo0yTRIKHMDgTl3BUBWr6vHUzFZbqxnwKkdmJ3lXzIwkw7JkuJcCkgvFZ3lSo0ljVKu9Syby46zDjM6RXZIDPpHqEfkHt9SVnVtWdyYNwdmMm7SPwmqhO6FX8tzwYaMvjzpBSNJ1z368900v2i4y2wQjZhwwFUjq0UNmk8J8dOOG3QlDzp8AWpruu4D9VRlpVVzQQg1caEqevP0sFPHcwtKI3Z6nY79iQabga0i9mRVGbvlTAgV6PUV8EupPQ6xvGbcn7dQjV7Ckw57NPWUy9XnwF9elebZ8UYJDx3xCBYCIdPCE2D8eP90u49NY9Jxx9RI4Fea0QCjs5TLodJFphykczBwoe97PohTql1LMs37cKhsHO5jZxqpkHtLbFDnvfTxjiykLVhpwMqobqDM9A0f1n4i5SBc6trqVXwgQBEgH8lISLPLO52EUvi1myxknL0RBebO2YWw8Jhfo1lHlUMiesstdWw4aSWrYvOsn5Wn3wfwzHRHxFg0hKFuNVhjzXbg56HJ9VtUwalOXfT8oiFY1CsUCgCETCIvLR0AgThCs9TaZl6ver8hRtedkAUrkInSbcI8nyEjZsVOSztBbh7WjBgfaAFt4J6CTEQ46} \end{equation} where the constant $C_{k, \bar{\epsilon}}$ is positively correlated with $k$ and~$1/\bar{\epsilon}$. Then, we infer from Lemma~\ref{L01} that \eqref{EQ38} has a fixed point $v\in L^6_{\omega}L^{\infty}_tL^6_{x}$ on $[0,t^{\ast}]$ if $t^{\ast}$ is sufficiently small with respect to $C_{k, \bar{\epsilon}}$, and the rate of convergence is exponential. Showing that this fixed point of \eqref{EQ38} solves \eqref{EQ34} involves taking the limit in the identity  \begin{align} \begin{split} &(\Vj ( s),\varrho) = (v_{0},\varrho)+\sifpoierjsodfgupoefasdfgjsdfgjsdfgjsdjflxncvzxnvasdjfaopsruosihjsdfghajsdflahgfsif_0^s (\Vj (r),\Delta\varrho)\,dr +\sum_{i}\sifpoierjsodfgupoefasdfgjsdfgjsdfgjsdjflxncvzxnvasdjfaopsruosihjsdfghajsdflahgfsif_0^s \bigl(\sifpoierjsodfgupoefasdfgjsdfgjsdfgjsdjflxncvzxnvasdjfaopsruosihjsdlghajsdflahgfsif_j\sifpoierjsodfgupoefasdfgjsdfgjsdfgjsdjflxncvzxnvasdjfaopsruosihjsdlghajsdflahgfsif_{j-1}\sifpoierjsodfgupoefasdfgjsdfgjsdfgjsdjflxncvzxnvasdjfaopsruosihjsdkghajsdflahgfsif_j\sifpoierjsodfgupoefasdfgjsdfgjsdfgjsdjflxncvzxnvasdjfaopsruosihjsdkghajsdflahgfsif_{j-1} \mathcal{P}\bigl( \Vjm_i \Vjm \bigr),\partial_{i}\varrho\bigr)\,dr \\&\indeq + \sum_{i}\sifpoierjsodfgupoefasdfgjsdfgjsdfgjsdjflxncvzxnvasdjfaopsruosihjsdfghajsdflahgfsif_0^s \bigl(\sifpoierjsodfgupoefasdfgjsdfgjsdfgjsdjflxncvzxnvasdjfaopsruosihjsdlghajsdflahgfsif_j\sifpoierjsodfgupoefasdfgjsdfgjsdfgjsdjflxncvzxnvasdjfaopsruosihjsdlghajsdflahgfsif_{j-1}\sifpoierjsodfgupoefasdfgjsdfgjsdfgjsdjflxncvzxnvasdjfaopsruosihjsdkghajsdflahgfsif_j\sifpoierjsodfgupoefasdfgjsdfgjsdfgjsdjflxncvzxnvasdjfaopsruosihjsdkghajsdflahgfsif_{j-1}\zeta \mathcal{P}\bigl( w_i \Vjm \bigr),\partial_{i}\varrho\bigr)\,dr + \sum_{i}\sifpoierjsodfgupoefasdfgjsdfgjsdfgjsdjflxncvzxnvasdjfaopsruosihjsdfghajsdflahgfsif_0^s \bigl(\sifpoierjsodfgupoefasdfgjsdfgjsdfgjsdjflxncvzxnvasdjfaopsruosihjsdlghajsdflahgfsif_j\sifpoierjsodfgupoefasdfgjsdfgjsdfgjsdjflxncvzxnvasdjfaopsruosihjsdlghajsdflahgfsif_{j-1}\sifpoierjsodfgupoefasdfgjsdfgjsdfgjsdjflxncvzxnvasdjfaopsruosihjsdkghajsdflahgfsif_j\sifpoierjsodfgupoefasdfgjsdfgjsdfgjsdjflxncvzxnvasdjfaopsruosihjsdkghajsdflahgfsif_{j-1}\zeta \mathcal{P}\bigl( \Vjm_i w \bigr),\partial_{i}\varrho\bigr)\,dr \\&\indeq +\sifpoierjsodfgupoefasdfgjsdfgjsdfgjsdjflxncvzxnvasdjfaopsruosihjsdfghajsdflahgfsif_0^s \big(\sifpoierjsodfgupoefasdfgjsdfgjsdfgjsdjflxncvzxnvasdjfaopsruosihjsdlghajsdflahgfsif_j\sifpoierjsodfgupoefasdfgjsdfgjsdfgjsdjflxncvzxnvasdjfaopsruosihjsdlghajsdflahgfsif_{j-1}\sifpoierjsodfgupoefasdfgjsdfgjsdfgjsdjflxncvzxnvasdjfaopsruosihjsdkghajsdflahgfsif_j\sifpoierjsodfgupoefasdfgjsdfgjsdfgjsdjflxncvzxnvasdjfaopsruosihjsdkghajsdflahgfsif_{j-1} \zeta \sigma(r, \Vjm+w )-\sifpoierjsodfgupoefasdfgjsdfgjsdfgjsdjflxncvzxnvasdjfaopsruosihjsdlghajsdflahgfsif_j\sifpoierjsodfgupoefasdfgjsdfgjsdfgjsdjflxncvzxnvasdjfaopsruosihjsdlghajsdflahgfsif_{j-1}\sifpoierjsodfgupoefasdfgjsdfgjsdfgjsdjflxncvzxnvasdjfaopsruosihjsdkghajsdflahgfsif_j\sifpoierjsodfgupoefasdfgjsdfgjsdfgjsdjflxncvzxnvasdjfaopsruosihjsdkghajsdflahgfsif_{j-1}\zeta\sigma(r, w ), \varrho \big)\,d\WW_r \comma  (s,\omega)\text{-a.e.}, \end{split} \label{EQ47} \end{align} for all test functions $\varrho\in C^{\infty}(\TT^3)$. Using the exponential rate of convergence (see~\cite[Lemma 5.2]{KXZ}) and the Dominated Convergence Theorem, we conclude \begin{align} \begin{split} &\sifpoierjsodfgupoefasdfgjsdfgjsdfgjsdjflxncvzxnvasdjfaopsruosihjsdfghajsdflahgfsif_0^s (\Vj ,\Delta\varrho)\,dr +\sum_{i}\sifpoierjsodfgupoefasdfgjsdfgjsdfgjsdjflxncvzxnvasdjfaopsruosihjsdfghajsdflahgfsif_0^s \bigl(\sifpoierjsodfgupoefasdfgjsdfgjsdfgjsdjflxncvzxnvasdjfaopsruosihjsdlghajsdflahgfsif_j\sifpoierjsodfgupoefasdfgjsdfgjsdfgjsdjflxncvzxnvasdjfaopsruosihjsdlghajsdflahgfsif_{j-1}\sifpoierjsodfgupoefasdfgjsdfgjsdfgjsdjflxncvzxnvasdjfaopsruosihjsdkghajsdflahgfsif_j\sifpoierjsodfgupoefasdfgjsdfgjsdfgjsdjflxncvzxnvasdjfaopsruosihjsdkghajsdflahgfsif_{j-1} \mathcal{P}\bigl( \Vjm_i \Vjm \bigr),\partial_{i}\varrho\bigr)\,dr \\&\indeq + \sum_{i}\sifpoierjsodfgupoefasdfgjsdfgjsdfgjsdjflxncvzxnvasdjfaopsruosihjsdfghajsdflahgfsif_0^s \bigl(\sifpoierjsodfgupoefasdfgjsdfgjsdfgjsdjflxncvzxnvasdjfaopsruosihjsdlghajsdflahgfsif_j\sifpoierjsodfgupoefasdfgjsdfgjsdfgjsdjflxncvzxnvasdjfaopsruosihjsdlghajsdflahgfsif_{j-1}\sifpoierjsodfgupoefasdfgjsdfgjsdfgjsdjflxncvzxnvasdjfaopsruosihjsdkghajsdflahgfsif_j\sifpoierjsodfgupoefasdfgjsdfgjsdfgjsdjflxncvzxnvasdjfaopsruosihjsdkghajsdflahgfsif_{j-1}\zeta \mathcal{P}\bigl( w_i \Vjm \bigr),\partial_{i}\varrho\bigr)\,dr + \sum_{i}\sifpoierjsodfgupoefasdfgjsdfgjsdfgjsdjflxncvzxnvasdjfaopsruosihjsdfghajsdflahgfsif_0^s \bigl(\sifpoierjsodfgupoefasdfgjsdfgjsdfgjsdjflxncvzxnvasdjfaopsruosihjsdlghajsdflahgfsif_j\sifpoierjsodfgupoefasdfgjsdfgjsdfgjsdjflxncvzxnvasdjfaopsruosihjsdlghajsdflahgfsif_{j-1} \sifpoierjsodfgupoefasdfgjsdfgjsdfgjsdjflxncvzxnvasdjfaopsruosihjsdkghajsdflahgfsif_j\sifpoierjsodfgupoefasdfgjsdfgjsdfgjsdjflxncvzxnvasdjfaopsruosihjsdkghajsdflahgfsif_{j-1}\zeta\mathcal{P}\bigl( \Vjm_i w \bigr),\partial_{i}\varrho\bigr)\,dr \\&\indeq \rightarrow \sifpoierjsodfgupoefasdfgjsdfgjsdfgjsdjflxncvzxnvasdjfaopsruosihjsdfghajsdflahgfsif_0^s (v ,\Delta\sifpoierjsodfgupoefasdfgjsdfgjsdfgjsdjflxncvzxnvasdjfaopsruosihjsdkghajsdflahgfsif)\,dr +\sum_{i}\sifpoierjsodfgupoefasdfgjsdfgjsdfgjsdjflxncvzxnvasdjfaopsruosihjsdfghajsdflahgfsif_0^s \bigl(\psi^2 \sifpoierjsodfgupoefasdfgjsdfgjsdfgjsdjflxncvzxnvasdjfaopsruosihjsdkghajsdflahgfsif^2\mathcal{P}\bigl( v_i v \bigr),\partial_{i}\varrho\bigr)\,dr \\&\indeq + \sum_{i}\sifpoierjsodfgupoefasdfgjsdfgjsdfgjsdjflxncvzxnvasdjfaopsruosihjsdfghajsdflahgfsif_0^s \bigl(\psi^2\sifpoierjsodfgupoefasdfgjsdfgjsdfgjsdjflxncvzxnvasdjfaopsruosihjsdkghajsdflahgfsif^2\zeta \mathcal{P}\bigl( w_i v \bigr),\partial_{i}\varrho\bigr)\,dr + \sum_{i}\sifpoierjsodfgupoefasdfgjsdfgjsdfgjsdjflxncvzxnvasdjfaopsruosihjsdfghajsdflahgfsif_0^s \bigl(\psi^2\sifpoierjsodfgupoefasdfgjsdfgjsdfgjsdjflxncvzxnvasdjfaopsruosihjsdkghajsdflahgfsif^2\zeta \mathcal{P}\bigl( v_i w \bigr),\partial_{i}\varrho\bigr)\,dr \comma  (s,\omega)\text{-a.e.} , \end{split} \llabel{UCU543rbavpOMyelWYWhVBRGow5JRh2nMfUcoBkBXUQ7UlO5rYfHDMceWou3RoFWtbaKh70oHBZn7unRpRh3SIpp0Btqk5vhXCU9BHJFx7qPxB55a7RkOyHmSh5vwrDqt0nF7toPJUGqHfY5uAt5kQLP6ppnRjMHk3HGqZ0OBugFFxSnASHBI7agVfqwfgaAleH9DMnXQQTAAQM8qz9trz86VR2gOMMVuMgf6tGLZWEKqvkMEOgUzMxgN4CbQ8fWY9Tk73Gg90jy9dJbOvddVZmqJjb5qQ5BSFfl2tNPRC86tI0PIdLDUqXKO1ulgXjPVlfDFkFh42W0jwkkH8dxIkjy6GDgeM9mbTYtUS4ltyAVuor6w7InwCh6GG9Km3YozbuVqtsXTNZaqmwkzoKxE9O0QBQXhxN5Lqr6x7SxmvRwTSBGJY5uo5wSNGp3hCcfQNafXWjxeAFyCxUfM8c0kKkwgpsvwVe4tFsGUIzoWFYfnQAUT9xclTfimLCJRXFAmHe7VbYOaFBPjjeF6xI3CzOVvimZ32pt5uveTrhU6y8wjwAyIU3G15HMybdauGckOFnq6a5HaR4DOojrNAjdhSmhOtphQpc9jXX2u5rwPHzW032fi2bz160Ka4FDjd1yVFSMTzSvF1YkRzdzbYbI0qjKMNXBFtXoCZdj9jD5AdSrNBdunlTDIaA4UjYSx6DK1X16i3yiQuq4zooHvHqNgT2VkWGBVA4qeo8HH70FflAqTDBKi461GvMgzd7WriqtFqEQ48} \end{align} for all $s\in [0,t^{\ast}]$ as~$j\rightarrow\infty$, where \begin{equation} \psi = \theta\left (\frac{\sifpoierjsodfgupoefasdfgjsdfgjsdfgjsdjflxncvzxnvasdjfaopsruosihjsdgghajsdflahgfsif v(t, \cdot)\sifpoierjsodfgupoefasdfgjsdfgjsdfgjsdjflxncvzxnvasdjfaopsruosihjsdgghajsdflahgfsif_6}{M_k}\right)   \comma \sifpoierjsodfgupoefasdfgjsdfgjsdfgjsdjflxncvzxnvasdjfaopsruosihjsdkghajsdflahgfsif = \theta\left (\frac{2^{k}\sifpoierjsodfgupoefasdfgjsdfgjsdfgjsdjflxncvzxnvasdjfaopsruosihjsdgghajsdflahgfsif v(t, \cdot)\sifpoierjsodfgupoefasdfgjsdfgjsdfgjsdjflxncvzxnvasdjfaopsruosihjsdgghajsdflahgfsif_3}{\bar{\epsilon}}\right) .    \llabel{24GYcyifYkWHv7EI0aq5JKlfNDCNmWom3VyXJsNt4WP8yGgAoATOkVWZ4ODLtkza9PadGCGQ2FCH6EQppksxFKMWAfY0JdaSYgo7hhGwHttbb4z5qrcdc9CnAmxqY6m8uGf7DZQ6FBUPPiOxgsQ0CZlPYPBa75OiV6tZOBpfYuNcbj4VUpbTKXZRJf36EA0LDgAdfdOpSbg1ynCPUVoRWxeWQMKSmuh3JHqX15APJJX2v0W6lm0llC8hlss1NLWaNhRBAqfIuzkx2sp01oDrYsRywFrNbz1hGpq99FwUzlfcQkTsbCvGIIgmfHhTrM1ItDgCMzYttQRjzFxXIgI7FMAp1kllwJsGodXAT2PgoIp9VonFkwZVQifq9ClAQ4YBwFR4nCyRAg84MLJunx8uKTF3FzlGEQtl32y174wLXZm62xX5xGoaCHvgZFEmyDIzj3q10RZrsswByA2WlOADDDQVin8PTFLGmwi6pgRZQ6A5TLlmnFVtNiJbnUkLyvq9zSBP6eJJq7P6RFaim6KXPWaxm6W7fM83uKD6kNj7vhg4ppZ4ObMaSaPH0oqxABG8vqrqT6QiRGHBCCN1ZblTY4zq8lFqLCkghxDUuZw7MXCD4psZcEX9RlCwf0CCG8bgFtiUv3mQeLWJoyFkv6hcSnMmKbiQukLFpYAqo5Fjf9RRRtqS6XWVoIYVDMla5c7cWKJLUqcvtiIOeVCU7xJdC5W5bk3fQbyZjtUDmegbgI179dlU3u3cvWoAIowbEZ0xP2EQ49} \end{equation} Moreover, by the BDG inequality, \begin{align} \begin{split} &\sadklfjsdfgsdfgsdfgsdfgdsfgsdfgadfasdf\biggl[\sup_{s\in[0,t^{\ast}]}\biggl|\sifpoierjsodfgupoefasdfgjsdfgjsdfgjsdjflxncvzxnvasdjfaopsruosihjsdfghajsdflahgfsif_0^s  \Bigl( \sifpoierjsodfgupoefasdfgjsdfgjsdfgjsdjflxncvzxnvasdjfaopsruosihjsdlghajsdflahgfsif_j\sifpoierjsodfgupoefasdfgjsdfgjsdfgjsdjflxncvzxnvasdjfaopsruosihjsdlghajsdflahgfsif_{j-1}\sifpoierjsodfgupoefasdfgjsdfgjsdfgjsdjflxncvzxnvasdjfaopsruosihjsdkghajsdflahgfsif_j\sifpoierjsodfgupoefasdfgjsdfgjsdfgjsdjflxncvzxnvasdjfaopsruosihjsdkghajsdflahgfsif_{j-1}\zeta \bigl( \sigma(r, \Vjm +w )-\sigma(r,w)\bigr) -\psi^2\sifpoierjsodfgupoefasdfgjsdfgjsdfgjsdjflxncvzxnvasdjfaopsruosihjsdkghajsdflahgfsif^2\zeta \bigl( \sigma(r,v+w)-\sigma(r,w)\bigr) ,\varrho \Bigr) \,d\WW_r\biggr|\biggr] \\&\indeq \leq  C\sadklfjsdfgsdfgsdfgsdfgdsfgsdfgadfasdf\biggl[\biggl(\sifpoierjsodfgupoefasdfgjsdfgjsdfgjsdjflxncvzxnvasdjfaopsruosihjsdfghajsdflahgfsif_0^{t^{\ast}}  (\sifpoierjsodfgupoefasdfgjsdfgjsdfgjsdjflxncvzxnvasdjfaopsruosihjsdlghajsdflahgfsif_{j}-\psi)^2(\sifpoierjsodfgupoefasdfgjsdfgjsdfgjsdjflxncvzxnvasdjfaopsruosihjsdlghajsdflahgfsif_{j-1})^2(\sifpoierjsodfgupoefasdfgjsdfgjsdfgjsdjflxncvzxnvasdjfaopsruosihjsdkghajsdflahgfsif_{j})^2(\sifpoierjsodfgupoefasdfgjsdfgjsdfgjsdjflxncvzxnvasdjfaopsruosihjsdkghajsdflahgfsif_{j-1})^2\zeta^2 \bigl\sifpoierjsodfgupoefasdfgjsdfgjsdfgjsdjflxncvzxnvasdjfaopsruosihjsdgghajsdflahgfsif \bigl( \sigma(r, \Vjm+w )-\sigma(r,w) ,\varrho \bigr) \bigr\sifpoierjsodfgupoefasdfgjsdfgjsdfgjsdjflxncvzxnvasdjfaopsruosihjsdgghajsdflahgfsif_{ l^2}^2\, dr\biggr)^{1/2}\biggr] \\&\indeq\indeq + 
C\sadklfjsdfgsdfgsdfgsdfgdsfgsdfgadfasdf\biggl[\biggl(\sifpoierjsodfgupoefasdfgjsdfgjsdfgjsdjflxncvzxnvasdjfaopsruosihjsdfghajsdflahgfsif_0^{t^{\ast}}  (\psi)^2(\sifpoierjsodfgupoefasdfgjsdfgjsdfgjsdjflxncvzxnvasdjfaopsruosihjsdlghajsdflahgfsif_{j-1})^2(\sifpoierjsodfgupoefasdfgjsdfgjsdfgjsdjflxncvzxnvasdjfaopsruosihjsdkghajsdflahgfsif_{j}-\sifpoierjsodfgupoefasdfgjsdfgjsdfgjsdjflxncvzxnvasdjfaopsruosihjsdkghajsdflahgfsif)^2(\sifpoierjsodfgupoefasdfgjsdfgjsdfgjsdjflxncvzxnvasdjfaopsruosihjsdkghajsdflahgfsif_{j-1})^2\zeta^2 \bigl\sifpoierjsodfgupoefasdfgjsdfgjsdfgjsdjflxncvzxnvasdjfaopsruosihjsdgghajsdflahgfsif \bigl( \sigma(r, \Vjm+w )-\sigma(r,w) ,\varrho \bigr) \bigr\sifpoierjsodfgupoefasdfgjsdfgjsdfgjsdjflxncvzxnvasdjfaopsruosihjsdgghajsdflahgfsif_{ l^2}^2\, dr\biggr)^{1/2}\biggr] \\&\indeq\indeq +  C\sadklfjsdfgsdfgsdfgsdfgdsfgsdfgadfasdf\biggl[\biggl(\sifpoierjsodfgupoefasdfgjsdfgjsdfgjsdjflxncvzxnvasdjfaopsruosihjsdfghajsdflahgfsif_0^{t^{\ast}}  (\psi)^2(\sifpoierjsodfgupoefasdfgjsdfgjsdfgjsdjflxncvzxnvasdjfaopsruosihjsdlghajsdflahgfsif_{j-1})^2(\sifpoierjsodfgupoefasdfgjsdfgjsdfgjsdjflxncvzxnvasdjfaopsruosihjsdkghajsdflahgfsif)^2(\sifpoierjsodfgupoefasdfgjsdfgjsdfgjsdjflxncvzxnvasdjfaopsruosihjsdkghajsdflahgfsif_{j-1})^2\zeta^2 \bigl\sifpoierjsodfgupoefasdfgjsdfgjsdfgjsdjflxncvzxnvasdjfaopsruosihjsdgghajsdflahgfsif \bigl( \sigma(r, \Vjm+w ) - \sigma(r, v+w) ,\varrho \bigr) \bigr\sifpoierjsodfgupoefasdfgjsdfgjsdfgjsdjflxncvzxnvasdjfaopsruosihjsdgghajsdflahgfsif_{ l^2}^2\, dr\biggr)^{1/2}\biggr] \\&\indeq\indeq + C\sadklfjsdfgsdfgsdfgsdfgdsfgsdfgadfasdf\biggl[\biggl(\sifpoierjsodfgupoefasdfgjsdfgjsdfgjsdjflxncvzxnvasdjfaopsruosihjsdfghajsdflahgfsif_0^{t^{\ast}} (\sifpoierjsodfgupoefasdfgjsdfgjsdfgjsdjflxncvzxnvasdjfaopsruosihjsdlghajsdflahgfsif_{j-1}-\psi)^2(\psi)^2(\sifpoierjsodfgupoefasdfgjsdfgjsdfgjsdjflxncvzxnvasdjfaopsruosihjsdkghajsdflahgfsif)^2(\sifpoierjsodfgupoefasdfgjsdfgjsdfgjsdjflxncvzxnvasdjfaopsruosihjsdkghajsdflahgfsif_{j-1})^2\zeta^2 \bigl\sifpoierjsodfgupoefasdfgjsdfgjsdfgjsdjflxncvzxnvasdjfaopsruosihjsdgghajsdflahgfsif \bigl( \sigma(r, v+w)-\sigma(r,w) ,\varrho \bigr) \bigr\sifpoierjsodfgupoefasdfgjsdfgjsdfgjsdjflxncvzxnvasdjfaopsruosihjsdgghajsdflahgfsif_{ l^2}^2\, dr\biggr)^{1/2}\biggr] \\&\indeq\indeq + C\sadklfjsdfgsdfgsdfgsdfgdsfgsdfgadfasdf\biggl[\biggl(\sifpoierjsodfgupoefasdfgjsdfgjsdfgjsdjflxncvzxnvasdjfaopsruosihjsdfghajsdflahgfsif_0^{t^{\ast}} (\psi)^4(\sifpoierjsodfgupoefasdfgjsdfgjsdfgjsdjflxncvzxnvasdjfaopsruosihjsdkghajsdflahgfsif)^2(\sifpoierjsodfgupoefasdfgjsdfgjsdfgjsdjflxncvzxnvasdjfaopsruosihjsdkghajsdflahgfsif_{j-1}-\sifpoierjsodfgupoefasdfgjsdfgjsdfgjsdjflxncvzxnvasdjfaopsruosihjsdkghajsdflahgfsif)^2\zeta^2 \bigl\sifpoierjsodfgupoefasdfgjsdfgjsdfgjsdjflxncvzxnvasdjfaopsruosihjsdgghajsdflahgfsif \bigl( \sigma(r, v+w)-\sigma(r,w) ,\varrho \bigr) \bigr\sifpoierjsodfgupoefasdfgjsdfgjsdfgjsdjflxncvzxnvasdjfaopsruosihjsdgghajsdflahgfsif_{ l^2}^2\, dr\biggr)^{1/2}\biggr] :=  \sum_{i=1}^{5}Q_i. \end{split} \label{EQ50} \end{align} We estimate these terms using Minkowski's inequality,  properties of the cutoff functions, and the assumptions on~$\sigma$. First, \begin{align} \begin{split} Q_1&\leq C \sadklfjsdfgsdfgsdfgsdfgdsfgsdfgadfasdf\biggl[ \biggl( \sifpoierjsodfgupoefasdfgjsdfgjsdfgjsdjflxncvzxnvasdjfaopsruosihjsdfghajsdflahgfsif_0^{t^{\ast}} (\sifpoierjsodfgupoefasdfgjsdfgjsdfgjsdjflxncvzxnvasdjfaopsruosihjsdlghajsdflahgfsif_{j}-\psi)^2(\sifpoierjsodfgupoefasdfgjsdfgjsdfgjsdjflxncvzxnvasdjfaopsruosihjsdlghajsdflahgfsif_{j-1}\sifpoierjsodfgupoefasdfgjsdfgjsdfgjsdjflxncvzxnvasdjfaopsruosihjsdkghajsdflahgfsif_{j}\sifpoierjsodfgupoefasdfgjsdfgjsdfgjsdjflxncvzxnvasdjfaopsruosihjsdkghajsdflahgfsif_{j-1})^2 \sifpoierjsodfgupoefasdfgjsdfgjsdfgjsdjflxncvzxnvasdjfaopsruosihjsdgghajsdflahgfsif \Vjm \sifpoierjsodfgupoefasdfgjsdfgjsdfgjsdjflxncvzxnvasdjfaopsruosihjsdgghajsdflahgfsif_{6}^2\, dr \biggr)^{1/2}  \biggr] \leq  C_{k,T} \sqrt{t^{\ast}} \biggl(\sadklfjsdfgsdfgsdfgsdfgdsfgsdfgadfasdf\Big[ \sup_{r\in[0,t^{\ast}]}\sifpoierjsodfgupoefasdfgjsdfgjsdfgjsdjflxncvzxnvasdjfaopsruosihjsdgghajsdflahgfsif\Vj -v\sifpoierjsodfgupoefasdfgjsdfgjsdfgjsdjflxncvzxnvasdjfaopsruosihjsdgghajsdflahgfsif_{6}^6 \Big] \biggr)^{1/6} . \end{split} \llabel{FBMSwazV1XfzVi97mmy5sTJK0hz9O6pDaGctytmHTDYxTUBALNvQefRQuF2OyokVsLJwdqgDhTTJeR7CuPczNLVj1HKml8mwLFr8Gz66n4uA9YTt9oiJGclm0EckA9zkElOB9Js7Gfwhqyglc2RQ9d52aYQvC8ArK7aCLmENPYd27XImGC6L9gOfyL05HMtgR65lBCsWGwFKGBIQiIRBiT95N78wncbk7EFeiBRB216SiHoHJSkNgxqupJmZ1pxEbWcwiJX5NfiYPGD6uWsXTP94uaFVDZuhJH2d0PLOY243xMK47VP6FTyT35zpLxRC6tN89as3ku8eGrdMKWoMIU946FBjksOTe0UxZD4avbTw5mQ3Ry9AfJFjPgvLFKz0olfZdj3O07EavpWfbM3rBGSyOiuxpI4o82JJ42X1GIux8QFh3PhRtY9vjiSL6x76W9y2Zz3YASGRMp7kDhrgma8fWGG0qKLsO5oQr42t1jP1crM2fClRbETdqra5lVG1lKitbXqbdPKcaUV0lv4Lalo8VTXclaUqh5GWCzAnRnlNNcmwaF8ErbwX32rjiHleb4gXSjLROJgG2yb8OCAxN4uy4RsLQjD7U7enwcYCnZxiKdju74vpjBKKjRRl36kXXzvnX2JrD8aPDUWGstgb8CTWYnHRs6y6JCp8Lx1jzCI1mtG26y5zrJ1nFhX67wCzqF8uZQIS0dnYxPeXDyjBz1aYwzDXaxaMIZzJ3C3QRrahpw8sWLxrAsSqZP5Wvv1QF7EQ51} \end{align} Similarly, by the assumption~\eqref{EQ30}, \begin{align} \begin{split} &Q_3+Q_4 \leq   C_{k,T} \sqrt{t^{\ast}} \biggl(\sadklfjsdfgsdfgsdfgsdfgdsfgsdfgadfasdf\Big[ \sup_{r\in[0,t^{\ast}]}\sifpoierjsodfgupoefasdfgjsdfgjsdfgjsdjflxncvzxnvasdjfaopsruosihjsdgghajsdflahgfsif\Vjm -v\sifpoierjsodfgupoefasdfgjsdfgjsdfgjsdjflxncvzxnvasdjfaopsruosihjsdgghajsdflahgfsif_{6}^6 \Big] \biggr)^{1/6}. \end{split} \llabel{JPAVQwuWu69YLwNHUPJ0wjs7RSiVaPrEGgxYaVmSk3Yo1wLn0q0PVeXrzoCIH7vxq5ztOmq6mp4drApdzhwSOlRPDpsClr8FoZUG7vDUYhbScJ6gJb8Q8emG2JG9Oja83owYwjozLa3DB500siGjEHolPuqe4p7T1kQJmU6cHnOo29oroOzTa3j31n8mDL7CIvCpKZUs0jVrb7vHIH7NTtYY7JKvVdGLhA1ONCWoQW1fvjmlH7lSlIm8T1QSdUWhTiMPKDZmm4V7ofRW1dnlqg0Ah1QRjdtKZVzEBNE1eXiRRSLLQPESEDeXbiMMFfxC5FI1zviyNsYHPsGxfGiIuhDPDi0OIHuBTTHOCHyCTkABxuCjgOZs965wfeFwvfRpNLLT3EvgKgkO9jyyvotRRlpDTdn9H5ZnqwWr4OUkIlxtsk0RZdODnsoYid6ctgwwQrxQk1S8ajpPiZJlp5pIAT1t482KxtvQ6D1TVzQ7F3xoz6Hw2phWDlCJg7VcEix6XFIdlOlcNbgODKp86tCHVGrzEcVnBk99sq5XGd1DNFANeggJYjfBWjAbJSchyEuVlENawP0DWoZWKuP4IPtvZbmnRL0472K3bBQIH5SpPxtXy5NJjoWceA7FeT7IwpivQdqLaeZE0QfiMW1KozkdUtRsGH6ryobMpDbfLt0Z2FAXbR3QQwuIizgZFQ4Gh4lY5pt9RMTieqBIkdXI979BGU2yYtJSanOMsDLWydCQfolxJWbbIdbEggZLBKbFmKXoRMEQ52} \end{align} Using that the spatial domain is $\TT^3$, we also have  \begin{align} \begin{split} &Q_2+Q_5 \leq   C_{k,\bar{\epsilon}, T} \sqrt{t^{\ast}} \sadklfjsdfgsdfgsdfgsdfgdsfgsdfgadfasdf\Big[ \sup_{r\in[0,t^{\ast}]}\sifpoierjsodfgupoefasdfgjsdfgjsdfgjsdjflxncvzxnvasdjfaopsruosihjsdgghajsdflahgfsif\Vjm-v\sifpoierjsodfgupoefasdfgjsdfgjsdfgjsdjflxncvzxnvasdjfaopsruosihjsdgghajsdflahgfsif_{3} + \sup_{r\in[0,t^{\ast}]}\sifpoierjsodfgupoefasdfgjsdfgjsdfgjsdjflxncvzxnvasdjfaopsruosihjsdgghajsdflahgfsif\Vj-v\sifpoierjsodfgupoefasdfgjsdfgjsdfgjsdjflxncvzxnvasdjfaopsruosihjsdgghajsdflahgfsif_{3} \Big] \\ &\indeq  \leq   C_{k,\bar{\epsilon}, T} \sqrt{t^{\ast}} \biggl(\sadklfjsdfgsdfgsdfgsdfgdsfgsdfgadfasdf\Big[ \sup_{r\in[0,t^{\ast}]}\sifpoierjsodfgupoefasdfgjsdfgjsdfgjsdjflxncvzxnvasdjfaopsruosihjsdgghajsdflahgfsif\Vjm-v\sifpoierjsodfgupoefasdfgjsdfgjsdfgjsdjflxncvzxnvasdjfaopsruosihjsdgghajsdflahgfsif_{6}^6 \Big] \biggr)^{1/6} +C_{k,\bar{\epsilon}, T} \sqrt{t^{\ast}} \biggl(\sadklfjsdfgsdfgsdfgsdfgdsfgsdfgadfasdf\Big[ \sup_{r\in[0,t^{\ast}]}\sifpoierjsodfgupoefasdfgjsdfgjsdfgjsdjflxncvzxnvasdjfaopsruosihjsdgghajsdflahgfsif\Vj-v\sifpoierjsodfgupoefasdfgjsdfgjsdfgjsdjflxncvzxnvasdjfaopsruosihjsdgghajsdflahgfsif_{6}^6 \Big] \biggr)^{1/6}. \end{split} \llabel{cUyM8NlGnWyuERUtbAs4ZRPHdIWtlbJRtQwoddmlZhI3I8A9K8SyflGzcVjCqGkZnaZrxHNxIcMaeGQdXXxGHFi6AeYBAlo4Q9HZIjJjtOhl4VLmVvcphmMESM8ltxHQQUHjJhYyf5Ndc0i8mHOTNS7yx5hNrJCyJ1ZFj4QeIom7wczw98Bn6SxxoqPtnXp4FyiEb2MCyj2AHaB8FejdIRhqQVfR8rEtz0mq54IZtbSlXdBmEvCuvAf5bYxZ3LEsJYEX8eNmotV2IHlhJE70cs45KVwJR1riFMPEsP3srHa8pqwVNAHusohYIrkNwekfRbDVLm2axu6caKkTXrgBgnQhUA1z8X6MtqvksUfAFVLgTmqPntrgIggjfJfMGfCuByBS7njWfYRNhpHsjFCzM4f6cRDgjPZkbSUHQBnzQwEnS9CxSfn00xmAfwlTv4HIZIZAyXIs4hPOPjQ3v93iTL0JtNJ8baBBWcY18vifUiGKvSQ4gEkZ10yS5lXCwI4oX2gPBisFp7TjKupgVn5oi4uxKt2QP4kbrChS5ZnuWXWep0mOjW1r2IaXvHle8ksF2XQ529gTLs3uvAOf64HOVIqrbLoG5I2n0XskvcKYFIV8yP9tfMEVPR7F0ipDaqwgQxro5EtIWr3tEaSs5CjzfRRALgvmyMhIztVKjStP744RC0TTPQpn8gLVtzpLzEQe2Rck9WuM7XHGA7O7KGwfmZHLhJRNUDEQeBrqfKIt0Y4RW49GKEHYptgLH4F8rZfYCvEQ53} \end{align} Based on the estimates above, the right side of \eqref{EQ50} converges to zero exponentially fast as $j\rightarrow \infty$. Then, \begin{equation} \sifpoierjsodfgupoefasdfgjsdfgjsdfgjsdjflxncvzxnvasdjfaopsruosihjsdfghajsdflahgfsif_0^s (\sifpoierjsodfgupoefasdfgjsdfgjsdfgjsdjflxncvzxnvasdjfaopsruosihjsdlghajsdflahgfsif_j\sifpoierjsodfgupoefasdfgjsdfgjsdfgjsdjflxncvzxnvasdjfaopsruosihjsdlghajsdflahgfsif_{j-1}\sifpoierjsodfgupoefasdfgjsdfgjsdfgjsdjflxncvzxnvasdjfaopsruosihjsdkghajsdflahgfsif_j\sifpoierjsodfgupoefasdfgjsdfgjsdfgjsdjflxncvzxnvasdjfaopsruosihjsdkghajsdflahgfsif_{j-1}\zeta(\sigma(r, \Vjm +w )-\sigma(r, w )),\varrho)\,d\WW_r \xrightarrow{j \to \infty} \sifpoierjsodfgupoefasdfgjsdfgjsdfgjsdjflxncvzxnvasdjfaopsruosihjsdfghajsdflahgfsif_0^s (\psi^2 \sifpoierjsodfgupoefasdfgjsdfgjsdfgjsdjflxncvzxnvasdjfaopsruosihjsdkghajsdflahgfsif^2\zeta(\sigma(r,v+w)-\sigma(r, w )),\varrho)\,d\WW_r,    \llabel{cf1pOyjk8iTES0ujRvFpipcwIvLDgikPuqqk9REdH9YjRUMkr9byFJKLBex0SgDJ2gBIeCX2CUZyyRtGNY3eGOaDp3mwQyV1AjtGLgSC1dDpQCBcocMSM4jqbSWbvx6aSnuMtD05qpwNDlW0tZ1cbjzwU5bUdCGAghCw0nICDFKHRkphbtA6nYld6c5TSkDq3Qxo2jhDxQbmb8nPq3zNZQFJJyuVm1C6rzRDCB1meQy4TtYr5jQVWoOfbrYQ6qakZepHb2b5w4KN3mEHtQKAXsIycbakyID9O8YCmRlEW7fGISs6xazbM6PSBN2Bjtb65zz2NuYo4kUlpIqJVBC4DzuZZN6Zkz0oommnswebstFmlxkKEQEL6bsoYzxx08IQ5Ma7InfdXLQ9jeHSTmigttk4vP7778Hp1o67atRbfcrS2CWzwQ9j0Rjr0VL9vlvkkk6J9bM1XgiYlay8ZEq39Z53jRnXh5mKPPa5tFw7E0nE7CuFIoVlFxguxB1hqlHeOLdb7RKfl0SKJiYekpvRSYnNFf7UVOWBvwpN9mtgGwh2NJCY53IdJXPpYAZ1B1AgSxn61oQVtg7W7QcPC42ecSA5jG4K5H1tQs6TNphOKTBIdGkFSGmV0kzAxavQzjeXGbiSjg3kYZ5LxzF3JNHknrmy4smJ70whEtBeXkSTWEujcAuS0NkHloa7wYgMa5j8g4gi7WZ77Ds5MZZMtN5iJEaCfHJ0sD6zVuX06BP99Fga9GgYMv6YFVOBERy3Xw2SBYEQ54} \end{equation} $(s,\omega)$-a.e.\ for all $s\in [0,t^{\ast}]$. Letting $j\rightarrow \infty$ in \eqref{EQ47} and \eqref{EQ37}, we obtain that $v$ solves~\eqref{EQ34} and satisfies~\eqref{EQ37}. Thus, the existence of a strong solution is established on $[0,t^{\ast}]$. The pathwise uniqueness can be proven in a similar manner. Note that the smallness of $t^{\ast}$ depends only on $k$, $T$, and~$\bar{\epsilon}$. By extending this result for finitely many steps, we obtain the existence and uniqueness of a strong solution to~\eqref{EQ34} as well as the energy estimate \eqref{EQ37} on $[0,T]$.   \end{proof} \par \subsection{Global energy control} To assess the impact of the cutoff functions in the truncated models~\eqref{EQ34}, we derive certain energy estimates of the solution $v^{(k)}$ for all $k\in\NNz$. \par \cole \begin{Lemma}[An $L^{p}$-energy control] \label{L07} Let $\epsilon>0$, $p\in\{3,6\}$, and $k\in\mathbb{N}_0$. Suppose that $\vk_0$ is an initial datum of~\eqref{EQ34} satisfying the assertions in Lemma~\ref{L05}; additionally, assume that $\sifpoierjsodfgupoefasdfgjsdfgjsdfgjsdjflxncvzxnvasdjfaopsruosihjsdgghajsdflahgfsif \ukm\sifpoierjsodfgupoefasdfgjsdfgjsdfgjsdjflxncvzxnvasdjfaopsruosihjsdgghajsdflahgfsif_3\leq\epsilon$ in $\Omega\times [0,\infty)$. If $\epsilon$, $\bar{\epsilon}$, and $\epsilon_{\sigma}$ (see \eqref{EQ36} and the assumptions~\eqref{EQ30} on $\sigma$) are sufficiently small, then there exists a positive constant $C$ that is independent of $k$ so that \begin{align} \sadklfjsdfgsdfgsdfgsdfgdsfgsdfgadfasdf \left[\sup_{t\in[0,\infty)} \sifpoierjsodfgupoefasdfgjsdfgjsdfgjsdjflxncvzxnvasdjfaopsruosihjsdgghajsdflahgfsif \vk(t)\sifpoierjsodfgupoefasdfgjsdfgjsdfgjsdjflxncvzxnvasdjfaopsruosihjsdgghajsdflahgfsif_p^p +\sifpoierjsodfgupoefasdfgjsdfgjsdfgjsdjflxncvzxnvasdjfaopsruosihjsdfghajsdflahgfsif_0^{\infty}\sum_{i=1}^{3} \sifpoierjsodfgupoefasdfgjsdfgjsdfgjsdjflxncvzxnvasdjfaopsruosihjsdfghajsdflahgfsif_{\TT^3} | \nabla (|\vk_i(t,x)|^{p/2})|^2 \,dx dt \right] \leq C\sadklfjsdfgsdfgsdfgsdfgdsfgsdfgadfasdf[\sifpoierjsodfgupoefasdfgjsdfgjsdfgjsdjflxncvzxnvasdjfaopsruosihjsdgghajsdflahgfsif \vk_0\sifpoierjsodfgupoefasdfgjsdfgjsdfgjsdjflxncvzxnvasdjfaopsruosihjsdgghajsdflahgfsif_p^p]. \label{EQ55} \end{align} \cole \end{Lemma}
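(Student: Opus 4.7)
The plan is to apply Lemma~\ref{L02} componentwise to the equation \eqref{EQ34} with $p\in\{3,6\}$. Since $\mathcal{P}$ commutes with derivatives and $\vk,\ukm$ are divergence-free, the drift is in divergence form $\partial_j F_j$, where
\[
F_j = -\psi_k^2\phi_k^2\,\mathcal{P}(\vk_j\vk) - \psi_k^2\phi_k^2\zeta_{k-1}\bigl[\mathcal{P}(\ukm_j\vk) + \mathcal{P}(\vk_j\ukm)\bigr],
\]
and the noise coefficient reads $G=\psi_k^2\phi_k^2\zeta_{k-1}(\sigma(t,\uk)-\sigma(t,\ukm))$. For each $T<\infty$, Lemma~\ref{L02} then gives
\[
\EE\Bigl[\sup_{0\le t\le T}\Vert\vk\Vert_p^p + \int_0^T\!\!\int|\nabla(|\vk|^{p/2})|^2\Bigr] \le \EE[\Vert v_0^{(k)}\Vert_p^p] + C\,\EE\Bigl[\int_0^T\!\!\int|F|^2|\vk|^{p-2} + \int_0^T\!\!\int|\vk|^{p-2}\Vert G\Vert_{l^2}^2\Bigr],
\]
and the objective is to absorb both error integrals into the dissipation on the left with constants independent of both $k$ and $T$.

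For the nonlinear term I would combine H\"older, $L^q$-boundedness of $\mathcal{P}$ for $1<q<\infty$, and interpolation. A careful H\"older decomposition that places one factor of $\vk$ in $L^3$ and the remainder in $L^{3p}$, together with \eqref{EQ14}, yields
\[
\int|\mathcal{P}(\vk\otimes\vk)|^2|\vk|^{p-2}\,dx \le C\Vert\vk\Vert_3^2\Vert\vk\Vert_{3p}^p \le C\Vert\vk\Vert_3^2\Vert\nabla(|\vk|^{p/2})\Vert_2^2;
\]
since $\phi_k\neq 0$ forces $\Vert\vk\Vert_3\le 2\bar{\epsilon}/2^k$, this pure nonlinear piece contributes at most $C(\bar{\epsilon}/2^k)^2$ times the dissipation. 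For the cross terms an analogous H\"older decomposition (now placing $\ukm$ in $L^3$) gives
\[
\int|\mathcal{P}(\ukm\otimes\vk)|^2|\vk|^{p-2}\,dx \le C\Vert\ukm\Vert_3^2\Vert\vk\Vert_{3p}^p \le C\epsilon^2\Vert\nabla(|\vk|^{p/2})\Vert_2^2
\]
by the hypothesis $\Vert\ukm\Vert_3\le\epsilon$. The full nonlinear contribution is therefore bounded by $C(\bar{\epsilon}^2+\epsilon^2)\Vert\nabla(|\vk|^{p/2})\Vert_2^2$, absorbable when $\bar{\epsilon},\epsilon$ are small.

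For the noise term, assumption \eqref{EQ30} and H\"older yield $\int|\vk|^{p-2}\Vert G\Vert_{l^2}^2\,dx \le \Vert\vk\Vert_p^{p-2}\Vert G\Vert_{\mathbb{L}^p}^2 \le C\epsilon_{\sigma}^2\Vert\vk\Vert_p^p$. The decisive step, which makes the estimate time-uniform, is a Poincar\'e-type bound on the compact torus: H\"older gives $\Vert\vk\Vert_p\le C\Vert\vk\Vert_{3p}$, and \eqref{EQ14} (applicable because $\vk$ preserves zero spatial average throughout the evolution, as $\mathcal{P}$, $\Delta$, and the zero-mean $\sigma$ all preserve this property) then implies
\[
\Vert\vk\Vert_p^p \le C\Vert\nabla(|\vk|^{p/2})\Vert_2^2,
\]
so the noise contribution is controlled by $C\epsilon_{\sigma}^2\Vert\nabla(|\vk|^{p/2})\Vert_2^2$ and is absorbable for $\epsilon_{\sigma}$ small.

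Choosing $\bar{\epsilon},\epsilon,\epsilon_{\sigma}$ so that the total absorption coefficient is at most $1/2$, I obtain $\EE[\sup_{t\le T}\Vert\vk\Vert_p^p + \int_0^T\Vert\nabla(|\vk|^{p/2})\Vert_2^2\,dt] \le C\,\EE[\Vert v_0^{(k)}\Vert_p^p]$ uniformly in $T$, with $C$ depending only on $p$; monotone convergence as $T\to\infty$ yields \eqref{EQ55}, and by construction $C$ is independent of $k$. The hard part is precisely this time-uniformity at the critical exponent: the naive noise bound $\epsilon_{\sigma}^2\int\Vert\vk\Vert_p^p\,dt$ would only yield an exponential-in-$T$ Gr\"onwall control, and it is the Sobolev--Poincar\'e chain $\Vert v\Vert_p\le C\Vert v\Vert_{3p}$ combined with \eqref{EQ14} on $\TT^3$ that converts the time-integrated $L^p$ term into a dissipation term and lets the estimate close uniformly in $T$ and $k$.
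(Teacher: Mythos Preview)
Your proposal is correct and follows essentially the same route as the paper: apply Lemma~\ref{L02} componentwise, use H\"older with exponents $3p/(p+1)$ and $3p/(p-2)$ to bound $\int|F|^2|\vk|^{p-2}$ by $C(\bar\epsilon+\epsilon)^2\Vert\vk\Vert_{3p}^p$ via the cutoff $\phi_k$ and the hypothesis $\Vert\ukm\Vert_3\le\epsilon$, bound the noise term by $C\epsilon_\sigma^2\Vert\vk\Vert_p^p$ via \eqref{EQ30}, and then convert $\Vert\vk\Vert_p^p\le C\Vert\vk\Vert_{3p}^p\le C\Vert\nabla(|\vk|^{p/2})\Vert_2^2$ using the torus embedding and \eqref{EQ14} to absorb everything into the dissipation uniformly in $T$. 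Your explicit identification of the Poincar\'e--Sobolev chain as the mechanism behind time-uniformity matches the paper's computation exactly.
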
 \colb \par \begin{proof}[Proof of Lemma~\ref{L07}] Recall that in Lemma~\ref{L06} we have established the existence of a unique strong solution $\vk$ to~\eqref{EQ34} on $[0,T]$ for an arbitrary $T>0$. Now we apply Lemma~\ref{L02} to~\eqref{EQ34} componentwise, obtaining   \begin{align}   \begin{split}   &\sadklfjsdfgsdfgsdfgsdfgdsfgsdfgadfasdf\biggl[\sup_{0\leq t\leq T}\sifpoierjsodfgupoefasdfgjsdfgjsdfgjsdjflxncvzxnvasdjfaopsruosihjsdgghajsdflahgfsif\vk_j(t,\cdot)\sifpoierjsodfgupoefasdfgjsdfgjsdfgjsdjflxncvzxnvasdjfaopsruosihjsdgghajsdflahgfsif_p^p   +\sifpoierjsodfgupoefasdfgjsdfgjsdfgjsdjflxncvzxnvasdjfaopsruosihjsdfghajsdflahgfsif_0^{T} \sifpoierjsodfgupoefasdfgjsdfgjsdfgjsdjflxncvzxnvasdjfaopsruosihjsdfghajsdflahgfsif_{\TT^3} | \nabla (|\vk_j(t,x)|^{p/2})|^2 \,dx dt\biggr]   \\&\indeq  \leq C   \sadklfjsdfgsdfgsdfgsdfgdsfgsdfgadfasdf\biggl[    \sifpoierjsodfgupoefasdfgjsdfgjsdfgjsdjflxncvzxnvasdjfaopsruosihjsdgghajsdflahgfsif\vk_{0,j}\sifpoierjsodfgupoefasdfgjsdfgjsdfgjsdjflxncvzxnvasdjfaopsruosihjsdgghajsdflahgfsif_p^p    +   \sifpoierjsodfgupoefasdfgjsdfgjsdfgjsdjflxncvzxnvasdjfaopsruosihjsdfghajsdflahgfsif_0^{T} \sifpoierjsodfgupoefasdfgjsdfgjsdfgjsdjflxncvzxnvasdjfaopsruosihjsdfghajsdflahgfsif_{\TT^3}   \bigl(   |f_j^{(1)}|^2   +   |f_j^{(2)}|^2   + |f_j^{(3)}|^2   \bigr)    |\vk_j(t,x)|^{p-2}\,dxdt   \biggr]    \\&\indeq   + C   \sadklfjsdfgsdfgsdfgsdfgdsfgsdfgadfasdf\biggl[   \sifpoierjsodfgupoefasdfgjsdfgjsdfgjsdjflxncvzxnvasdjfaopsruosihjsdfghajsdflahgfsif_0^{T}\sifpoierjsodfgupoefasdfgjsdfgjsdfgjsdjflxncvzxnvasdjfaopsruosihjsdlghajsdflahgfsif_k^2 \sifpoierjsodfgupoefasdfgjsdfgjsdfgjsdjflxncvzxnvasdjfaopsruosihjsdkghajsdflahgfsif_k^2 \zeta_{k-1}\sifpoierjsodfgupoefasdfgjsdfgjsdfgjsdjflxncvzxnvasdjfaopsruosihjsdfghajsdflahgfsif_{\TT^3} |\vk_j(t)|^{p-2}\sifpoierjsodfgupoefasdfgjsdfgjsdfgjsdjflxncvzxnvasdjfaopsruosihjsdgghajsdflahgfsif      \sigma_j(t, \uk( t,x)) -   \sigma_j(t, \ukm( t,x))\sifpoierjsodfgupoefasdfgjsdfgjsdfgjsdjflxncvzxnvasdjfaopsruosihjsdgghajsdflahgfsif_{l^2}^2\,dxdt   \biggr]   \commaone j=1,2,3,   \end{split}   \llabel{ZDxixxWHrrlxjKA3fokPh9Y758fGXEhgbBw82C4JCStUeozJfIuGjPpwp7UxCE5ahG5EGJF3nRLM8CQc00TcmXISIyZNJWKMIzkF5u1nvD8GWYqBt2lNxdvzbXj00EEpUTcw3zvyfab6yQoRjHWRFJzPBuZ61G8w0SAbzpNLIVjWHkWfjylXj6VZvjsTwO3UzBosQ7erXyGsdvcKrYzZGQeAM1u1TNkybHcU71KmpyahtwKEj7Ou0A7epb7v4FdqSAD7c02cGvsiW444pFeh8OdjwM7olsSQoeyZXota8wXr6NSG2sFoGBel3PvMoGgamq3YkaatLidTQ84LYKFfAF15vlZaeTTxvru2xlM2gFBbV80UJQvkebsTqFRfmCSVe34YVHOuKokFXYI2MTZj8BZX0EuD1dImocM93NjZPlPHqEll4z66IvF3TOMb7xuVRYjlVEBGePNUgLqSd4OYNeXudaDQ6BjKUrIpcr5n8QTNztBho3LC3rc30it5CN2TmN88XYeTdqTLPlS97uLMw0NAsMphOuPNisXNIlWfXBGc2hxykg50QTN75t5JNwZR3NH1MnVRZj2PrUYveHPEljGaTIx4sCFzKB0qp3PleK68p85w44l5zZl07brv61KkiAuTSA5dkwYS3F3YF3e1xKEJWoAvVOZVbwNYgF7CKbSi92R0rlWh2akhCoEppr6O2PZJDZN8ZZD4IhHPTMvSDTgOy1lZ0Y86n9aMgkWdeuOZjOi2Fg3ziYaSRCjlzXdQKbEQ56}   \end{align} where    \begin{align}   \begin{split}   &   f^{(1)}_{ij} = - \sifpoierjsodfgupoefasdfgjsdfgjsdfgjsdjflxncvzxnvasdjfaopsruosihjsdlghajsdflahgfsif_k^2\sifpoierjsodfgupoefasdfgjsdfgjsdfgjsdjflxncvzxnvasdjfaopsruosihjsdkghajsdflahgfsif_k^2(\mathcal{P}(\vk_i \vk))_j,   \\&   f^{(2)}_{ij} = - \sifpoierjsodfgupoefasdfgjsdfgjsdfgjsdjflxncvzxnvasdjfaopsruosihjsdlghajsdflahgfsif_k^2\sifpoierjsodfgupoefasdfgjsdfgjsdfgjsdjflxncvzxnvasdjfaopsruosihjsdkghajsdflahgfsif_k^2\zeta_{k-1}(\mathcal{P}(u^{(k-1)}_i \vk))_j,   \\&   f^{(3)}_{ij} = - \sifpoierjsodfgupoefasdfgjsdfgjsdfgjsdjflxncvzxnvasdjfaopsruosihjsdlghajsdflahgfsif_k^2\sifpoierjsodfgupoefasdfgjsdfgjsdfgjsdjflxncvzxnvasdjfaopsruosihjsdkghajsdflahgfsif_k^2\zeta_{k-1}(\mathcal{P}(\vk_i u^{(k-1)}))_j   ,   \end{split}   \llabel{cnb5pKTqrJp1P6oGyxc9vZZRZeFr5TsSZzGl7HWuIGM0yReYDw3lMuxgAdF6dpp8ZVRcl7uqH8OBMbzL6dKBflWCWdlVhycV5nEpv2JSkD0ccMpoIR38QpeZj9j0ZoPmqXRTxBs8w9Q5epR3tN5jbvbrbSK7U4W4PJ0ovnB0opRpCYNPso834PwtSRqvir4DRqujaJq32QUTG1Pgbp6nJM2CUnENdJCr3ZGBHEgBtdsTd84gM22gKBN7QnmRtJgKUIGEeKx64yAGKGezeJNmpeQkLR389HH9fXLBcE6T4GjVZLIdLQIiQtkBk9G9FzHWIGm91M7SW029tzNUX3HLrOUtvG5QZnDqyM6ESTxfoUVylEQ99nTCSkHA8sfxrONeFp9QLDnhLBPibiujcJc8QzZ2KzDoDHg252clhDcaQ1cnxG9aJljFqmADsfDFA0wDO3CZrQ1a2IGtqKbjciqzRSd0fjSJA1rsie9iqOr5xgVljy6afNuooOyIVlT21vJWfKUdeLbcq1MwF9NR9xQnp6TqgElSk50p43HsdCl7VKkZd12Ijx43vI72QyQvUm77BV23a6Wh6IXdP9n67StlZllbRiDyGNr0g9S4AHAVga0XofkXFZwgGtsW2J492NC7FAd8AVzIE0SwEaNEI8v9ele8EfNYg3uWVH3JMgi7vGf4N0akxmBAIjpx4dXlxQRGJZerTMzBxY9JAtmZCjH9064Q4uzKxgmpCQg8x06NYx02vknEtYX5O2vgP3gcspGswFEQ57}   \end{align} for $i,j=1,2,3$. Starting with $f^{(2)}$, we have   \begin{align}   \begin{split}   &\sifpoierjsodfgupoefasdfgjsdfgjsdfgjsdjflxncvzxnvasdjfaopsruosihjsdfghajsdflahgfsif_{\TT^3} |f^{(2)}_j|^2 |\vk_j|^{p-2} \,dx    \leq   C    \sifpoierjsodfgupoefasdfgjsdfgjsdfgjsdjflxncvzxnvasdjfaopsruosihjsdgghajsdflahgfsif |f^{(2)}_j|^2\sifpoierjsodfgupoefasdfgjsdfgjsdfgjsdjflxncvzxnvasdjfaopsruosihjsdgghajsdflahgfsif_{3p/(2p+2)}   \sifpoierjsodfgupoefasdfgjsdfgjsdfgjsdjflxncvzxnvasdjfaopsruosihjsdgghajsdflahgfsif |\vk_j|^{p-2}\sifpoierjsodfgupoefasdfgjsdfgjsdfgjsdjflxncvzxnvasdjfaopsruosihjsdgghajsdflahgfsif_{3p/(p-2)}   \\&\indeq   =   C   \sifpoierjsodfgupoefasdfgjsdfgjsdfgjsdjflxncvzxnvasdjfaopsruosihjsdgghajsdflahgfsif f^{(2)}_j\sifpoierjsodfgupoefasdfgjsdfgjsdfgjsdjflxncvzxnvasdjfaopsruosihjsdgghajsdflahgfsif_{3p/(p+1)}^2   \sifpoierjsodfgupoefasdfgjsdfgjsdfgjsdjflxncvzxnvasdjfaopsruosihjsdgghajsdflahgfsif \vk_j\sifpoierjsodfgupoefasdfgjsdfgjsdfgjsdjflxncvzxnvasdjfaopsruosihjsdgghajsdflahgfsif_{3p}^{p-2}   \leq   C   \sifpoierjsodfgupoefasdfgjsdfgjsdfgjsdjflxncvzxnvasdjfaopsruosihjsdgghajsdflahgfsif \ukm\sifpoierjsodfgupoefasdfgjsdfgjsdfgjsdjflxncvzxnvasdjfaopsruosihjsdgghajsdflahgfsif_{3}^2   \sifpoierjsodfgupoefasdfgjsdfgjsdfgjsdjflxncvzxnvasdjfaopsruosihjsdgghajsdflahgfsif \vk\sifpoierjsodfgupoefasdfgjsdfgjsdfgjsdjflxncvzxnvasdjfaopsruosihjsdgghajsdflahgfsif_{3p}^{p}  \leq   C \epsilon    \sifpoierjsodfgupoefasdfgjsdfgjsdfgjsdjflxncvzxnvasdjfaopsruosihjsdgghajsdflahgfsif \vk\sifpoierjsodfgupoefasdfgjsdfgjsdfgjsdjflxncvzxnvasdjfaopsruosihjsdgghajsdflahgfsif_{3p}^{p}  \commaone j=1,2,3,   \end{split}   \llabel{qhX3apbPWsf1YOzHivDia1eODMILTC2mPojefmEVB9hWwMaTdIGjm9PdpHVWGV4hXkfK5Rtci05ekzj0L8Tme2JPXpDI8EbcqV4FdxvrHIeP8CdORJpTiMVEbAunSGsUMWPts4uBv2QSiXIb7B8zo7bp9voEwNRuXJ4ZxuRZYhc1h339THRXVFw5XVW8gaB39mFSv6MzeznkbLHrtZ73hUuaqLvPhgTlNnVpo1ZggmnRAqM3X31ORYSj8RktS8VGOjrz1iblt3uOuEs8Q3xJ1cA2NKoF8o6U3mW2Hq5y6jposxJgwWZ4Exd79JvlcwauoRDCYZzmpabV09jgumebzcbugpatf9yU9iBEyv3UhS79XdImPNEhN64Rs9iHQ847jXUCAufFmsnUudD4Sg3FMLMWbcBYs4JFyYzlrSfnkxPjOHhsqlbV5eBld5H6AsVtrHgCNYn5aC028FEqoWaKSs9uu8xHrbn1eRIp7sL8JrFQJatogZc54yHZvPxPknqRqGw7hlG6oBkzlEdJSEigf0Q1BoCManS1uLzlQ3HnAuqHGPlcIadFLRkdjaLg0VAPAn7c8DqoV8bRCvOzqk5e0Zh3tzJBWBORSwZs9CgFbGo1EFAK7EesLXYWaOPF4nXFoGQlh3pG7oNtG4mpTMwEqV4pO8fMFjfgktnkwIB8NP60fwfEhjADF3bMqEPV9U0o7fcGqUUL10f65lThLWyoXN4vuSYes96Sc2HbJ0hugJMeB5hVaEdLTXrNo2L78fJmehEQ58}\end{align} in which the finiteness of the far right side is ensured by \eqref{EQ14} and~\eqref{EQ37}. The terms with $f^{(1)}$ and $f^{(3)}$ are estimated in the same way, except that for $f^{(1)}$ we utilize the cutoff functions and obtain   \begin{align}   \begin{split}   \sifpoierjsodfgupoefasdfgjsdfgjsdfgjsdjflxncvzxnvasdjfaopsruosihjsdfghajsdflahgfsif_{\TT^3} |f^{(1)}|^2 |\vk|^{p-2} \,dx    \leq    C \bar{\epsilon}   \sifpoierjsodfgupoefasdfgjsdfgjsdfgjsdjflxncvzxnvasdjfaopsruosihjsdgghajsdflahgfsif \vk\sifpoierjsodfgupoefasdfgjsdfgjsdfgjsdjflxncvzxnvasdjfaopsruosihjsdgghajsdflahgfsif_{3p}^{p}   .   \end{split}   \llabel{CMd6LSWqktpMgskNJq6tvZOkgp1GBBqG4mA7tMVp8Fn60ElQGMxjoGWCrvQUYV1KYKLpPzVhhuXVnWaUVqLxeS9efsAi7LmHXCARg4YJnvBe46DUuQYkdjdz5MfPLHoWITMjUYM7Qryu7W8Er0Ogj2fKqXSclGmIgqXTam7J8UHFqzvbVvxNiuj6Ih7lxbJgMQYj5qtgaxbMHwbJT2tlBsib8i7zj6FMTLbwJqHVIiQ3O0LNnLypZCTVUM1bcuVYTejG3bfhcX0BVQl6Dc1xiWVK4S4RW5PyZEVW8AYt9dNVSXaOkkGKiLHhzFYYK1qNGGEEU4FxdjaS2NRREnhHmB8Vy446a3VCeCkwjCMe3DGfMiFopvlzLp5r0zdXrrBDZQv9HQ7XJMJogkJnsDxWzIN7FUfveeL0ljk83TxrJFDTvEXLZYpEq5emBawZ8VAzvvzOvCKmK2QngMMBAWcUH8FjSJthocw4l9qJTVGsq8yRw5zqVSpd9ArUfVDcDl8B1o5iyUR4KNqb84iOkIQGIczg2ncttxdWfLQlNnsg3BBjX2ETiPrpqigMOSw4CgdGPfiG2HNZhLeaQwywsiiAWrDjo4LDbjBZFDrLMuYdt6k6Hn9wp4Vk7tddFrzCKidQPfCRKUedzV8zISvntBqpu3cp5q7J4FgBq59pSMdEonG7PQCzMcWlVR0iNJhWHVugWPYdIMgtXB2ZSaxazHeWp7rfhk4qrAbJFFG0lii9MWIl44js9gNlu46CfP3HvS8vQxEQ73}   \end{align}  Next, using the smallness assumption~\eqref{EQ30} on $\sigma$, we have   \begin{align}   \begin{split}   &\sadklfjsdfgsdfgsdfgsdfgdsfgsdfgadfasdf\biggl[   \sifpoierjsodfgupoefasdfgjsdfgjsdfgjsdjflxncvzxnvasdjfaopsruosihjsdfghajsdflahgfsif_0^{T}\sifpoierjsodfgupoefasdfgjsdfgjsdfgjsdjflxncvzxnvasdjfaopsruosihjsdlghajsdflahgfsif_k^2 \sifpoierjsodfgupoefasdfgjsdfgjsdfgjsdjflxncvzxnvasdjfaopsruosihjsdkghajsdflahgfsif_k^2 \zeta_{k-1}   \sifpoierjsodfgupoefasdfgjsdfgjsdfgjsdjflxncvzxnvasdjfaopsruosihjsdfghajsdflahgfsif_{\TT^3} |\vk_j(t)|^{p-2}\sifpoierjsodfgupoefasdfgjsdfgjsdfgjsdjflxncvzxnvasdjfaopsruosihjsdgghajsdflahgfsif      \sigma_j(t, \uk( t,x))   -   \sigma_j(t, \ukm( t,x))\sifpoierjsodfgupoefasdfgjsdfgjsdfgjsdjflxncvzxnvasdjfaopsruosihjsdgghajsdflahgfsif_{l^2}^2\,dxdt   \biggr]   \\&\indeq\indeq   \leq C \epsilon_{\sigma}^2   \sadklfjsdfgsdfgsdfgsdfgdsfgsdfgadfasdf\biggl[    \sifpoierjsodfgupoefasdfgjsdfgjsdfgjsdjflxncvzxnvasdjfaopsruosihjsdfghajsdflahgfsif_0^{T}\sifpoierjsodfgupoefasdfgjsdfgjsdfgjsdjflxncvzxnvasdjfaopsruosihjsdlghajsdflahgfsif_k^2 \sifpoierjsodfgupoefasdfgjsdfgjsdfgjsdjflxncvzxnvasdjfaopsruosihjsdkghajsdflahgfsif_k^2    \sifpoierjsodfgupoefasdfgjsdfgjsdfgjsdjflxncvzxnvasdjfaopsruosihjsdgghajsdflahgfsif \vk\sifpoierjsodfgupoefasdfgjsdfgjsdfgjsdjflxncvzxnvasdjfaopsruosihjsdgghajsdflahgfsif_{p}^{p}   \,dt   \biggr]   \leq C \epsilon_{\sigma}   \sadklfjsdfgsdfgsdfgsdfgdsfgsdfgadfasdf\biggl[   \sifpoierjsodfgupoefasdfgjsdfgjsdfgjsdjflxncvzxnvasdjfaopsruosihjsdfghajsdflahgfsif_0^{T}\sifpoierjsodfgupoefasdfgjsdfgjsdfgjsdjflxncvzxnvasdjfaopsruosihjsdlghajsdflahgfsif_k^2 \sifpoierjsodfgupoefasdfgjsdfgjsdfgjsdjflxncvzxnvasdjfaopsruosihjsdkghajsdflahgfsif_k^2   \sifpoierjsodfgupoefasdfgjsdfgjsdfgjsdjflxncvzxnvasdjfaopsruosihjsdgghajsdflahgfsif \vk\sifpoierjsodfgupoefasdfgjsdfgjsdfgjsdjflxncvzxnvasdjfaopsruosihjsdgghajsdflahgfsif_{3p}^{p}   \,dt   \biggr]   .   \end{split}   \llabel{Yw9cEyGYXi3wi41aIuUeQXEjG3XZIUl8VSPJVgCJ3ZOliZQLORzOFVKqlyz8D4NB6M5TQonmBvikY88TJONaDfE2uzbcvfL67bnJUz8Sd7yx5jWroXdJp0lSymIK8bkKzqljNn4KxluFhYLg0FrO6yRztwFTK7QRN01O21ZcHNKgRM7GZ9nB1Etq8sqlAsfxotsl927c6Y8IY8T4x0DRhoh0718MZJoo1oehVLr8AEaLKhyw6SnDthg2HMt9D1jUF5b4wcjllAvvOShtK806ujYa0TYO4pcVXhkOOJVtHN98Qqq0J1HkNcmLS3MApQ75AlAkdnMyJMqACerDl5yPys44a7cY7sEp6LqmG3V53pBs2uPNUM7pX6sy95vSv7iIS8VGJ08QKhAS3jIDNTJsfbhIiUNfeH9Xf8WeCxmBLgzJTIN5NLhvdBOzPmopxYqM4VhkybtYga3XVTTqLyAHyqYqofKP58n8qR9AYrRRetBFxHGg7pduM8gm1TdplRKIW9gi5ZxEEAHDeAsfP5hbxAxbWCvpWk9caqNibi5A5NY5IlVAS3ahAaB8zzUTuyK55glDL5XO9CpORXwrEV1IJG7wEgpOag9zbJiGeT6HEmcMaQpDfyDxheTNjwfwMx2CipkQeUjRUVhCfNMo5DZ4h2adEjZTkOx946EeUIZv7rFL6dj2dwgRxgbObqJsYmsDqQAssn9g2kCb1MsgKfx0YjK0GlrXO7xI5WmQHozMPfCXTmDk2Tl0oRrnZvAsFr7wYEEQ59}   \end{align} Therefore,   \begin{align}   \begin{split}   &\sadklfjsdfgsdfgsdfgsdfgdsfgsdfgadfasdf\biggl[\sup_{0\leq t\leq T}\sifpoierjsodfgupoefasdfgjsdfgjsdfgjsdjflxncvzxnvasdjfaopsruosihjsdgghajsdflahgfsif\vk_j(t,\cdot)\sifpoierjsodfgupoefasdfgjsdfgjsdfgjsdjflxncvzxnvasdjfaopsruosihjsdgghajsdflahgfsif_p^p   +\sifpoierjsodfgupoefasdfgjsdfgjsdfgjsdjflxncvzxnvasdjfaopsruosihjsdfghajsdflahgfsif_0^{T} \sifpoierjsodfgupoefasdfgjsdfgjsdfgjsdjflxncvzxnvasdjfaopsruosihjsdfghajsdflahgfsif_{\TT^3} | \nabla (|\vk_j(t,x)|^{p/2})|^2 \,dx dt\biggr]   \\&\indeq   \leq C   \sadklfjsdfgsdfgsdfgsdfgdsfgsdfgadfasdf\biggl[   \sifpoierjsodfgupoefasdfgjsdfgjsdfgjsdjflxncvzxnvasdjfaopsruosihjsdgghajsdflahgfsif\vk_{0,j}\sifpoierjsodfgupoefasdfgjsdfgjsdfgjsdjflxncvzxnvasdjfaopsruosihjsdgghajsdflahgfsif_p^p   +   (\epsilon+\bar{\epsilon}+\epsilon_{\sigma})\colb\sifpoierjsodfgupoefasdfgjsdfgjsdfgjsdjflxncvzxnvasdjfaopsruosihjsdfghajsdflahgfsif_0^{T}    \sifpoierjsodfgupoefasdfgjsdfgjsdfgjsdjflxncvzxnvasdjfaopsruosihjsdgghajsdflahgfsif \vk\sifpoierjsodfgupoefasdfgjsdfgjsdfgjsdjflxncvzxnvasdjfaopsruosihjsdgghajsdflahgfsif_{3p}^{p}   \,dt   \biggr]   \commaone j=1,2,3.   \end{split}   \llabel{JHCd1xzCvMmjeR4ctk7cS2fncvfaN6AO2nIh6nkVkN8tT8aJdb708jZZqvL1ZuT5lSWGo08cLJ1q3TmAZF8qhxaoYJC6FWRuXHMx3Dcw8uJ87Q4kXVac6OOPDZ4vRtsP01hKUkdaCLBiPSAtLu9WLoyxMaBvixHyadnqQSJWgSCkF7lHaO2yGRIlK3aFZenCWqO9EyRofYbkidHQh1G2vohcMPoEUzp6f14NioarvW8OUc426ArsSo7HiBUKdVs7cOjaV9KEUtKne4VIPuZc4bPRFB9ABfqclU2ct6PDQudt4VOzMMUNrnzJXpxkE2NB8pfJiM4UNg4Oi1gchfOU62avNrpcc8IJm2WnVXLD672ltZTf8RDwqTvBXEWuH2cJtO1INQUlOmEPvj3OOvQSHxiKc8RvNnJNNCC3KXp3J8w50WsOTXHHhvL5kBpKr5urqvVFv8upqgPRPQbjCxme33uJUFhYHBhYMOd01Jt7ySfVpF0z6nCK8grRahMJ6XHoLGu4v2o9QxONVY88aum7cZHRNXHpG1a8KYXMayTxXIkO5vV5PSkCp8PBoBv9dBmepms7DDUaicXY8Lx8IBjFBtke2yShNGE7a0oEMFyAUUFkRWWheDbHhAM6Uh373LzTTTxxm6ybDBsIIIAPHhi837ra970Fam4O7afXUGrf0vWe52e8EPyBFZ0wxBzptJf8LiZkdTZSSPpSzrbGEpxb4KXLHLg1VPaf7ysvYsFJb8rDpAMKnzqDg7g2HwCruQNDBzEQ60}   \end{align} If $\epsilon$, $\bar{\epsilon}$, and $\epsilon_{\sigma}$ are sufficiently small, then by \eqref{EQ14} this estimate implies  \begin{align}  \begin{split}  &\sadklfjsdfgsdfgsdfgsdfgdsfgsdfgadfasdf\biggl[\sup_{0\leq t\leq T}\sifpoierjsodfgupoefasdfgjsdfgjsdfgjsdjflxncvzxnvasdjfaopsruosihjsdgghajsdflahgfsif\vk(t,\cdot)\sifpoierjsodfgupoefasdfgjsdfgjsdfgjsdjflxncvzxnvasdjfaopsruosihjsdgghajsdflahgfsif_p^p  +\sifpoierjsodfgupoefasdfgjsdfgjsdfgjsdjflxncvzxnvasdjfaopsruosihjsdfghajsdflahgfsif_0^{T}\sum_{j=1}^{3} \sifpoierjsodfgupoefasdfgjsdfgjsdfgjsdjflxncvzxnvasdjfaopsruosihjsdfghajsdflahgfsif_{\TT^3} | \nabla (|\vk_j(t,x)|^{p/2})|^2 \,dx dt\biggr]  \leq C  \sadklfjsdfgsdfgsdfgsdfgdsfgsdfgadfasdf\bigl[  \sifpoierjsodfgupoefasdfgjsdfgjsdfgjsdjflxncvzxnvasdjfaopsruosihjsdgghajsdflahgfsif\vk_{0}\sifpoierjsodfgupoefasdfgjsdfgjsdfgjsdjflxncvzxnvasdjfaopsruosihjsdgghajsdflahgfsif_p^p    \bigr]   ,  \end{split}  \label{EQ61}  \end{align} which leads to \eqref{EQ55}, completing the proof. \end{proof}   \par \begin{Remark}\label{R01}   We point out that, without the smallness assumption on $\sigma$, i.e., assuming only $\epsilon$ and $\bar{\epsilon}$ are small, we would have instead    \begin{align*} \begin{split} &\sadklfjsdfgsdfgsdfgsdfgdsfgsdfgadfasdf\biggl[\sup_{0\leq t\leq T}\sifpoierjsodfgupoefasdfgjsdfgjsdfgjsdjflxncvzxnvasdjfaopsruosihjsdgghajsdflahgfsif\vk(t,\cdot)\sifpoierjsodfgupoefasdfgjsdfgjsdfgjsdjflxncvzxnvasdjfaopsruosihjsdgghajsdflahgfsif_p^p +\sifpoierjsodfgupoefasdfgjsdfgjsdfgjsdjflxncvzxnvasdjfaopsruosihjsdfghajsdflahgfsif_0^{T}\sum_{j=1}^{3} \sifpoierjsodfgupoefasdfgjsdfgjsdfgjsdjflxncvzxnvasdjfaopsruosihjsdfghajsdflahgfsif_{\TT^3} | \nabla (|\vk_j(t,x)|^{p/2})|^2 \,dx dt\biggr] \leq C \sadklfjsdfgsdfgsdfgsdfgdsfgsdfgadfasdf\biggl[ \sifpoierjsodfgupoefasdfgjsdfgjsdfgjsdjflxncvzxnvasdjfaopsruosihjsdgghajsdflahgfsif\vk_{0}\sifpoierjsodfgupoefasdfgjsdfgjsdfgjsdjflxncvzxnvasdjfaopsruosihjsdgghajsdflahgfsif_p^p + \sifpoierjsodfgupoefasdfgjsdfgjsdfgjsdjflxncvzxnvasdjfaopsruosihjsdfghajsdflahgfsif_0^{T}  \sifpoierjsodfgupoefasdfgjsdfgjsdfgjsdjflxncvzxnvasdjfaopsruosihjsdgghajsdflahgfsif \vk\sifpoierjsodfgupoefasdfgjsdfgjsdfgjsdjflxncvzxnvasdjfaopsruosihjsdgghajsdflahgfsif_{p}^{p} \,dxdt \biggr], \end{split} \end{align*} which by Gr\"onwall's lemma implies     \begin{align} \begin{split} &\sadklfjsdfgsdfgsdfgsdfgdsfgsdfgadfasdf\biggl[\sup_{0\leq t\leq T}\sifpoierjsodfgupoefasdfgjsdfgjsdfgjsdjflxncvzxnvasdjfaopsruosihjsdgghajsdflahgfsif\vk(t,\cdot)\sifpoierjsodfgupoefasdfgjsdfgjsdfgjsdjflxncvzxnvasdjfaopsruosihjsdgghajsdflahgfsif_p^p +\sifpoierjsodfgupoefasdfgjsdfgjsdfgjsdjflxncvzxnvasdjfaopsruosihjsdfghajsdflahgfsif_0^{T}\sum_{j=1}^{3} \sifpoierjsodfgupoefasdfgjsdfgjsdfgjsdjflxncvzxnvasdjfaopsruosihjsdfghajsdflahgfsif_{\TT^3} | \nabla (|\vk_j(t,x)|^{p/2})|^2 \,dx dt\biggr] \leq C_T \sadklfjsdfgsdfgsdfgsdfgdsfgsdfgadfasdf\bigl[ \sifpoierjsodfgupoefasdfgjsdfgjsdfgjsdjflxncvzxnvasdjfaopsruosihjsdgghajsdflahgfsif\vk_{0}\sifpoierjsodfgupoefasdfgjsdfgjsdfgjsdjflxncvzxnvasdjfaopsruosihjsdgghajsdflahgfsif_p^p \bigr] , \end{split} \label{EQ62} \end{align} for any fixed~$T$. \end{Remark}   \par \cole \begin{Lemma}[A pointwise $L^{3}$ control] \label{L09}   Let $k\in\mathbb{N}_0$. Suppose that $\{\vk_0\}_{k\in\NNp}$ is a sequence of initial data of~\eqref{EQ34} satisfying the assertions in Lemma~\ref{L05}. Then the solution to \eqref{EQ34} satisfies  \begin{equation}    \sup_{t\in [0,\infty)}\sifpoierjsodfgupoefasdfgjsdfgjsdfgjsdjflxncvzxnvasdjfaopsruosihjsdgghajsdflahgfsif \vk(t)\sifpoierjsodfgupoefasdfgjsdfgjsdfgjsdjflxncvzxnvasdjfaopsruosihjsdgghajsdflahgfsif_{L^{3}}    \leq    \frac{\bar{\epsilon}}{2^{k-1}}    \label{EQ71}   \end{equation} $\PP$-almost surely. \end{Lemma}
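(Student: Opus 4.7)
The plan is to exploit the cutoff structure of equation~\eqref{EQ34}: every nonlinear and stochastic term on the right-hand side carries the factor $\phi_k^2$, and by the definition~\eqref{EQ36} together with $\theta\equiv 0$ on $[2,\infty)$, the cutoff $\phi_k$ vanishes identically whenever $\|v^{(k)}(t,\cdot)\|_{3} \ge \bar\epsilon/2^{k-1}$. Consequently, at any time $t$ at which this $L^{3}$ threshold is attained or exceeded, the equation collapses to the pure heat equation $\partial_t v^{(k)} = \Delta v^{(k)}$, componentwise and pathwise, since the stochastic integrand is literally the zero operator.

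First I would verify the base of the argument. By Lemma~\ref{L05} and the choice $\bar\epsilon > 2\epsilon_0$, the initial condition satisfies $\|v^{(k)}_0\|_{3} < \bar\epsilon/2^{k-1}$ almost surely, with strict inequality for every $k \in \NNz$: the case $k=0$ uses $\|v_0^{(0)}\|_{3} \le 2\epsilon_0 < 2\bar\epsilon$, while for $k\ge 1$ one has $\|v_0^{(k)}\|_{3} \le \epsilon_0/4^k < \bar\epsilon/2^{k-1}$. Next, Lemma~\ref{L06} yields $v^{(k)} \in L^6(\Omega; C([0,T], L^6))$ for every $T>0$, and the embedding $L^6(\TT^3) \hookrightarrow L^3(\TT^3)$ upgrades this to pathwise continuity of the scalar $\rho(t) := \|v^{(k)}(t,\cdot)\|_{3}$.

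I would then run a pathwise contradiction on a set of full probability. Assume there is $t_\star > 0$ with $\rho(t_\star) > \bar\epsilon/2^{k-1}$. Using continuity of $\rho$ and the intermediate value theorem, one obtains $\tau < s \le t_\star$ with $\rho(\tau) = \bar\epsilon/2^{k-1}$ and $\rho(t) > \bar\epsilon/2^{k-1}$ for every $t \in (\tau, s)$. On $(\tau, s)$ the cutoff $\phi_k$ is identically zero, so every drift and noise term in~\eqref{EQ34} vanishes and $v^{(k)}$ obeys $\partial_t v^{(k)} = \Delta v^{(k)}$ pathwise, with the divergence-free and mean-zero constraints preserved. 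Since the heat semigroup on $\TT^3$ is an $L^3$-contraction, the Duhamel representation gives $\rho(t) \le \lim_{r\to\tau^+}\rho(r) = \rho(\tau) = \bar\epsilon/2^{k-1}$ for $t \in (\tau, s)$, contradicting the defining property of the interval; hence~\eqref{EQ71} holds.

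The only delicate point is the transition at $\tau$: one must justify that the pure-heat reduction is valid on the entire open interval where $\phi_k \equiv 0$ and pass the contraction estimate to the right limit at $\tau$, both of which follow from pathwise continuity of $\rho$ and from the fact that the stochastic integrator has vanishing integrand on that interval. The conceptual point is that the cutoff $\phi_k$ in~\eqref{EQ36} has been engineered so that reduction to the deterministic heat equation is triggered exactly at the threshold $\bar\epsilon/2^{k-1}$, converting a stochastic $L^3$ bound into a standard heat-semigroup contractivity statement.
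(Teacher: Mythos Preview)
Your argument is correct and rests on the same mechanism as the paper's: once $\|v^{(k)}\|_3$ reaches the threshold $\bar\epsilon/2^{k-1}$, the factor $\phi_k$ annihilates every drift and noise term in \eqref{EQ34}, the dynamics collapse to the heat equation, and $L^3$-contractivity of the heat semigroup prevents the norm from climbing further.

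The route differs from the paper's, however. The paper also argues by contradiction, but instead of working with the random times $\tau(\omega),s(\omega)$ you introduce, it constructs \emph{deterministic} rational triples $(t_0,r,s)$ and an $\mathcal{F}$-measurable set $\tilde\Omega_0$ of positive probability on which $\alpha(t)=\|v^{(k)}(t)\|_3$ sits in a fixed strip $[M_1,M_2]$ (with $M<M_1<M_2<\overline M$) on $[t_0,t_0+r]$, stays above $M_1$ on $[t_0,t_0+r+s]$, and still reaches $\overline M$ in that window. It then localizes to the $\mathcal{F}_{t_0}$-measurable event $E=\{M_1\le\alpha(t_0)\le M_2\}$, so that $v^{(k)}\mathds{1}_E$ remains an adapted solution on a deterministic interval, reads off the heat-equation monotonicity there, and obtains the contradiction on $\tilde\Omega_0\subseteq E$. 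Your pathwise argument bypasses this measurability apparatus and is considerably shorter; the paper's version is more in the spirit of standard stochastic-analysis bookkeeping, where one prefers deterministic intervals and $\mathcal{F}_{t_0}$-events over random time windows.

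One point deserves to be made explicit in your write-up. Your $\tau$ is a last-exit-type time rather than a stopping time, so the claim that the stochastic integral is constant on $(\tau,s)$ does not follow directly from the usual stopped-integral identities. It does follow from the fact that $t\mapsto\int_0^t g\,dW$ is a continuous local martingale whose quadratic variation $\int_0^t\|g\|_{l^2}^2\,dr$ is constant on any interval where the integrand vanishes, and a continuous local martingale with constant quadratic variation is itself constant there. Stating this removes any ambiguity in the ``pathwise reduction to the heat equation'' step and makes your transition at $\tau$ fully rigorous.
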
 \colb \par \begin{proof}[Proof of Lemma~\ref{L09}] Fix $k\in\mathbb{N}_0$. Denote $M=\overline \epsilon/2^{k-1}$ and   \begin{equation}    \alpha( t)=\sifpoierjsodfgupoefasdfgjsdfgjsdfgjsdjflxncvzxnvasdjfaopsruosihjsdgghajsdflahgfsif \vk( t)\sifpoierjsodfgupoefasdfgjsdfgjsdfgjsdjflxncvzxnvasdjfaopsruosihjsdgghajsdflahgfsif_{L^{3}}     \comma t\geq 0       .    \llabel{Z5SNMayKB6RIePFIHFQawrRHAx38CHhoBGVIRvxSMYf0g8hacibKG3CuSl5jTKl42o6gAOYYHUB2SVO3Rc4whR8pwkrrANA4j7MfcEMal4HwKPTgZaZ9G8sevuwIAhkhR8WgafzJA0FVNmSCwUB0QJDgRjCSVSrsGMbWABxvzOMMycNSOylZzwFiNPcmfQhwZPanLp01EUVHMA2dE0nLuNVxKxco7opbQzRaAlowoVtorqU5eUXtEl1qh1IPCPEuVPxcnTwZvKJTpHZpqxXOFaGsrQNPnuqc9CMD8mJZoMOCy6XHjWAfEqI95ZjgcPdV8maWwkHlM30VwDjXlx1QfgyLFxe5iwJDnJIrUKL6eCVth3gX8RLACCC2q5kMYXo8NsDfAn3OsapZT8U3FdMIEDKqMo0YvbwCGMR66XTyIOtLUuC6cmcOFsvpWTniQmu0PeHEF9ImolIuThHWHwh8Jz4hC0rK2GdNzLXiEY7VuQfRbXpiQnPps9gMA8mWkyXsYFLoiRtl2Kl2pI9bSnyi07mUZqhEsBOCgI4F5AFFdjX3wf0Wu2Yqddp2ZUkjeFMAxnDlsut9qzbyRgDWrHldNZewzEK1cSwWJZywloSof6zVDAB6er0o2HZY1trZhBuL5zYzrAUdMKXVKGWKIHOqqx1zj8tlpxuUD83eLUerjxfHNMZlaqZVhT6Jku15FdLvdeo087AsGC8WdoMnf4dToHw47hglTqjKtAlwR9ufOhLKTDgWZhxHFX5gU5uN2S6esPEQ72}   \end{equation} Assume to the contrary that the assertion does not hold. Then \begin{equation} \mathbb{P}\Bigl( \sup_{t\in [0,\infty) }\alpha( t)  >M \Bigr)=\bigcup_n \mathbb{P}\Bigl( \sup_{t\in [0,\infty) }\alpha( t) >M+\frac{1}{n} \Bigr)>0   , \llabel{lxKpXzBmgyWUy5D01WD88a4YmWRfdmev1dBvHOmhTBqurAgTC6yrrRBPn9QfZ9T4mwIh3xjAtkiMlAlTd6fSQ5iQBBY6OErT3gf0DKeECnjgcTXAL8grKBpfcJvq4fpIhWGFSdh6LOqg0ao9AjakqEZKgv95BAqvCSJJgo1Lzsv5yhPQkMpPWnsXvHNZAUQt1DpO47V7AR6JCTR9fnH6QVYwjdZTR3TZSCdOcidDYXYiztVEgNK6hZWtLooE11MiqywCo9kUjdnt1CCc80eJHxMWn7GlDI9yCp9xs9zknCbFsjlKydvYatKpvKJpySVeTP1zRB9N91vA0XXSacVlNzZX3jRgbDjcscBBveadZerkNNT2ni9PDo6HyDNuAvUoYNaIuQ6oUCEi5k5kBw1fwktQDSG4Ky7U2nXSKlOez0PJ0v1SNnMkUdmxxN5t7vhLQxWUulVutc6EMFPa0mIkRDVwluKitmTncyT58CDjRPunXB74DYSJKWEPYi0YxlyAd7HGWyyCeJzd8ht2NnHGfOmsDLbWqhYk2v3J7j2nbB4taYDMN0OkJTdkNPO7JvkTRFYwud2MZ91SZPVQcLlvrOcIN92COu4QpaM7ShSsg1qs8uijWXMMnX9760otPu5BJwtxkMVH4wuj37tRdB7Za2FeTvXLlkC0kZ1ZRCZvbcVw9SRuUimZYbIyqO0qKkkirgpvLzBS44Rwj1NZRJHOafvDTKV3ePSJJ0wuXjKzgeXa11GuCRiRVPRSUNxSqionXMEQ75} \end{equation} and thus there exists $\overline M>M$ such that $\mathbb{P}(\tau<\infty)>0$, where $\tau$ is the first hitting time of $\alpha( t)$ at~$\overline M$.  Choose any $M_1,M_2$ satisfying   \begin{equation}    M < M_1 < M_2 < \overline M.    \llabel{k3f8cKO4inK7IfRUJ0MW6ZdcMT3LaSjZ4IqtQIFDukYcYz700unb8AtSmg2kMKAAL7DB4cgDSBFHX5GPcHAqCytP3oJRryBaylYND2HGJZrTgUURyUwzCli2F9vvyflNg1nR5Y1nxJC235JxySQAmanUoZh1VvDoMy3RL9pUIemC79uRdoMV0hz3sKl3uSB9WEO7EFbVYWJMeDYZe6UuJQ2rbloFIc6ca1hFEdw2d4sSTrHnhAIlQ1o9RphHtZ9C4DInJMbOmYJatZfwEAKGDpQb2Mx5o8ndJnyvrUaP1lkNOGdleO90C3QpE1fEXgQ5Y2APzGVPRJn6rPVXg9U8d9updt0YZpJ4i1h1WTmixB5H0ndNf7UYbKCXm1RHvRl6IgFtPgkhxdX95jIOZ0qtx9xEmzdF1L5sNPY0CKoPBSK6S7faLCutDrBVCtB3NykryNUqACBCto7OVcAjjsNMjfpDA9w2r6CzmQhf2xmwR0HvIhiIjhHoZ4obATsWkJgUBX5JbTtFr3fDq6DjRCxJo6kZKTUfJNw8uCuyAAxqjBCsvLawMMS37OhxIQcaW0SnnMPdZQLjtIIpnIneWsWiUQpO69palKXNRlv9bYG06ptJE7WLpWiXneaUhEszoGmQquLLn5bXdyFia5iLrg7PABBN4v8QapYAv8hGhMd7E10KVuQl78KRe38xdUEFe15K72PTLwYDkutECGADCEBMbcFvLgnnrbdwldq6CS8wVyBzPGZaC3fGkrvmhMj9uvRdnSEQ88}   \end{equation} We claim that there exist deterministic $t_0,r,s\in \mathbb{Q}_{+}=\mathbb{Q} \cap (0,\infty)$ such that   \begin{align}   \begin{split}    &    M_1 \leq \alpha(t) \leq M_2     \comma t\in [t_0,t_0+r],    \\&   M_1 \leq \alpha(t)     \comma t\in [t_0,t_0+r+s], \text{~and}   \\&   \alpha(t)\geq \overline M   \text{~for some $t\in[t_0+r,t_0+r+s]$}   \end{split}    \llabel{YXXg12IkXVNSN3601pWdykiU6kaUwDUZ2G8r5XiZXMQ7AGrplYAPlwn11dm00Jodc1h7zFn5rLbVOHMDh0QggiSOKll3vzZ0A6hDO56OyuNBgfzTkNNyR28PsJUDsofPaXgqBUKo9tXhTwgIFAxgS43mPTRh5QLfBBrLyi8we8dXmJnkR7DcCII4Df1yrovtwK6Zq8Fay5DrbFolZgiNNUKQko199y43VW46dUhwt4dEWECfqszQjuZcFNqRA8EBKTjxXvfBSuPr9IelzgCVxKdowNHtbT6jKdPL1MYPfjfmZiLHDZUfPBhjt64XadgQnI2ylWjZqHTp3H9LIGIdsX0liHs3aE1qHcNYwAfL2aJAMnqOiSdxF6cG4bEx1aZjpeJsDcYCD98Z7vMYv77Vfx3eVQakI2aPi3UhSEKfsDNOley7xrpP4S9FoFg8deOZeMJ5PQWSMloZjHqNXrtCh7pqHNFuq1MF5tPbBVgGwFtRYhAi2q52Rw4dFk76zGcROdFIXft1LK3fxk0xMVmqt2h7rqf9OlF4gjjR2B8FxKi9pwg5yRY8XMXIWISOjbcsRKniRqLwJzkAU7oq6tBKpEv4ESlNOy9u1tciXJCAYUOps6OAh6W4ZxV8lo2dueBiLmZrIw6dFUG4w8PcdiJSFmSwE9KhkgSZ75dycS17zskwXxP9UVlmzJwi5E0CZMvlQLeFZblh53SABDvgFzmlZH5lJ42UOcRojWmp7FtO4atvrDjVQbvSNkhDbNl8R21vnhEQ90}   \end{align} on an $\mathcal{F}$-measurable set $\tilde \Omega_0$ of positive probability. To prove this, first note that by the continuity of the solution trajectories,   \begin{align}   \begin{split}    \{\tau<\infty\}    =    \bigcup_{t_0,r,s\in\mathbb{Q}_{+}}      (A_{t_0,r} \cap  B_{t_0,r,s} \cap C_{t_0,r,s})      ,   \end{split}    \label{EQ91}   \end{align} where   \begin{align}   \begin{split}    A_{t_0,r}      &=    \bigl\{     \omega:     M_1 \leq \alpha(\omega,t)\leq M_2      \text{~for~all~$t\in[t_0,t_0+r]\cap \mathbb{Q}_+$}    \bigr\}    =    \bigcap_{t\in[t_0,t_0+r]\cap \mathbb{Q}_{+}}     \alpha_t^{-1}([M_1,M_2])   \end{split}    \llabel{X3ILYS65pr9OAdnA3j6KEL5tNaNVmsKvlGmy2kMyVvsb73RcLUNGzi6wlc1uFWkYrpXEmfkPv4tQozW2CHUXS2kCG3CfbZ7cLMohJXIukasc5F0vF8GoNZkeI6DEv2OlWLJpuuOiSXadleN1cgyW4u0bp8TDtYFQI8kp29nLjdvVTPfrKxCVr7pdxnVwdHHQubf5O4ixYrddMbrhd60rN8GL1TGfyeCQmNaJN3fJgn0we17GkQAg1WAj6l87vzmOzdKQ1HZ8qATPMo1KAULCHIjKsRXwKT5xYB9iwCmzcM5nlfbkMBhe7ONI5U2NrE5WXsFl5mkw3TzbJicLnBrAjcGa9wJZGTj7YmbbEr0cgxOxs75shL8m7RUxHRQBPihQ1Zg0p7UIyPsopiOaehpYI4z5HbVQxsth4RUeV3BsgK0xxzeyEFCVmbOLA52noFxBk8r1BQcFeK5PIKE9uvUNX3B77LAvk5Gdu4dVuPux0h7zEtOxlvInnvDyGe1qJiATORZ29Nb6q8RI4V3Dv4fBsJDvp6ago5FalZhU1Yx2evycybq7Jw4eJ9oewgCma6lFCjsOyzeoXOyIagDo4rJPDvdVdAIPfvaxOIsle7l60zfITnPR5IE34RJ1dqTXj0SVuTpTrmkFSn2gIWtUvMdtZIWIZTo2aJpeFRGmLIa5YG6yn0LboerwMPeyvJZHroCE38u158qCnrhQWMU6v8vncMbFGS2vM3vW5qwbO6UKlUBS9Y2oL2juyOlk3XJ7mfyQMGEQ92-1}   \end{align} with   \begin{align}   \begin{split}    B_{t_0,r,s}      &=    \bigl\{     \omega:     M_1 \leq \alpha(\omega,t)      \text{~for~all~$t\in[t_0,t_0+r+s]\cap \mathbb{Q}_+$}    \bigr\}    =    \bigcap_{t\in[t_0,t_0+r+s]\cap \mathbb{Q}_{+}}    \alpha_t^{-1}([M_1,\infty))   \end{split}    \llabel{cRyAVSc0Yk8bizEBIGDYdOGoW6emVaisrFm5Ehk6nwh98Pn1pr7Od6qGjlJObLuDe0UqZTHY6w0iZ67Kfw5cPZ1ZFpvG095DYKQTHJ7t5HPbc8WevwqlBtMEFsBgDSaccQONR6sBXLp8nu9ygZ47NBUCknTcXgssyAgKe8nTCWZjy1Ygl83LTLRtp6iSFIDk1BtU6O9pxcNWt3FHEPLmD7TXtkNZr0rYhB8frpuqccVbXrPCHjJbJoqMK7O6CvwuATercnN2SpjTAaMyFNpa9GeKl351nDs3KVr6WMCTYS0zsABO0SwHhCqG0qk62kpIM5YjYCeM76VcZ0cFJZTEHZy5LjzlsDRtfvNE1e4QcGKXy7y7Hp2oXfzX6DZbs8nmLkLREmO32NF7SezN3kn7UZI29gSEw2zk5YBa0aVY6umS1ABaKt5Ahkx6Qz8B2SpqdGFQV8bqASLCGwcdkXBaIL44gxbitC1txjgfQ6AKORWGJKF2ym4xY9EDRne7ODExbjdiFdVElumlJwtcNTiKoLEffm6QI1hmfEXqg4Sd3b8GBBQG7Fsyq6g8ISR17dioqXqfE2jRYpBJkWP2HfthwNrhDsCJ6Xj2O6ONi9jWzM1HHOlFm2TfMmujPhiVujhpDSS5vLdDjayX8BpBeeKnwz6Po6KUz01etrb2z8jwiJy9GB3F7RNKgEUWkFVuxBJpntmcqZF7NIVIiSWXEy2B7tRn6afnGtzp09oO1UsLvofZRiQcxV7tFgjBZ9mEssTsKSEQ92-2}   \end{align} and   \begin{align}   \begin{split}    C_{t_0,r,s}      &=    \bigcap_{\epsilon\in\mathbb{Q}_+}    \bigcup_{t\in(t_0+r,t_0+r+s)\cap \mathbb{Q}_+}    \{\omega:\alpha(\omega,t)\geq \overline M-\epsilon\}    =    \bigcap_{\epsilon\in\mathbb{Q}_+}    \bigcup_{t\in(t_0+r,t_0+r+s)\cap \mathbb{Q}_+}    \alpha_t^{-1}([\overline M-\epsilon,\infty))    ,   \end{split}    \llabel{ENx0N4gQ0ubFQGfRGM1dMTXJ53WWic7AlLkC76gLZm4Sn9zosCBe9fDlmE6Lq9lJY0heWK3oKvFizSvWqOwh8NvAIcoWO4QfR4CxSOGYXBYo2zigpP0nYii2uivvjhp9zpnfsROtkKm3afLxSh6D4eOoQ0TcRScNt9ZGUvxH0aRPrHhuZrJtGLHt533m8j3jJeSbCYI84loykM7iaY1pC09AJJ8puhnm6yTQO1WjgC5NvJJcAuK0Ib3YxCEleEHjdLl8L677tACt5w4MTfsZXC730oY7Lm9tGfKZQ3UFhPK7LqBEB1tnde3A05ZPCaVVMO4Wp1vl5NWYVAS3A6ppl8D702kndT3RzBToCoeQzgwwkUcHv6n8yGW6ADNujYLGNTMO4OrlG24pCbTFbaEWwLfvUlF7kpNGq0kmJd9voUPmdlK4yovDYNYPQhxO4g1sAZRKOsjpI9NvaBCW4EssGGiWLGR9RPwoE9NROT90cKFKyx5ZwzWBEnqOyITIPyL1ILo7eJfQdKiR1VJwiLx70OjSKHIShGIXvmm10FDxlINtLdHGzZ4OBmclD6VUmJvbO5I3EI1imd1hjxMLf9WNnq0cfLTeFzxKEQHL5u9r94VqDSQs1OhLadtGVbTmqRCt4RQKTJXDb5X7vc4gRO9owaiUdwAc4uIKr2eA0fjhHu7HukD1pbwm7YzSrpRxdBpLYAGi7aIh0aZot6yMJxsfBpw3JbbH4lIXf9s3jFDyyYSC8wNFPYZiO2AJBZKdaAD3huEQ92-3}   \end{align} where we denoted   \begin{equation*}    \alpha_t(\omega) = \alpha(\omega,t)     \comma \omega\in\Omega     ,   \end{equation*} for any $t\geq0$. Now, consider the equality~\eqref{EQ91}. Since the right-hand side has a positive probability, by $\mathbb{P}(\tau<\infty)>0$, there exist $t_0,r,s\in\mathbb{Q}_+$ such that $\PP(A_{t_0,r} \cap  B_{t_0,r,s} \cap C_{t_0,r,s})>0$. Set $\tilde \Omega_0=A_{t_0,r} \cap  B_{t_0,r,s} \cap C_{t_0,r,s}$, and the claim is established. \par Now, fix $t_0$, $r$, $s$, and $\tilde \Omega_0$ as in the claim. Consider the event   \begin{equation}    E=\bigl\{       \omega:       M_1\leq\alpha(t_0)\leq M_2      \bigr\}\in\mathcal{F}_{t_0}      .    \llabel{4WI0h1zMut1AyXXJ1IWtVOUvgjMEdr3H6ZASxZeGCNEG9U8XLLC8otnqMrr5llwno3yV0tSlPW1LjWvkj8W91VA8T6CXqWMs7WjvwZre71Buv1RkHBPRkBwhsucRyoHn8BL0fGPm3ANnQX1MqDgLrFJmPZ1sTLo46ZhffTFKGFDSIMoV0UtBu6d0TIkEwRX13S45chg6JyD2aHOEAbD98fDOYbXljuO3gIzi2BaEYczmnuOx94YNAzGeE0iHiqoXEtl8Ljgxh3GO5duEBzl0uoVOZLI0xiECVeNFhcQXgqqN2eSobAT4IXuflDOankxzPUd37eYwmvrud8eluLCBkwtTomGSSpWw7m10SlOpCJqqm6hQLau4qM8oyxlORGCT1csPUcGqV6zE9cPoOKWCTPvz9PMUTRd9e0f5g8B7nPURfC1t0u4HPVzeEgMRZ9ZVv1rWPIsPJf1FLiIa2TyZq8haK9V3qN8QsZt7HROzBAheG35fAkpgPKmIuZsYWbSQw6Dg25gM5HG2RWpuipXgEHDco7Ue6aFNVdPf5d27rYWJCOoz1ystngc9xlbcXZBlzAM9czXn1Eun1GxINAXz44Qh8hpfeXqdkdzuIRYMNMcWhCWiqZ5sIVyj9rh1A47uLkTwfyfryoiDx1enHSeip1vsO4KM6I5nql8i6XMK0rks0J59i2gzeHrjL1GbnITIBUV33u1He2frwh8SlMFfxURXLEeSH3mRUZPo4qNIEiCDpZPg9jUAiCbJ3yZnYviypDEQ76}   \end{equation} Then $u \mathds{1}_{E}$ solves our equation on the deterministic time interval $[t_0,t_0+r+s]$. But due to properties of the cutoff, it actually solves the deterministic heat equation on $[t_0,t_0+r+s]$, and thus $\alpha$ is nonincreasing on $(t_0,t_0+r+s)$ on the event~$E$. Hence,   \begin{equation}    \overline \Omega    =    \left\{     \omega\in E:     \alpha \geq (M_2 + \overline M)/2     \text{~at some point~}     t\in (t_0,t_0+r+s)    \right\}    \in\mathcal{F}_{t_0+r+s}\subseteq\mathcal{F}    \llabel{Y6UdI0D0Uvh8B5pzubybLzK6Okxiv1n6JFvXnAnLZkB2NBL7XrJXMMKs8CZsG212rnKHzisWyrbRdhAevTuCXC238cSHDCDRuSfLZ6OcD9ILJinlaki39ZA5i8M5PHQYRu9c4V1mhLVzeTTXVpgYHoa1Xj0tDpTKKrzAmUdJKYBVzYf2fuZHOiTzct88pOcWnFTvpWY7rmUX9mStlDy20kd9AusOJf9flOWhZwlWiHSoUyCdlxpPLdWIUhiT3LHokLLeFWnzMXpp1rcNzMdnmqvOOqqu8r9aHjZugynu1WR2Kt1bLAm4xFIJ56unzROpYf9QL0MlwkoJUIPqs7J4FuANaH5PLCga54bD9TmxtzHi82iDykpSWvkcvxuDk7rWAaUjbhIKIz4yML1wz7iFCxXO8nis8uKjb3uglgImLMaYSJSdAKU9AVHZ8G1n2uexpWzyZdj2gvCTeT5702uSeZ9BCuRQVhFJQNMZ3S2esCCUqV3CHgWPGx9gzqE69aQz5VOSuRPnXm57MllKGVzazF7XDlGQJ2Gawooq71nMVyltPDG7L0Ld0OErw0kLAYXZzR6XaeileVHuk7xcwyBOlV2JrQxWjPbeLaiIYqHqpbcvoLZVz1ytufBpcJ4MVecfeep7bry2UjHFLwzWFuMMdhVHxDX8Gsv0RQLK8YqbATHFEX0UkxjLiSKH3Rr5umoxSEQd0Myjy6DdKVFmCU7TfBI0xlsOWk20HafdxvncOzFEKskwmIHNHlUNhKYaDnErMsEQ77}   \end{equation} satisfies $\mathbb{P}(\overline \Omega)=0$. However, $\tilde \Omega_0\subseteq\overline \Omega$, which is a contradiction with $P(\tilde \Omega_0)>0$.  \end{proof} \par Let $\tau_k$ be the first time when $\sifpoierjsodfgupoefasdfgjsdfgjsdfgjsdjflxncvzxnvasdjfaopsruosihjsdgghajsdflahgfsif \vk\sifpoierjsodfgupoefasdfgjsdfgjsdfgjsdjflxncvzxnvasdjfaopsruosihjsdgghajsdflahgfsif_{L^{3}}$ hits $  \fractext{\bar{\epsilon}}{2^{k}}$, and $\sifpoierjsodfgupoefasdfgjsdfgjsdfgjsdjflxncvzxnvasdjfaopsruosihjsdjghajsdflahgfsif_k$ be the first hitting time of $\sifpoierjsodfgupoefasdfgjsdfgjsdfgjsdjflxncvzxnvasdjfaopsruosihjsdgghajsdflahgfsif \vk\sifpoierjsodfgupoefasdfgjsdfgjsdfgjsdjflxncvzxnvasdjfaopsruosihjsdgghajsdflahgfsif_{L^{6}}$ at~$M_k$. Also, consider the stopping times   \begin{equation}    \tau^{k}    = \tau_{1} \wedge \sifpoierjsodfgupoefasdfgjsdfgjsdfgjsdjflxncvzxnvasdjfaopsruosihjsdjghajsdflahgfsif_{1} \wedge \cdots \wedge \tau_{k} \wedge \sifpoierjsodfgupoefasdfgjsdfgjsdfgjsdjflxncvzxnvasdjfaopsruosihjsdjghajsdflahgfsif_{k}     \comma k\in\mathbb{N}_0    \label{EQ85}   \end{equation} and their limit   \begin{equation}    \tau=\inf_k \tau^k=\lim_k \tau^k    .    \label{EQ70}   \end{equation} Note that, for every $k\in\mathbb{N}_0$, the solution $\vk$ to \eqref{EQ34} satisfies   \begin{equation}   \sup_{t\in [0,\tau)}\sifpoierjsodfgupoefasdfgjsdfgjsdfgjsdjflxncvzxnvasdjfaopsruosihjsdgghajsdflahgfsif \vk(t)\sifpoierjsodfgupoefasdfgjsdfgjsdfgjsdjflxncvzxnvasdjfaopsruosihjsdgghajsdflahgfsif_{L^{3}}   \leq   \frac{\bar{\epsilon}}{2^{k}}   \comma k\in\NNz   \llabel{9hNUpc2L8ipcs8XwvjOeX35ehokgG6nyzyevqmZYDfdhQXEFk8qgF3J8SySTL1emZ1hqVGLs1rRD8Fs6u4NCoeyUCERC0xh4x67erCg8lf8J1Go29fRN6HMuUIwacxm2Af9sfbNov4CqQKcFloP7BRW85rUNd2BgxWkOpDr11KRKZlaHXQoN7OaohaotKxJGOUty96bCDK33hIEKUogH2RwEYISuKzS4pVz7BzdHrIjRgCvSJUs1x1D3RRqk9icqLUQ9TeTiCnifS9NpGBJFlPNUFbB5IU956UG9PrSw9oNNnvK4g3GFnUxtWPNXVvue5rAIN0eoRq2XpZNDRpQUwIjhrAXb13L7G4OSy8fznjOJBbaVLPazvFW8uCY5if8VkV0vydXHDDIU8ga2EpiAyYYWhIKYvQ5Xjd850u0AyrPIbkOEaxPZDHmR249GiAnrkNORVg45eQC6GzWVplxUGMUR37sseSzNrlavQzMRpUHdQ1jeJFhRqYlOTMQ1sTicKRHoxsD02Ase6ebYrNxUezPV1IhRwGxDgjslQdbWcoICqaGYMERvugV6kyVQl1l68rTm1okgKupWNIfnHZ4RmbdWEJGBaQ4CIXFDzckkQ1BqWuyqxN3N6kj6LoxcvV46tplSOTvyAQ4vQ0ZZVeCxCBvu6O4OkIGj6IXwLBiaouAtrSlFz795kew70VnFBnjEfMfiSep8xiXv83I36SZgoH8sv2BrtlOu8jTJ0ugcT08dORqnA2UzjWI4L5JjM3LQdgEQ80}   \end{equation} and   \begin{equation}   \sup_{t\in [0,\tau)}\sifpoierjsodfgupoefasdfgjsdfgjsdfgjsdjflxncvzxnvasdjfaopsruosihjsdgghajsdflahgfsif \vk(t)\sifpoierjsodfgupoefasdfgjsdfgjsdfgjsdjflxncvzxnvasdjfaopsruosihjsdgghajsdflahgfsif_{L^{6}}   \leq   M_k   \comma k\in\mathbb{N}_0   .   \llabel{sTyV3hNC72Q89Qb5bxur3w6rjyNl9kDte492PJf3TxTDd17VbnAPKptpusRQuHiBwvO39DUmCifhow4RUvFoejBw1JOq8mHPwCfIfh3uUxOVXLoz1E1d0V2KbeCLL9M3pMAkLuAcb6KUHnOXqdPGATF1Lrsh5tpx3OZIAD0H5nDGHlXBqTqgyAEA6pbVZcNjRE1BkH3omJFFjm9TJA7NUBmtg5ppAwIh190lJCmYeih6JWiCfyDWAByAbg61BS14QNzvZSQgqLjGvIW7Vz37vT4dZ92l3DdxftojupoeKITksc2uF2YnBJtMrrNsDJ0hPEK2h7KFiQmbEzr1TClt50d3LR9HDyUIetgwmyKv6NMDAzFDvMoxIYoMiKt9ZPAdaDYug53lgYerHdqgXX70KpLETmzD24CryGGRgzrNZVs78R7S0k0ji59HqHYiB0KvtZrqjtqBxRqbcLz69t0O7ceIlLvMkDc725QVgUMo6uO72ftjKNmef1dr5yiZKljQgHKgliZZQtkheh8wZ7ZRoSGVax9RKHQwgBEvzRwdGNcEPCGYhatb2jZ6XmUdNpYdg7l7sBfrhGmlueYkHA4vIesnrWIqxLSPzSnDFx7RPCfsGeziz44BxQVkm9edBAEhMdruUNIa03TUvnhkRmygOHTcTfiLkfUgYhruxfmyqLiFRozIcHkCC0kXQJo5C3jsfLMJe0vbW57SYQbbWW1ovoL2RRm7yIdmwpXqneEKrwBvjvor3pvUVDD8BBsuq9OobZEQ81}   \end{equation} \par Regarding the finiteness of $\tau$, we have the following lemma. \par \cole \begin{Lemma} \label{L10} Suppose that $\{\vk_0\}_{k\in\NNp}$ is a sequence of initial data of~\eqref{EQ34} satisfying the assertions of Lemma~\ref{L05}. For every $p_0\in(0,1)$,  if $\epsilon_0$ is sufficiently small relative to $\bar{\epsilon}$ and $M_k$ sufficiently large relative to $\MM_k$ (see their definitions in~\eqref{EQ17}, \eqref{EQ31}, \eqref{EQ35}, and \eqref{EQ36}), then $\PP(\tau<\infty)\leq p_0$. \end{Lemma}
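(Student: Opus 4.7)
The strategy is a union bound followed by Markov's inequality, fed by the global-in-time $L^3$ and $L^6$ estimates from Lemma~\ref{L07}. Because $\tau^k$ is decreasing in $k$ and $\tau=\inf_k\tau^k$, we have the decomposition
\begin{equation*}
\{\tau<\infty\}
=\bigcup_{k}\bigl(\{\tau_k<\infty\}\cup\{\sifpoierjsodfgupoefasdfgjsdfgjsdfgjsdjflxncvzxnvasdjfaopsruosihjsdjghajsdflahgfsif_k<\infty\}\bigr),
\end{equation*}
so it suffices to bound each of $\PP(\tau_k<\infty)$ and $\PP(\sifpoierjsodfgupoefasdfgjsdfgjsdfgjsdjflxncvzxnvasdjfaopsruosihjsdjghajsdflahgfsif_k<\infty)$ by a summable sequence whose total is at most $p_0$.

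First I would verify that the hypothesis of Lemma~\ref{L07} (namely the pointwise bound $\|u^{(k-1)}\|_{3}\le\epsilon$ a.s.) is satisfied uniformly in $k$. By Lemma~\ref{L09}, $\sup_t\|v^{(j)}(t)\|_3\le\bar\epsilon/2^{j-1}$ $\PP$-a.s.\ for each $j\in\NNz$, so by \eqref{EQ33} and the triangle inequality
\begin{equation*}
\sup_t\|u^{(k-1)}(t)\|_3
\le \sum_{j=0}^{k-1}\frac{\bar\epsilon}{2^{j-1}}\le 4\bar\epsilon,
\end{equation*}
which is admissible as the smallness parameter in Lemma~\ref{L07} provided $\bar\epsilon$ is small. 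Thus \eqref{EQ55} applies with $p=3$ and $p=6$.

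Next I apply Lemma~\ref{L07} with $p=3$ together with the bound $\|v_{0}^{(k)}\|_{L^{3}(\TT^3)}\le \epsilon_0/4^{k}$ for $k\ge 1$ from Lemma~\ref{L05}, which yields
\begin{equation*}
\sadklfjsdfgsdfgsdfgsdfgdsfgsdfgadfasdf\Big[\sup_{t\ge0}\|v^{(k)}(t)\|_3^3\Big]\le C\,\sadklfjsdfgsdfgsdfgsdfgdsfgsdfgadfasdf[\|v_0^{(k)}\|_3^3]\le C\,\frac{\epsilon_0^{3}}{64^{k}}.
\end{equation*}
Markov's inequality then gives
\begin{equation*}
\PP(\tau_k<\infty)=\PP\Bigl(\sup_t\|v^{(k)}\|_3\ge \bar\epsilon/2^k\Bigr)
\le \frac{C\epsilon_0^{3}/64^{k}}{(\bar\epsilon/2^{k})^3}=\frac{C}{8^{k}}\Bigl(\frac{\epsilon_0}{\bar\epsilon}\Bigr)^{3}.
\end{equation*}
Summing over $k\ge 1$ bounds $\sum_k\PP(\tau_k<\infty)$ by $C'(\epsilon_0/\bar\epsilon)^{3}$, which can be made smaller than $p_0/2$ by requiring $\epsilon_0$ sufficiently small compared with $\bar\epsilon$. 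The case $k=0$ (where $\|v_0^{(0)}\|_3\le 2\epsilon_0$ and the threshold is $\bar\epsilon$) is handled analogously by demanding $\epsilon_0\le \bar\epsilon/C$.

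For the $L^6$ thresholds I apply Lemma~\ref{L07} with $p=6$ and use $\|v_0^{(k)}\|_{\LLLis}\le \MM_k$ (see \eqref{EQ31}) to obtain
\begin{equation*}
\sadklfjsdfgsdfgsdfgsdfgdsfgsdfgadfasdf\Big[\sup_{t\ge0}\|v^{(k)}(t)\|_6^6\Big]\le C\,\MM_k^{6},
\end{equation*}
and Markov yields $\PP(\sifpoierjsodfgupoefasdfgjsdfgjsdfgjsdjflxncvzxnvasdjfaopsruosihjsdjghajsdflahgfsif_k<\infty)\le C\MM_k^{6}/M_k^{6}$. Choosing the threshold sequence so that $M_k\ge \MM_k\cdot (2^{k+1}C/p_0)^{1/6}$ makes $\sum_k\PP(\sifpoierjsodfgupoefasdfgjsdfgjsdfgjsdjflxncvzxnvasdjfaopsruosihjsdjghajsdflahgfsif_k<\infty)\le p_0/2$. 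Combining with the $L^3$ contribution produces $\PP(\tau<\infty)\le p_0$.

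The main obstacle is the circular-looking interplay between Lemma~\ref{L07} and Lemma~\ref{L09}: Lemma~\ref{L07} needs a uniform a.s.\ bound on $\|u^{(k-1)}\|_3$, which Lemma~\ref{L09} supplies only because the cutoffs $\psi_k$ already enforce it at the level of the truncated equation~\eqref{EQ34}. One must check inductively that the constants produced by Lemma~\ref{L07} are independent of $k$ (which is why the smallness is imposed uniformly on $\bar\epsilon$ and $\epsilon_\sigma$), and that the special index $k=0$---where $\|v_0^{(0)}\|_3$ is only known to be $\le 2\epsilon_0$ rather than geometrically small---does not spoil the summability. Everything else is bookkeeping: pick $\epsilon_0/\bar\epsilon$ small to control the $L^3$ tail, and pick $M_k$ growing fast enough relative to $\MM_k$ to control the $L^6$ tail.
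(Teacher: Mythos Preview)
Your proposal is correct and follows essentially the same route as the paper: apply Lemma~\ref{L07} for $p=3,6$, use Markov's inequality on each threshold, and sum a geometric series controlled by $\epsilon_0/\bar\epsilon$ (for the $L^3$ tail) and by $M_k/\MM_k$ (for the $L^6$ tail). Your explicit verification of the hypothesis $\|u^{(k-1)}\|_3\le 4\bar\epsilon$ via Lemma~\ref{L09} is a point the paper leaves implicit, and your treatment of the $k=0$ case is likewise something the paper absorbs into the general computation without comment.
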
 \colb \par \begin{proof}[Proof of Lemma~\ref{L10}] First, assume that $\bar\epsilon> 2\epsilon_0$ and $M_k > \MM_k$, so that $\sifpoierjsodfgupoefasdfgjsdfgjsdfgjsdjflxncvzxnvasdjfaopsruosihjsdgghajsdflahgfsif  v_{0}^{(k)}\sifpoierjsodfgupoefasdfgjsdfgjsdfgjsdjflxncvzxnvasdjfaopsruosihjsdgghajsdflahgfsif_{\LLLit}< \bar\epsilon/2^{k}$ and $\sifpoierjsodfgupoefasdfgjsdfgjsdfgjsdjflxncvzxnvasdjfaopsruosihjsdgghajsdflahgfsif   v_{0}^{(k)}\sifpoierjsodfgupoefasdfgjsdfgjsdfgjsdjflxncvzxnvasdjfaopsruosihjsdgghajsdflahgfsif_{\LLLis} < M_k$ for all $k\in\mathbb{N}_0$. Using Lemma~\ref{L07}, we infer that \begin{align} \begin{split} \sadklfjsdfgsdfgsdfgsdfgdsfgsdfgadfasdf\biggl[\sup_{0\leq t< \infty}\sifpoierjsodfgupoefasdfgjsdfgjsdfgjsdjflxncvzxnvasdjfaopsruosihjsdgghajsdflahgfsif\vk(t,\cdot)\sifpoierjsodfgupoefasdfgjsdfgjsdfgjsdjflxncvzxnvasdjfaopsruosihjsdgghajsdflahgfsif_p^p\biggr] \leq  C\sadklfjsdfgsdfgsdfgsdfgdsfgsdfgadfasdf[\sifpoierjsodfgupoefasdfgjsdfgjsdfgjsdjflxncvzxnvasdjfaopsruosihjsdgghajsdflahgfsif \vk_0\sifpoierjsodfgupoefasdfgjsdfgjsdfgjsdjflxncvzxnvasdjfaopsruosihjsdgghajsdflahgfsif_p^p] \comma p=3,6, \end{split} \llabel{Wmwy1ql338ivREeCOsJK7Tjax80TdcrxrU0WkrjidNQHLObsVEMR5ih9SmKjrgZCMDUD28M5TL5qTCSbIe9MYP3ARqOLxQH60dvw1XCvSnpTBcaBAMObspru0BDxjVUWWfMp10GNLtfRogR2TlC4KKhCbYsRCGh7FNpjgdSCHUzEKRsAcK2iZG9wFpgwCPUi7jh7fw5OfIJ6iw8EU7ECMBzzhKBEAOXFSz3qsSMv5pZldBJ6pQZcQ8ISDMzGh2GQDm2Rco69Z0zEEPSymYZK9vztjRhCxx980eqRB3C9DimZOyq936zi2X9pjxaf4T8T1yNdIFsdPMiU7gXdeHctkQGkJfjXs4uoBxZA2vY4sWxhqgNJoUNvlSsDq3xWsOjrc1Voi6eienlnGHf8UYCUexIgLa3wPm7A37TKTNSTsv5s2pLuyK6jXKqclcpnPEmofpXpXzVtuwpFO8Usppuucs3ZyVMGgG4zKXz1STngNjSRReL07cWjObWU2d1wq5k2UTAHguogqzVj9zR5cltRIRgGaJgQ2RRlMvdikH6UQiHfOJoTsCM5W2y6iNn1xpobqmpgwjyxoRsdmPboozEAyUoTmw1tQmqP9EwyL8HaGxAc0UhK2znvVWcv4xF5xAYXXjpJmE5fV2hUn6jt2Lt2npfeIdgLKL0xSUDMR51Rzg6r6qUCMcV24DAShdpmrmvene8eSZaP5BbbPPSB6eOsVJOV4dmUiZ2mEwiwTNt1JDDwWjT4mfF4jZC9hko7giefw6EQ82} \end{align} for some $C$ that is independent of~$k$, if $\bar \epsilon$ and $\epsilon_{\sigma}$ (see~\eqref{EQ30}) are sufficiently small. By Markov's Inequality,   \begin{align}   \begin{split}   \PP\biggl(\sup_{0\leq t< \infty}\sifpoierjsodfgupoefasdfgjsdfgjsdfgjsdjflxncvzxnvasdjfaopsruosihjsdgghajsdflahgfsif\vk(t,\cdot)\sifpoierjsodfgupoefasdfgjsdfgjsdfgjsdjflxncvzxnvasdjfaopsruosihjsdgghajsdflahgfsif_3\geq \frac{\bar{\epsilon}}{2^{k}}\biggr)   \leq    \frac{C\sadklfjsdfgsdfgsdfgsdfgdsfgsdfgadfasdf[\sifpoierjsodfgupoefasdfgjsdfgjsdfgjsdjflxncvzxnvasdjfaopsruosihjsdgghajsdflahgfsif \vk_0\sifpoierjsodfgupoefasdfgjsdfgjsdfgjsdjflxncvzxnvasdjfaopsruosihjsdgghajsdflahgfsif_3^3]}{\bar{\epsilon}^3/2^{3k}}   \leq    \frac{C\epsilon_0^3/4^{3k}}{\bar{\epsilon}^3/2^{3k}}   \leq    \frac{C}{2^{k}}   \left(    \frac{\epsilon_0}{\bar\epsilon}   \right)^{3}   \leq   \frac{p_0}{2^{k+2}}   \comma 
  k\in\mathbb{N}_0   ,   \end{split}    \llabel{w1vmlCzxs6ijEv9M3OtrOxW2pkMWi7zFKYpA0DcFZpnNS1Lk1rPOTNRZNpe0OWgJK5agU63LxqcHw0iSBcYqNRaD38nPvGHselXU3x8bUZuje0xiRm3iRAbwtJBlaV4M2PJMq7jb3KYDNMbl3CVKrZZDRnV4A65HtZiRodJP0T4NFm1G3ukOBMuK52ljg5v9tBsFjgCRa68HjYa694ZjFJ6CVArUC2OVqxQcnOtix4DXkXaU86p3k1dQqfJUcTYg9MU1RGZFJPXYBCZpgFVfYWG2vVeUqL8QD5IkcRtJuuS2jV2j9re9nJq1j1LPnGXB77qxq0Y2azEuZ4u441ZDhP2c6ltZfe7sMqQEKxXmdTmc3NG1cbV4nmp3bGwOzqXPVe7sEDvoeN9DmlBeZGcrMyDBgyvyBb0YaeHbb2PlECWXTTua3s6XI7ft2Ah6eEUiUur3aiIjiIQx2c3fDbvdyB7wo4oR3i1lrvQS8HjwHsbZzQ47M5Upnd0q5kKW6ZWnGPJtIEdnmGALnep3huZwDANUo8G1vcBzxXVdpxFBpUJlItAEBDsjvloldGbWbqs2ZYgwIZ5URD3kPbYDmbFN9fu026aB6pz5DgpDwrEGQ0Fqd0SE0sQdQAr8OTgO4fpavWhGZygRhreK5l6IFBWjy60VkeHEnXbH5mSL45RJe4oS9aMVTcaRLS768nDfgaCeH9JEvoHAo0guEz8XScEWRDNN53wJbB2uFhfL6zuTGzMbitUjWdxZ6a2TIfhtIyuR5IEQ83}   \end{align} provided $\epsilon_0$ is small enough relative to~$\bar{\epsilon}$. In addition, we can choose $M_k$ large relatively to $\MM_k$ so that  \begin{align} \begin{split} \PP\biggl(\sup_{0\leq t< \infty}\sifpoierjsodfgupoefasdfgjsdfgjsdfgjsdjflxncvzxnvasdjfaopsruosihjsdgghajsdflahgfsif\vk(t,\cdot)\sifpoierjsodfgupoefasdfgjsdfgjsdfgjsdjflxncvzxnvasdjfaopsruosihjsdgghajsdflahgfsif_6\geq M_k\biggr) \leq  \frac{C\sadklfjsdfgsdfgsdfgsdfgdsfgsdfgadfasdf[\sifpoierjsodfgupoefasdfgjsdfgjsdfgjsdjflxncvzxnvasdjfaopsruosihjsdgghajsdflahgfsif \vk_0\sifpoierjsodfgupoefasdfgjsdfgjsdfgjsdjflxncvzxnvasdjfaopsruosihjsdgghajsdflahgfsif_6^6]}{M_k^6} \leq  \frac{p_0}{2^{k+2}} \comma  k\in\mathbb{N}_0 . \end{split}\llabel{gpZCA5HZMZT9SvLIqSwDe8gBxlEdFkyj2QyuYzwlwpxYas2Xz8vCmy2027Jm0LHDMi8X5I255a2UfDR4mcoTUjWxZ3ZJ03alP15KPWffdCPQvOfakMNcpuVDt4cjR1oY7dqRGJvGT0ikreblMGbaNXGb8OEx6aIsztj7eeTt9OKLBFuMQbPLyqTdkjPdF7DNGeSMBOEYZ0aHFSUGw6GNIl3rCv2gKZvonEoA4IixK1D6RmzZgMEtWfXdAB9HMHCjO0woSnFR5ouCA2ug9Qr9w68xDKat9r3pnFjuBuaLmuZkyxCOZpfAbl4tXoggaAaws7Te4IOHrgzIvJq03hLQ585CRSLUZAxXz6RjOpeSx7BRT7txkA3dYoipdOpjMYASRDUXR02w2BAyxHnxBn5huR3MgQzB8mVlx3oBYOTTUMygvzKk6xrgiJZBGwQeoe3ToBsNhgzl64FVnOdf74cinPCeqU7hVu03CSpkpK0SZ9faBxRdRWaIz7Qlt63qkh0bbrEZ2FzUfvSD29l3cwm9uA1zYlTAONVdcOOfYcDS6Hgl4QNmCuPdgFEpuc4nz4vO8N6HC52kSn7ZMrzuz2zK9tk6DAjaUW6vghspP3MDTE9TD8WbIreHTM8VaqwE1kMM3vDcFVic5wDMXThn1NdqgIJZEwPXjazE6sa9UMIlsAZS4EF2vehcxfoAuLVxunTvRTKawGcHoviaXbtCuyMVT7DQdbuGB6WemA8AO9XEFkO8f6ArkAbi397pw9UoR7FCtPEQ84} \end{align} Then, \begin{equation} \mathbb{P}(\tau_k<\infty) \leq \frac{p_0}{2^{k+2}} ~\text{ and }~ \mathbb{P}(\sifpoierjsodfgupoefasdfgjsdfgjsdfgjsdjflxncvzxnvasdjfaopsruosihjsdjghajsdflahgfsif_k<\infty) \leq \frac{p_0}{2^{k+2}} \comma k\in \mathbb{N}_0.    \llabel{GytKq01TEmnbbNVQOuWDUJdgzsaNJLPMAg1EYKs7vNI45FZxQzabloG5bT4UMeFFsgh4PzviDrNUmfuDEumNFpNWbgBdstqQkxVgebWnkXxwEx5mnT5ClbHNY3dv7bvSXqVAKwZq9OZCVhcnFplqS46GIDIS6gByGMKMuec2QsaXEt6nqtuzWxEMcmWcY7Yo1TKsEhXIlBg4dU6fdJT1HYhGyNpqvl2LQazFBoUQpdn8OiN1cGW8lFI3sKQR4EpX4Sfs7d6xxt8hoY8FtaiiwWROI3r1shgHgY6CFK2738bWZCkOb3oNdMQ3Ex3PGeYrE1574bD7yGiOpfQeHH5HO7RQtOJ9ZZpbtYpaHlL9C0pbmVtRSVOzKACwQMj4Q4o4eJArRdciatphUeJADtQdkWk6m70KApmTV15qYnboDBktnk4ukg8SPIjOm3loj895BhImn8bXAUAmLWRxp2cOFiWNjXPoqOHNnZN9WqkEUNHh13JHCpYI5qYrOwvGfgUxhi4UTdN3VMSiRTvvH7xsEtlI5jkmHa35re1gHeq4LViMpiUNoWhTp9zvYhGulekd0rsCBZfjeHQGyhWiEQX1Jgrj0L9iprpDqFNiZsFSBY0Jz81ojF3V4dZnehUR0JXtNXswcRq7BspAp8od99NsP3ssZex8aSlCgDdurUaYHI1SdqCnaMKPrTWQCROzhuZqHxPEGLM9BBfabHUiSq4Qy1LPhQWaIWYo9IakJbRNCxHhp6eiWv7p7JUJgSBNgUSVZsEQ63} \end{equation}  Thus the stopping times $\tau^{k}$, defined in \eqref{EQ85}, satisfy   \begin{align}   \begin{split}    \PP(\tau^k<\infty)    &\leq    \sum_{j=0}^{k}    \mathbb{P}    \left(    \sup_{0\leq t< \infty}\sifpoierjsodfgupoefasdfgjsdfgjsdfgjsdjflxncvzxnvasdjfaopsruosihjsdgghajsdflahgfsif\vj(t,\cdot)\sifpoierjsodfgupoefasdfgjsdfgjsdfgjsdjflxncvzxnvasdjfaopsruosihjsdgghajsdflahgfsif_6\geq M_j    \right)    +    \sum_{j=0}^{k}    \mathbb{P}    \left(    \sup_{0\leq t< \infty}\sifpoierjsodfgupoefasdfgjsdfgjsdfgjsdjflxncvzxnvasdjfaopsruosihjsdgghajsdflahgfsif\vj(t,\cdot)\sifpoierjsodfgupoefasdfgjsdfgjsdfgjsdjflxncvzxnvasdjfaopsruosihjsdgghajsdflahgfsif_3\geq \frac{\bar{\epsilon}}{2^{j}}    \right)    \leq p_0     \comma k\in\mathbb{N}_0    ,   \end{split}    \llabel{HUJ3oeV1uWw9Aeeixbp8Axt5ZgQPREAQsA0iwxlNK0td3JrPygKrvUR0YXdIlWwJdEao9WMuReYfQUqUoebEgHTLIcL62xVJZhnGxEHyBRff4wT9LuePpwjrpRogWJ0Iv6sVWMIDiLzotM3ZrY4UKtDEJo2JotOUhTMblm3oNAr4FUYddaug68TX9BgoI0N0C5ySekTx6JxUvudb3KmMocy1WtAyC1uaC9rD3l579WJJ59LQaO7hxpDHsE1LtilFTP6jhASQePrSiosf5TsUVjOPX5qZZo0m0dzwu6ctAw6a3boSxFqDWJauvpSgcgUo3aPoBwWE2BJ9GAlK3c0ujcsQFNb4yCzpRUiQeZEDLIkzqkkEGLKC45BngCXim0yQL5UY0jWzkX2gvw2ui3S30GXpRRuRIOYERuRMD0bBNOb6yvIpeNpLKyV1DqnTur1ckV4yM5Z0dh7u9KmsrPGrxRkRPb09I1YXI9IMf6kPUDYOz9P535CU39tLxBPHIq5gL4wd3S87hiGY0diFePnJBIRNwR60scpaOS5GkEpCyWMEXyMH0v86FRcr8aQYquO5ZZda1IfuGzD9TN1PQLNYFtxB6NX3yOIAULFEswa9VuVUzvoN7i8Au67gdRnyN3ntXN75CDXmhDVWSDzclZWQ3D7FniEs4KRwvAMdkgXKS8feURSISDF3KoWBCeW8fkYNgH8ESnonm8fw8AhCulfSYR4LvuF5xYNuBFM0s8Q9fVrVjS6qkEdI9B7AaABnFEHyQ4EQ86}   \end{align} completing the proof.  \colb \end{proof} \par \begin{Remark}\label{R02} Without imposing the smallness of $\epsilon_\sigma$ in the assumption~\eqref{EQ30}, we would obtain \eqref{EQ62} instead of \eqref{EQ55} in Lemma~\ref{L07}. Correspondingly, we would arrive at $\PP(\tau<T)\leq p_0$ in Lemma \ref{L10}. \end{Remark} \par \cole \begin{Lemma}   \label{L08} Let $k\in\mathbb{N}_0$. Assume that $\vk_0$ is an initial datum of~\eqref{EQ34} satisfying the assertions in Lemma~\ref{L05}. With $\tau$ as in \eqref{EQ70}, we have   \begin{equation}    \PP(\tau>0)    = 1    ,    \label{EQ79}   \end{equation} provided that $0<\bar\epsilon\ll 1$, $\epsilon_0$ is sufficiently small relative to $\bar{\epsilon}$, and $M_k$ sufficiently large relative to $\MM_k$; see their definitions in~\eqref{EQ17}, \eqref{EQ31}, \eqref{EQ35}, and \eqref{EQ36}. \end{Lemma}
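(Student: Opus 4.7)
The plan is to combine continuity of the trajectories at $t=0$, which gives $\tau_k>0$ and $\rho_k>0$ individually, with a Borel-Cantelli argument, which collapses the infimum $\tau=\inf_k \tau^k$ into an almost surely finite minimum.

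First, I verify strict initial gaps. Lemma~\ref{L05} provides $\|v_0^{(k)}\|_3 \le \epsilon_0/4^k$ for $k\ge 1$ and $\|v_0^{(0)}\|_3 \le 2\epsilon_0$ almost surely. The hypothesis $\bar\epsilon>2\epsilon_0$ then gives the strict bound $\|v_0^{(k)}\|_3 < \bar\epsilon/2^k$ with probability one, for every $k\in\mathbb{N}_0$. Likewise, the hypothesis $M_k>\MM_k$ and \eqref{EQ31} yield $\|v_0^{(k)}\|_6 < M_k$ a.s. By Lemma~\ref{L06}, $v^{(k)}\in L^6(\Omega;C([0,T],L^6))$, so the paths $t\mapsto \|v^{(k)}(t)\|_6$ and, via the embedding $L^6(\TT^3)\hookrightarrow L^3(\TT^3)$, also $t\mapsto \|v^{(k)}(t)\|_3$ are continuous on $[0,T]$ almost surely. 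Combining the strict initial gap with continuity yields $\tau_k>0$ and $\rho_k>0$ almost surely, for each fixed $k$. Taking a countable intersection of these full-probability events, there is a $\PP$-null set $N$ off which $\tau_j(\omega)>0$ and $\rho_j(\omega)>0$ simultaneously for every $j\in\mathbb{N}_0$.

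Second, I apply the estimates inside the proof of Lemma~\ref{L10}. Under the present smallness of $\epsilon_0$ relative to $\bar\epsilon$ and the largeness of $M_k$ relative to $\MM_k$, that proof produces
\[
\PP(\tau_k<\infty) \le \frac{p_0}{2^{k+2}},\qquad \PP(\rho_k<\infty)\le \frac{p_0}{2^{k+2}},\qquad k\in\mathbb{N}_0,
\]
so $\sum_{k\ge 0}\PP\bigl(\{\tau_k<\infty\}\cup\{\rho_k<\infty\}\bigr) \le p_0/2<\infty$. The Borel-Cantelli lemma then provides a random index $K(\omega)<\infty$ almost surely such that $\tau_k(\omega)=\rho_k(\omega)=\infty$ for every $k>K(\omega)$.

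Third, I conclude. For $\omega\notin N$ such that $K(\omega)<\infty$, the definition \eqref{EQ85} gives $\tau^k(\omega)=\tau^{K(\omega)}(\omega)$ for every $k\ge K(\omega)$, since adjoining $\infty$'s to a minimum does not change it. Consequently $\tau(\omega)=\lim_k\tau^k(\omega)=\tau^{K(\omega)}(\omega)$, which is a finite minimum of the positive numbers $\tau_1(\omega),\rho_1(\omega),\dots,\tau_{K(\omega)}(\omega),\rho_{K(\omega)}(\omega)$ from the first step. Therefore $\tau(\omega)>0$, proving \eqref{EQ79}.

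The main obstacle is the coupling in the first step: the continuity of the trajectories and the strict initial inequality must hold simultaneously for all $k$, which is handled by the countable intersection above. All other ingredients are direct consequences of the previously established Lemmas~\ref{L06}, \ref{L07}, \ref{L09}, and the Markov-type bounds established in the proof of Lemma~\ref{L10}.
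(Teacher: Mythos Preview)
Your proof is correct and takes a genuinely different route from the paper's. The paper reruns the $L^p$-increment estimate of Lemma~\ref{L02} on a short window $[0,\delta]$ to obtain a \emph{uniform-in-$k$} quantitative bound $\PP(\tau^k<\delta)\le C\delta$; sending $k\to\infty$ and then $\delta\to 0$ gives $\PP(\tau=0)=0$. You instead recycle the summable global tail bounds $\PP(\tau_k<\infty)+\PP(\rho_k<\infty)\le p_0/2^{k+1}$ already obtained inside the proof of Lemma~\ref{L10}, apply Borel--Cantelli to reduce the infinite infimum to a finite minimum $\tau=\tau^{K(\omega)}$, and then use pathwise continuity plus the strict initial gaps to conclude each of the finitely many $\tau_j,\rho_j$ is positive. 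Your approach is softer and shorter, requiring no new energy estimate; the paper's approach yields the slightly stronger quantitative statement $\PP(\tau<\delta)\le C\delta$ and does not rely on summability of the tail probabilities in~$k$, only on their uniform smallness in~$\delta$.
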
 \colb \begin{proof}[Proof of Lemma~\ref{L08}] \par We apply Lemma~\ref{L02} componentwise to the equations~\eqref{EQ34} on $[0, \delta]$, where $\delta\in(0,1]$ is small, obtaining 	\begin{align} 	\begin{split} 	&\sadklfjsdfgsdfgsdfgsdfgdsfgsdfgadfasdf\biggl[\sup_{0\leq t\leq \delta}\sifpoierjsodfgupoefasdfgjsdfgjsdfgjsdjflxncvzxnvasdjfaopsruosihjsdgghajsdflahgfsif\vk_j(t,\cdot)\sifpoierjsodfgupoefasdfgjsdfgjsdfgjsdjflxncvzxnvasdjfaopsruosihjsdgghajsdflahgfsif_p^p-\sifpoierjsodfgupoefasdfgjsdfgjsdfgjsdjflxncvzxnvasdjfaopsruosihjsdgghajsdflahgfsif\vk_{0,j}\sifpoierjsodfgupoefasdfgjsdfgjsdfgjsdjflxncvzxnvasdjfaopsruosihjsdgghajsdflahgfsif_p^p 	+\sifpoierjsodfgupoefasdfgjsdfgjsdfgjsdjflxncvzxnvasdjfaopsruosihjsdfghajsdflahgfsif_0^{\delta}  \sifpoierjsodfgupoefasdfgjsdfgjsdfgjsdjflxncvzxnvasdjfaopsruosihjsdfghajsdflahgfsif_{\TT^3} | \nabla (|\vk_j(t,x)|^{p/2})|^2 \,dx dt\biggr] 	\\&\indeq 	\leq C 	\sadklfjsdfgsdfgsdfgsdfgdsfgsdfgadfasdf\biggl[ 	\bar\epsilon\sifpoierjsodfgupoefasdfgjsdfgjsdfgjsdjflxncvzxnvasdjfaopsruosihjsdfghajsdflahgfsif_0^{\delta}  	\sifpoierjsodfgupoefasdfgjsdfgjsdfgjsdjflxncvzxnvasdjfaopsruosihjsdgghajsdflahgfsif \vk\sifpoierjsodfgupoefasdfgjsdfgjsdfgjsdjflxncvzxnvasdjfaopsruosihjsdgghajsdflahgfsif_{3p}^{p} 	\,dt 	+ \epsilon_{\sigma}^2 	\sifpoierjsodfgupoefasdfgjsdfgjsdfgjsdjflxncvzxnvasdjfaopsruosihjsdfghajsdflahgfsif_0^{\delta}\sifpoierjsodfgupoefasdfgjsdfgjsdfgjsdjflxncvzxnvasdjfaopsruosihjsdlghajsdflahgfsif_k^2 \sifpoierjsodfgupoefasdfgjsdfgjsdfgjsdjflxncvzxnvasdjfaopsruosihjsdkghajsdflahgfsif_k^2  	\sifpoierjsodfgupoefasdfgjsdfgjsdfgjsdjflxncvzxnvasdjfaopsruosihjsdgghajsdflahgfsif \vk\sifpoierjsodfgupoefasdfgjsdfgjsdfgjsdjflxncvzxnvasdjfaopsruosihjsdgghajsdflahgfsif_{p}^{p}\,dt 	\biggr] 	\commaone j=1,2,3 	, 	\end{split} 	\llabel{FE3p2zNYbeCWWsjICZdMiQxwExryPWHPEdyQb0vtY4GOLLAXSYFDdvhbwojup5CPrRfoTWAfFdeQy4QwgGS8Ss4K6UrdkhKQBOIMpMAPsjTHPwz4WqITmhrrrrh2ZJ2AmbTsaJHj3QxIxTN0HyXIgrXl7sQdnBWrzolZxCCwSlP6b52rqIT8yOeZKUW8iASz8WKiu3bjhg1zCxce2fpagwSyoiD3YxMRrbrprB8oFpJMtItcgQCJD8qQPX3tFyOyK9kqYsp1He43FFjfTYAoH9TrsZulJLfz5PN9MwOwlhKXLn4BWKHpc5r7rV3Ivf2qAk0ckyWiPt2Oo9BTfM5cXjaltIM7PPtTpRX1OysUMBp8XCuzgAd0NKAvdil19GwZVPrGLoGONxeU03Sm0BA1cxyfKGITFLNfgWPMpfRmO63LFMDUmz7qGhj190n34Oc9gieJVzMNOgAhfGTqBpw53q4eyb5J8ZeUNT8R38gKs708pFfj1ELgKiz0EACdAlPNN4okghcRHpmUuoeX4JJ2jtEXNXPRfb9FbN29ZtYUeX5EK9Lv22fvW5Q0VI0sq1IR1a5aojpZWyEKcOLaRUd2wxsHXws4XSrIVgWKywytyA00FoMN8est52u9fFKESCiP7WBvbjOhWuZIU5qDYQZOYVc1m2uyyWrATV9JIVCcILzr0XfXvLRjHkQ7dvQ5qQkKbUddVj2NgD9SbOe2NDztnIjaLQEEnLY4K0Do66oKZnE5tg9p8WgFJrymnIR4JuqanKEQ64} 	\end{align} for $p\in\{3,6\}$. Assuming that $\bar{\epsilon}$ is sufficiently small, this estimate implies  	\begin{align} 	\begin{split} 	&\sum_j \sadklfjsdfgsdfgsdfgsdfgdsfgsdfgadfasdf\biggl[ 	\sup_{0\leq t\leq \delta} 	\sifpoierjsodfgupoefasdfgjsdfgjsdfgjsdjflxncvzxnvasdjfaopsruosihjsdgghajsdflahgfsif\vk_j(t)\sifpoierjsodfgupoefasdfgjsdfgjsdfgjsdjflxncvzxnvasdjfaopsruosihjsdgghajsdflahgfsif_p^p-\sifpoierjsodfgupoefasdfgjsdfgjsdfgjsdjflxncvzxnvasdjfaopsruosihjsdgghajsdflahgfsif\vk_{0,j}\sifpoierjsodfgupoefasdfgjsdfgjsdfgjsdjflxncvzxnvasdjfaopsruosihjsdgghajsdflahgfsif_p^p 	\biggr] 	\leq C 	\epsilon_{\sigma}^2 	\sadklfjsdfgsdfgsdfgsdfgdsfgsdfgadfasdf\biggl[ 	\sifpoierjsodfgupoefasdfgjsdfgjsdfgjsdjflxncvzxnvasdjfaopsruosihjsdfghajsdflahgfsif_0^{\delta}\sifpoierjsodfgupoefasdfgjsdfgjsdfgjsdjflxncvzxnvasdjfaopsruosihjsdlghajsdflahgfsif_k^2 \sifpoierjsodfgupoefasdfgjsdfgjsdfgjsdjflxncvzxnvasdjfaopsruosihjsdkghajsdflahgfsif_k^2  	\sifpoierjsodfgupoefasdfgjsdfgjsdfgjsdjflxncvzxnvasdjfaopsruosihjsdgghajsdflahgfsif \vk\sifpoierjsodfgupoefasdfgjsdfgjsdfgjsdjflxncvzxnvasdjfaopsruosihjsdgghajsdflahgfsif_{p}^{p}\,dt 	\biggr] 	\comma p\in\{3,6\}. 	\end{split} 	\llabel{B0Tu9gTTdhmFWJr7SkcmRgSVZBHXMKPdve2TKbYwe1w6LjLt3fqNqPNrkL9xZ312OlYPJzYHIR9FOwtOda3azKYIZmfOpiDP2UY1xdXMxnx0FHt0Z7KK5meIiZqgEIAVqkXuCIau0ge7syt7wPL1Somn8trVNAP5vOeApza4OmcKk2KSIDKbjSH9Rt0UKyyFShxSWRzxSsuMN2vFJgdTwYNCC7fYPYG52l9DuZ30jM3nxAAjYnsbz3ZREXuoEuVh6vUiKQXl9xplK4pv4r1SOF3gu9JSquV3aSxxP4r2sqmh8Vnc5lAnLI692OJ3I9hfl4tXzaJe6GPjnSEJW9IKlRk7tDlXhKdgKqL5vJLR7DGF4USAIttBcO5DLELRkZR6VJCCuPGSsCMuMUqIge2P0YmcB39mmPK7m3iNo3LPlKT6DxnGuwVqMZrP9zdW0MIXlRsCvy2x5hWn7UxgtJMGkQSm5W7JEQfvmB2Gi6cXhyQt1WjTdz4Samd0QLh4lfRnGntTOT6YaQsAyNoew52Km6kidUykKV0tntPTpBygKcOhcx77EkXSLocdpouOwaCQ3fNM4vtwIJvNz7dwFNviTOv3Medg8iLhaM3kdcjDT30paBswxFVIaXyjw0jOSbROdY3e3Phl4qm1wtaM08QrmGBISKqyoygoFISt0tQp0pZ1QQJknImQg436gpzdYuwl5msooOh9l4Z25dXGcSBaavBCelg09qWH49vDRFPE6SRgMSrOEiFDqSmv7MqC8n0B4OEQ65} 	\end{align} 	Without loss of generality, we may assume that $0<\epsilon_0\ll \bar\epsilon$, so that $\sifpoierjsodfgupoefasdfgjsdfgjsdfgjsdjflxncvzxnvasdjfaopsruosihjsdgghajsdflahgfsif v_{0}^{(k)}\sifpoierjsodfgupoefasdfgjsdfgjsdfgjsdjflxncvzxnvasdjfaopsruosihjsdgghajsdflahgfsif_{\LLLit}\leq \bar{\epsilon}/4^{k+1}$. Then, using Markov's inequality, 	\begin{align} 	\begin{split} 	& 	\PP\biggl( 	\sup_{t\in [0, \delta]} \sifpoierjsodfgupoefasdfgjsdfgjsdfgjsdjflxncvzxnvasdjfaopsruosihjsdgghajsdflahgfsif\vk (t)\sifpoierjsodfgupoefasdfgjsdfgjsdfgjsdjflxncvzxnvasdjfaopsruosihjsdgghajsdflahgfsif_3 	\geq   	\frac{\overline\epsilon}{2^{k}} 	\biggr) 	\leq \sum_j 	\PP\biggl( 	\sup_{t\in [0, \delta]} \sifpoierjsodfgupoefasdfgjsdfgjsdfgjsdjflxncvzxnvasdjfaopsruosihjsdgghajsdflahgfsif\vk_j \sifpoierjsodfgupoefasdfgjsdfgjsdfgjsdjflxncvzxnvasdjfaopsruosihjsdgghajsdflahgfsif_3 	\geq  	\frac{\overline\epsilon}{3\cdot 2^{k}} 	\biggr) 		\\&\indeq 	\leq  	\sum_j \PP\biggl( 	\sup_{t\in [0, \delta]} \sifpoierjsodfgupoefasdfgjsdfgjsdfgjsdjflxncvzxnvasdjfaopsruosihjsdgghajsdflahgfsif\vk_j \sifpoierjsodfgupoefasdfgjsdfgjsdfgjsdjflxncvzxnvasdjfaopsruosihjsdgghajsdflahgfsif_3^3-\sifpoierjsodfgupoefasdfgjsdfgjsdfgjsdjflxncvzxnvasdjfaopsruosihjsdgghajsdflahgfsif\vk_{0,j} \sifpoierjsodfgupoefasdfgjsdfgjsdfgjsdjflxncvzxnvasdjfaopsruosihjsdgghajsdflahgfsif_3^3 	\geq \left( 	\frac{\overline\epsilon}{3\cdot 2^{k}} 	\right)^3 	- 	\left( 	\frac{\overline\epsilon}{4^{k+1}} 	\right)^3
	\biggr) 		\\&\indeq 	\leq  \sum_j \PP\biggl( \sup_{t\in [0, \delta]} \sifpoierjsodfgupoefasdfgjsdfgjsdfgjsdjflxncvzxnvasdjfaopsruosihjsdgghajsdflahgfsif\vk_j \sifpoierjsodfgupoefasdfgjsdfgjsdfgjsdjflxncvzxnvasdjfaopsruosihjsdgghajsdflahgfsif_3^3-\sifpoierjsodfgupoefasdfgjsdfgjsdfgjsdjflxncvzxnvasdjfaopsruosihjsdgghajsdflahgfsif\vk_{0,j} \sifpoierjsodfgupoefasdfgjsdfgjsdfgjsdjflxncvzxnvasdjfaopsruosihjsdgghajsdflahgfsif_3^3 \geq \frac{1}{2^5}\left( \frac{\overline\epsilon}{2^{k}} \right)^3 \biggr) 	\\&\indeq 	\leq  	C\left( 	\frac{\overline\epsilon}{2^{k}} 	\right)^{-3} 	\epsilon_{\sigma}^2 	\sadklfjsdfgsdfgsdfgsdfgdsfgsdfgadfasdf\biggl[ 	\sifpoierjsodfgupoefasdfgjsdfgjsdfgjsdjflxncvzxnvasdjfaopsruosihjsdfghajsdflahgfsif_0^{\delta}\sifpoierjsodfgupoefasdfgjsdfgjsdfgjsdjflxncvzxnvasdjfaopsruosihjsdlghajsdflahgfsif_k^2 \sifpoierjsodfgupoefasdfgjsdfgjsdfgjsdjflxncvzxnvasdjfaopsruosihjsdkghajsdflahgfsif_k^2  	\sifpoierjsodfgupoefasdfgjsdfgjsdfgjsdjflxncvzxnvasdjfaopsruosihjsdgghajsdflahgfsif \vk\sifpoierjsodfgupoefasdfgjsdfgjsdfgjsdjflxncvzxnvasdjfaopsruosihjsdgghajsdflahgfsif_{3}^{3}\,dt 	\biggr] 	\leq 	C\delta \left( 	\frac{\overline\epsilon}{2^{k}} 	\right)^{-3} 	\left( 	\frac{\overline\epsilon}{4^{k+1}} 	\right)^{3} 	\leq \frac{C \delta}{2^{3k}} 	. 	\end{split} 	\llabel{ujjipuVia053hgXhQqEIzH7cbOABdHwOdTEXvoykC4wrajhzqnquDojXl0Pz7UtybiOOsEINnWEXehc70MvZ7n6V9qYzryOYuiqBEmXaktOJI8ZAdNdvleWiAh0gwDT6fWH6x8uSAjGkKuV6bwiUfcfOWjyzWdRn7KpFuCAHIogHdU3WkbDq1yV8oMfJskTSJAFV4ehtkSEpCWQaSgnZkk3aeesYU7TC7DjdycEy6YhXSt7PPC75VyXVX0033ywEoy37OVWeFB6f5A4zI5U5FYFTvRFmZbF16anRnJcMwFJuXCgDVU7FkanwUZqfhivmvYiUAZaiJLeDmZbMKcvxRNJpwxkgIOqH6pTR821OCO97sijRlLagXkCrxnH5nGq2b8TD1QLjwMUG7wLB8hV0tcfNtxrtAKTbTV70Cus2Z9zWbi3fBhUBueLctDXuk88jYpxJzuARu5GQSMoTAOBuuByUhESR0IVrVGFcH0rtSLRCVouoU5QbOUKP8WaFjZ0RLHqyX8xXBLOwUtkiEwWG9fmn7Ifh4MwTpuLWjOVCmpvGnoyHPV3Q2xzY5iDGuMIyY2zHhLTMRUHz9PrIp7eqfjbQMhsqm7f21zapbnCNGWxjGJAHfCykEOXMfRkQMSOnRmHDslDBlTsX1OYIh6VnhtORVgXFclHw0MsU8taIoG1w40uBD1sOoHWK3IngjGMdIjSUJarrbi4PVOmzDgPpUKxjKv1BOITyQEs2wltfw1QmqU5NgZs044dVr2FikdG6eqEQ68} 	\end{align} Observe the geometric convergence in $k$ of the right-hand side along 	with an arbitrarily small expression~$\delta$. 	Using a similar derivation for the $L^{6}$ norm rather than $L^{3}$ norm, 	we get 	\begin{align} 	\begin{split} 	&\PP\biggl( 	\sup_{t\in [0, \delta]} \sifpoierjsodfgupoefasdfgjsdfgjsdfgjsdjflxncvzxnvasdjfaopsruosihjsdgghajsdflahgfsif\vk \sifpoierjsodfgupoefasdfgjsdfgjsdfgjsdjflxncvzxnvasdjfaopsruosihjsdgghajsdflahgfsif_6^6 	\geq  M_{k}^6 	\biggr) 	\leq 	\frac{C \delta}{2^{3k}} 	, 	\end{split} 	\llabel{DEgvajQFWQPmQeNscdxSAyhqL2LfTUWHJoEsYAre3LvakeYHXXZEbdJodq2jjufZFwkFwMLCiBAzKslRU0wObDZ08O1IxMMACdT6Cuj3WOgHbMfWXYKctRoIofd0EECQVbFHq3GVsjjJoPiGcrg9Oa2u5s6j2W5AphHhdSrLt2Ctx8DHzuZXSEjHgy397e86ALC5AbRC7FQ4wELlpDvHtoumXBqq92Lgd2nSRcRG0a4DbmyLRpMSH3BXeD4ty54XXqEMbCkHuuytF38almrJcZeHlP8cgwNXGTZXeLavohyQaSEh1B7TEz54dRZGzhPJZtYAFK4PoB5vc5JDphrxfvSFcW2hSRzpnyvlqRWwhzhZ7fcd766Gh9DEWg5OcwF3FcGMUe42nZF1yw047XIG22Jd1INGkQLrXcvIW54aU33tnhD9NqKOLtMKkt5UCl2CmLLGkDZs8pORclJ3BqG5hKneVXEUFbnERg2k1yzfokQoCsnOZVSZk0X7sUjP5RruAQiLb6j2y3XRi9uzWFZZQnfC3ZisnU24uqcESJ9ATwmRCmbjfWBbjvQbZ1GmaYOiFK2s7Z20eOVLynbcwRtelbgInPM0f6Bd17EoyU9WOzSvNJh6ruX82bA5FvGC0sjhJsR110KLyNnrwYeVxgGE2h6LZS6AuVvmZ22sBOAVLsDyC1Hx9AqVK0xjKEnBwZAEnjXaHBePhlGArtTYDqq5AI3n843Vc1ess4dN0ehvkiAGcytH6lpoRT3A2eYLTK0WLJEQ21} 	\end{align} 	provided $M_k$ is chosen sufficiently large. 	Then we have 	\begin{align} 	\begin{split} 	\PP(\tau^k<\delta) 	&\leq 	\sum_{j=0}^{k} 	\mathbb{P} 	\left( 	\sup_{0\leq t< \delta}\sifpoierjsodfgupoefasdfgjsdfgjsdfgjsdjflxncvzxnvasdjfaopsruosihjsdgghajsdflahgfsif\vj(t,\cdot)\sifpoierjsodfgupoefasdfgjsdfgjsdfgjsdjflxncvzxnvasdjfaopsruosihjsdgghajsdflahgfsif_6\geq M_j 	\right) 	+ 	\sum_{j=0}^{k} 	\mathbb{P} 	\left( 	\sup_{0\leq t< \delta}\sifpoierjsodfgupoefasdfgjsdfgjsdfgjsdjflxncvzxnvasdjfaopsruosihjsdgghajsdflahgfsif\vj(t,\cdot)\sifpoierjsodfgupoefasdfgjsdfgjsdfgjsdjflxncvzxnvasdjfaopsruosihjsdgghajsdflahgfsif_3\geq \frac{\bar{\epsilon}}{2^{j}} 	\right) 	\leq 	C 	\delta 	, 	\end{split} 	\llabel{Opp5mbSzoaMNxjCJvzeTs9fXWiIocB0eJ8ZLK7UuRshiZWs4gQ6t6tZSg2PblDsYoLBhVb2CfR4XpTOROqtoZGj6i5W89NZGbFuP2gH5BmGsIbWwrDvr9oomzXTFB1vDSZrPg95mCQf9OyHYQf47uQYJ5xoMCpH9HdH5W5LsA0SHBcWAWGo3BiXzdcJxBISVMLxHKySoLJcJshYQyybWSSGdwcdEZ6bu0qfmmj7WMsVRYqJD7WciZqArimTf6csurioA8Ow5xADChPG7slxY1gyegeB9OC39KFnGn7aB1yr4vo4hHwIC93Lq9KQvIgFnBoS04NZd5ay1rlErPpbvxpseYDufgnOiibGhYOCBwwv4pxm37IgUW9XY4zxXMVQDNmYwkHaG258v6MBSerLf0Fya6yCLCzGm3P8COVDg7Rpp3FaWdG0kIQYRIs0ieIu2tgMmiYoJOmYt3fBSHj29Sq8ab4N94WEhm8BC79jHL9KmhK9Zfr6AlpyfIhpG93BnOMt16adBKKu3oNcfIhWjMfkeGdnzssVHKcphPZlBgcewSd0u1PePYEZowysGuGBvc6ea5iWB3XgSzyHnOaDp8VRDVqmVy3099pj5gFkDpqzPQ96m5MyJSQ7bj2LyLRn1PsvRqAaN5QOIcJKVPgGovU3Q69LN4W4v8wbS0jsi6krY7uOj9j1aBSZ6uonf5hsIj6BQ8Nnb2fcKcWDzL7nMsD7NxvuY95USbCwaHEBadGIqHgoriu5esF7AbYAYtJodoqEQ109} 	\end{align} 	for all $k\in\mathbb{N}_0$, 	and \eqref{EQ79} follows. \end{proof} \par With the help of the previous two lemmas, we may now proceed to the proof of existence and uniqueness of solutions. \par \begin{proof}[Proof of Theorem~\ref{T01}] Without loss of generality, we may assume that $u_0\in L^\infty(\Omega, L^{3}(\TT^3))$, with the assumption~\eqref{EQ17}. Let $p_0$ be an arbitrarily small number in $(0,1)$. Applying the decomposition procedure in Lemma~\ref{L05}, we have the $L_x^3$-convergence of $\{\sum_{j=0}^{k}v_{0}^{(j)}\}_{k\in\NNz}$ \eqref{EQ27} to~$u_0$ almost surely. We assume that $0<\epsilon_0\ll \bar\epsilon\ll 1$ and $0<\MM_k\ll M_k$ are such that Lemma~\ref{L10} applies with $\PP(\tau<\infty)\leq p_0$, where $\tau$ is the limit of the stopping time sequence~\eqref{EQ85}; see~\eqref{EQ70}. By the definitions of the cutoff functions, for every $k\in \mathbb{N}_0$, the pair $(\vk, \tau)$ is a solution to~\eqref{EQ32}. Using    \begin{align}   \begin{split}    &    \sum_{j=0}^{k}     (     \vj\nabla\vj     + \ujm \nabla \vj     + \vj \nabla \ujm     )    =    \uk \nabla \uk   \end{split}    \llabel{q8VAyc8JApV0lJlYwyn6Sc6qTU7O1ek3r6y8zE0NvqFxOe0etCQfFQYU6wydAK5xsgtWyoHf4wkLJE7vsDtaSeyQYnecGAEE9Yiehg1bbi18Qic5rJyA6E1GNznKFeO8F23Ren7C5mlsRUt5moBQE0P0vMb5sNRJiRENqaZl1S0XQCLAWKOfnhsEcrp3BDnd8xRXBqgedxRV9ds7hxDYsXfuUOYUpPd6nkpnqU1BZylozBiHlOZqUwQHw6uyx1CevAKJpX8j3FDHKdkFGe1PLPwa3zuUlrnPllZsvR42FozOOJ2xv7SOMypK6I9dxqWRHMdk90PZn1CJiHjFiQOpyPnFWcnPNLzFCUTZbYKKM8QyLgFXtFqKzGFo8PnWkYxYg2TsGQZfmqqDqizHd93oVTk6QGDUljuHwS88DFPcV0PfY1PqwpsQSbMrVVNeZDe1F6mt7PgelfBt5ita8qXMizwKxCNoigAqPysitAbI7g4R8XE7tAKNexkg5a2urRig9qMqHSkERbIp9hrW5jzTcJlTgpMeLWFzzSVNgDtecg2YfSfNjVFP155DAWMbntnVm17177lYTObMWmVtwfPqJ0nF6baQxuSP13ZJubfmgRHsWKVnCRDcMkLRVXd3PEvZme7xKUmMURkbif3lAOuTemZqkO5fId0htO7xUpLztA9RO7gysAfqj751DjTsfImVIPQne9Fo9aRex4T4oypenSzEDID7KWchKsIrTolagXLh4f0vi4yl25MHPpbk9zYIfpEQ94}   \end{align} for all $k\in\mathbb{N}_0$, which can easily be proven by induction, we obtain that the pair $(\uk,\tau)$ solves the SNSE   \begin{align}    \begin{split}     &\partial_t u - \Delta u  + \mathcal{P}((u\cdot\nabla) u)     = \sigma(t,u) \dot{W}(t),     \\     &\nabla\cdot u = 0     \\     & u(0) = \uk_0 = v_{0}^{(0)} + \cdots + v_{0}^{(k)}     ,   \end{split}   \label{EQ95}   \end{align} (note that $u^{(-1)}=0$), which means that for any $T>0$, we have    \begin{align}    \begin{split}      (\uk_j( t \wedge \tau),\sifpoierjsodfgupoefasdfgjsdfgjsdfgjsdjflxncvzxnvasdjfaopsruosihjsdkghajsdflahgfsif)      &=      (u_{0,j}^{(k)},\sifpoierjsodfgupoefasdfgjsdfgjsdfgjsdjflxncvzxnvasdjfaopsruosihjsdkghajsdflahgfsif)    + \sifpoierjsodfgupoefasdfgjsdfgjsdfgjsdjflxncvzxnvasdjfaopsruosihjsdfghajsdflahgfsif_{0}^{t\wedge\tau}          (\uk_j (r), \Delta\sifpoierjsodfgupoefasdfgjsdfgjsdfgjsdjflxncvzxnvasdjfaopsruosihjsdkghajsdflahgfsif )      \,dr    \\&\indeq    - \sifpoierjsodfgupoefasdfgjsdfgjsdfgjsdjflxncvzxnvasdjfaopsruosihjsdfghajsdflahgfsif_{0}^{t\wedge\tau}       \bigl(\bigl(\mathcal{P}  (\uk_m(r) \uk(r))\bigr)_j ,\partial_{m} \sifpoierjsodfgupoefasdfgjsdfgjsdfgjsdjflxncvzxnvasdjfaopsruosihjsdkghajsdflahgfsif\bigr)      \,dr    \\&\indeq      +\sifpoierjsodfgupoefasdfgjsdfgjsdfgjsdjflxncvzxnvasdjfaopsruosihjsdfghajsdflahgfsif_0^{t\wedge\tau} \bigl(\sigma_j(r, \uk(r)),\sifpoierjsodfgupoefasdfgjsdfgjsdfgjsdjflxncvzxnvasdjfaopsruosihjsdkghajsdflahgfsif\bigr)\,dW(r)\quad      \PP\mbox{-a.s.}     \comma j=1,2,3     ,    \end{split}    \llabel{FVV19CRh1tkqoQ1bQGtzGL0f4xN6GCHP7RXOnRbpX2WlP56rNQlEyARF84JeprN20CZxiyjplnNJiE3CfUxUjGxVfClOs8VwvDylP654xADcvWYtPPlqyDzXN83tJgydFQhHueuqU6ZyVijYQjtdJf4J93VVpNwzagiEmsHTInsFJaiogOY7GnLhHqUyLxrAq8itXPsMtyXO32mVYU2GMV729mGz8vXx4rNFR84VgcS6Bmn15nKdREfkhXGuf2c9CrXBqsEVOhPCASyf9Js77D9kqMznbcTe2Uu2Jnq8DBs8rfU7rd0BqQ4zzXOxTg7CrZuthjYW4aLtrFcCjwGZs7MWMmSpHFHfLiB7kX34MWAu5LIj3gvVGAkt1HUEE8UBYWVMJzMsv5P5JZvQnVHI1ZMg6AfsQjxl2VB9XKgEZrlgD5GqiFivOJgx0uYgE71IUqgjKnLSelC6w6tFI8fKMiHsqHDSeNXK9vtSmVcR8qXEEOqdh6WiObbuBi1dL1TGyUlerX0GfVwJXZcO4PCie5n0gNVUs6zGEiK4Qu0eVOBbGIv31f7MmZWGUui0XsJeIKrc6e1Hdz1A1uAiEJ1cBUWYPsoTyxIE5BV9Eyujob6iZVLOiNN6OdNQmB1lIjdwiBRH8d2G2I62dVz4IwScfAfPtOMSH1H1XALjFIXixvFrN9eesLSkD92y7poOY5zfXT21phWyN6C3iGT3jHRxnLx4ShHFPSeag0pIBSVsPSRRgqoHFPB9xiM4Lzsaaon5qdEQ104}   \end{align} for all $\sifpoierjsodfgupoefasdfgjsdfgjsdfgjsdjflxncvzxnvasdjfaopsruosihjsdkghajsdflahgfsif\in C^{\infty}(\TT^3)$ and $t\in [0, T]$. Due to the definition~\eqref{EQ33} and the inequality \eqref{EQ71}, the sequence $\uk$ converges uniformly to some~$u$ in the space~$\LLLL{\infty}{\infty}{3}$.  Using the uniform convergence of $\uk$ to $u$, we obtain the trajectories of $u$ and   \begin{align} \begin{split} (\uu_j( t \wedge \tau),\sifpoierjsodfgupoefasdfgjsdfgjsdfgjsdjflxncvzxnvasdjfaopsruosihjsdkghajsdflahgfsif) &= (\uu_{j}(0),\sifpoierjsodfgupoefasdfgjsdfgjsdfgjsdjflxncvzxnvasdjfaopsruosihjsdkghajsdflahgfsif) + \sifpoierjsodfgupoefasdfgjsdfgjsdfgjsdjflxncvzxnvasdjfaopsruosihjsdfghajsdflahgfsif_{0}^{t\wedge\tau} (\uu_j (r), \Delta\sifpoierjsodfgupoefasdfgjsdfgjsdfgjsdjflxncvzxnvasdjfaopsruosihjsdkghajsdflahgfsif ) \,dr \\&\indeq - \sifpoierjsodfgupoefasdfgjsdfgjsdfgjsdjflxncvzxnvasdjfaopsruosihjsdfghajsdflahgfsif_{0}^{t\wedge\tau} \bigl(\bigl(\mathcal{P}  (\uu_m(r) \uu(r))\bigr)_j ,\partial_{m} \sifpoierjsodfgupoefasdfgjsdfgjsdfgjsdjflxncvzxnvasdjfaopsruosihjsdkghajsdflahgfsif\bigr) \,dr +\sifpoierjsodfgupoefasdfgjsdfgjsdfgjsdjflxncvzxnvasdjfaopsruosihjsdfghajsdflahgfsif_0^{t\wedge\tau} \bigl(\sigma_j(r, \uu(r)),\sifpoierjsodfgupoefasdfgjsdfgjsdfgjsdjflxncvzxnvasdjfaopsruosihjsdkghajsdflahgfsif\bigr)\,dW(r)\quad \PP\mbox{-a.s.} \comma j=1,2,3 , \end{split}    \llabel{O9eSIckw1TWpA0dPRxm0mJS4CO20oacZNRhyXd9vF8myOtVO1yy8IPXRi69P7nEiVT9tqwzd7Si8yQLJx6HizMwkKqPKVouNGwhrsqKiGALuz34Qk0yTDFsJvXIYjMYHS9XWlgEwN6gCwyVdnIxK4B05Xchlwsf7kVsFFU6TUiBVUraV7WrTLbySahdqdcZgGtKtv0Uqp86jB630pHaqGwDnGpg0ZprB4fFDmtx2J23Ed37CNW5LkvQFWEouXhTIqdUmrBx7BGPMED17hd24zjCEavON1DJoxfmZadkTVq0VaQaDqE0vOGEazPBEKbULHqxEUgdPZyHW7xH2QWZqjxAaP0GszJTMSWwxJ2sZwiYA2Hu5wNNpJNQr1x3edvP8QVEszZCf2893tzSk3sLWLvXG2Ro4lH9AvQb5Ty4O1FTOAYScPcPIhJBPFPuHDAtdSw9x8vu9hrWf4XOypGsNQvxf9kolHSNEkMEGaZMPg1AwXYaRSUot7Fmp3SE4ZMRFbqq9KCjfC9LHi1X5nEhUw8q2Y6HLATRc4TNlNWkstGQ2RiLjSdaizOb6P848n3HLWHyAptpfQyXEcd7x0zqVNmDKzl5NtEVG2EvbbivLHBtnas3T4ddSM0QJyx3fU3CuYD2A5yhtcshT2ktVFKXXUDHoEs9pq3V1LmrePCwx4R8R51mNsW3ViX6mzJoctVZW1os7k61bwJoz5nOEsiih2VHu4D0bfy5RNqGQbIH8lXV9pKsHGGjm4xCypKPPTAUwxsEQ74} \end{align}  for all $\sifpoierjsodfgupoefasdfgjsdfgjsdfgjsdjflxncvzxnvasdjfaopsruosihjsdkghajsdflahgfsif\in C^{\infty}(\TT^3)$ and all $t\in [0, T]$ for every~$T>0$. Namely, the pair $(u,\tau)$ solves the original SNSE~\eqref{EQ01}.  \par Now, we proceed to show \eqref{EQ15} for~$\uk$. Similarly to the proof of Lemma~\ref{L07}, we apply Lemma~\ref{L02} to the equations \eqref{EQ95}, obtaining   \begin{align}   \begin{split}   &\sadklfjsdfgsdfgsdfgsdfgdsfgsdfgadfasdf\biggl[\sup_{0\leq t\leq T\wedge\tau}\sifpoierjsodfgupoefasdfgjsdfgjsdfgjsdjflxncvzxnvasdjfaopsruosihjsdgghajsdflahgfsif\uk_j(t,\cdot)\sifpoierjsodfgupoefasdfgjsdfgjsdfgjsdjflxncvzxnvasdjfaopsruosihjsdgghajsdflahgfsif_3^3   +\sifpoierjsodfgupoefasdfgjsdfgjsdfgjsdjflxncvzxnvasdjfaopsruosihjsdfghajsdflahgfsif_0^{T\wedge\tau}  \sifpoierjsodfgupoefasdfgjsdfgjsdfgjsdjflxncvzxnvasdjfaopsruosihjsdfghajsdflahgfsif_{\TT^3} | \nabla (|\uk_j(t,x)|^{3/2})|^2 \,dx dt\biggr]   \\&\indeq   \leq C   \sadklfjsdfgsdfgsdfgsdfgdsfgsdfgadfasdf\biggl[   \sifpoierjsodfgupoefasdfgjsdfgjsdfgjsdjflxncvzxnvasdjfaopsruosihjsdgghajsdflahgfsif\uk_{0,j}\sifpoierjsodfgupoefasdfgjsdfgjsdfgjsdjflxncvzxnvasdjfaopsruosihjsdgghajsdflahgfsif_3^3   +   (\bar\epsilon+\epsilon_{\sigma})\sifpoierjsodfgupoefasdfgjsdfgjsdfgjsdjflxncvzxnvasdjfaopsruosihjsdfghajsdflahgfsif_0^{T\wedge\tau}    \sifpoierjsodfgupoefasdfgjsdfgjsdfgjsdjflxncvzxnvasdjfaopsruosihjsdgghajsdflahgfsif \uk\sifpoierjsodfgupoefasdfgjsdfgjsdfgjsdjflxncvzxnvasdjfaopsruosihjsdgghajsdflahgfsif_{9}^{3}   \,dxdt   \biggr]   \commaone j=1,2,3,   \end{split}   \llabel{x7SLjPjQdzHCVuarB6XUve32Z5b1lfFSRyfI0SaOdaYTBXN9qmijDO3az1zo9yKuqnB2UShf0GnWhQEht1eHPAd4a9l5p3Yi81r25uZwwR8G3OIzhTOGYsITG5YwtjFM4r2gLKn87OPhoVn11UEIn32O8sGpRnbNnRIZe0JQ29vldLUrB00Fd3eabLWFPv66hXwjkEcRTo4b0deLWv7rsDc7jiveH75j8F32Vz9lkcNphuH21hOE9LV28bc6f8a6szNZXRpJxDwCkqBrr6ewILJZFhHe8QzMVDJtJewQZJUI551nJ9PeovNSZZalieWJsew3kjdkmKn3Q6GKkfsnKeFlMnycL7fD9IaZ2zgk6dlqd01Z1rwEcbdMRH5I49KqEFRtg0tvkExZCRtMCkHE5snrnPg6Ijvsob8KThiOslyOFYa4mbysgIga7pzCdiXnubntkexBbWpwpMo4k8yxesQnXcG8mEFNWaegqbClcZ5bzZyNjEVtxND8cqbDf7Y7CZmIa140Y9sQMljx4EzBM6U0PHNOBsii1AKPov8wBSPX2VPf8xV3D3TDIWXly4kOf59po0Skn07HCFxP9bjyZBmyHMTfaFdJGqzVFPGNy6poIBElRsWC485AFty6t60NFgbMhZajDei2xvrczUQK9iMPhtiC1JBiBOtS95oFWT7jV0Q25YSeG3jID3SXk9xJIJ1QgYMR2hjzlnmTNtfqC5QI5IwpeqxKG1t8wq4sthwH6nBDBUclkzZWz0B2CqsO4fEQ100}   \end{align} which implies   \begin{align} \begin{split} \sadklfjsdfgsdfgsdfgsdfgdsfgsdfgadfasdf\biggl[ \sup_{0\leq s\leq \tau}\sifpoierjsodfgupoefasdfgjsdfgjsdfgjsdjflxncvzxnvasdjfaopsruosihjsdgghajsdflahgfsif\uk(s,\cdot)\sifpoierjsodfgupoefasdfgjsdfgjsdfgjsdjflxncvzxnvasdjfaopsruosihjsdgghajsdflahgfsif_3^3 +\sifpoierjsodfgupoefasdfgjsdfgjsdfgjsdjflxncvzxnvasdjfaopsruosihjsdfghajsdflahgfsif_0^{\tau}  \sum_{j}    \sifpoierjsodfgupoefasdfgjsdfgjsdfgjsdjflxncvzxnvasdjfaopsruosihjsdfghajsdflahgfsif_{\TT^3} | \nabla (|\uk_j(s,x)|^{3/2})|^2 \,dx ds \biggr] \leq  C\epsilon_0^{3}, \end{split} \label{EQ101} \end{align} provided $\bar\epsilon$ and $\epsilon_{\sigma}$ are sufficiently small. Utilizing the convergence of $\uk$ and Lemma~\ref{L03}, we may pass the limit in \eqref{EQ101} and arrive at~\eqref{EQ15}. \par To prove the uniqueness, we assume that $(u,\eta)$ and $(\tilde u,\tilde\eta)$ are two solutions of the SNSE satisfying \eqref{EQ15}, as in the main statement. The difference $(v,\bar\eta)$, where $v=u-\tilde u$ and $\bar\eta=\eta\wedge\tilde\eta$, solves   \begin{align}    \begin{split}     &\partial_t v - \Delta v       = - \mathcal{P}((u\cdot\nabla) v)         - \mathcal{P}((v\cdot\nabla) \tilde u)         + (\sigma(t,u) -\sigma(t,\tilde u))\dot{W}(t),     \\     &\nabla\cdot v = 0      \\     & v(0)=0.   \end{split}    \llabel{DgXc3OuMGJonsZM2EKIUOKr73Yv7JL9CdyguQqtIUrrhkHnHJ5JigfRrnKyNp0T37DBJiFzVqyPnQSY5QBv3LYZ5LrCdTqazeczlmGVAcx7uExXYu8ik7BIX3DEpaIyldWvAtDxvl70fkOcQ7RreyXDJE1gj3cLSNQTmK6lqOE4SVGxgDJO9muKDvvMaHGeevgMM7lWB15Wq9lFoUxFcPSyjHSWX5kgPup5yvw28JWVZ9At0WjfJuLfhFQd6aE7VnPq7ow2C6hK2loMkgJn03Vz5EteIMoiHWS2Htw4f99HNj7318gaIvJxjakNjunHNNM8kpBy8bp9KEQclHhTNFgu2uEkb6lPKahMNekvnAo0mMoeLXY0spWIDgTgbamnkCTSUvKLj6dtN3xYQr7kycTga6P3EVDPR1kftVZdHMlaCiTwUNJAw1pfZrEtytozvY85OB0FtLYpfKqFkodCG2d5ezkg1rqSaY8EJdLoUYFDMLsaWIITSAphuqtx20vtbdYDk8ReLv274NHf3aKJU0coov2EcMcwomvuDxZkoHa4L8uhvoKCiMy7UzuiN29wfPfG8TFtjOdqlG4M0luRt0lxZM3YA5uYoGkNkBzhbHo4aOEE5rxCUp1ArJjBN1h5cWs1F6qQVapGNgBgx7M5AswfEMyMgPsoVexGPimA4yXPma5dTjJ2BN3F3SMYKxqSegaB769y5PVO8QyJTRBe2vVe0pkNSyOnhkXc9d0clSXlbsqp8tCvKLTo5alzigOORUlEQ102}   \end{align} Proceeding as in the proof of Lemma~\ref{L07}, we obtain an analog of \eqref{EQ61}, which reads   \begin{align}   \begin{split}   &\sadklfjsdfgsdfgsdfgsdfgdsfgsdfgadfasdf\biggl[\sup_{0\leq t\leq \bar\eta}\sifpoierjsodfgupoefasdfgjsdfgjsdfgjsdjflxncvzxnvasdjfaopsruosihjsdgghajsdflahgfsif v(t,\cdot)\sifpoierjsodfgupoefasdfgjsdfgjsdfgjsdjflxncvzxnvasdjfaopsruosihjsdgghajsdflahgfsif_3^3   +\sifpoierjsodfgupoefasdfgjsdfgjsdfgjsdjflxncvzxnvasdjfaopsruosihjsdfghajsdflahgfsif_0^{\bar\eta}\sum_{j=1}^{3} \sifpoierjsodfgupoefasdfgjsdfgjsdfgjsdjflxncvzxnvasdjfaopsruosihjsdfghajsdflahgfsif_{\TT^3} | \nabla (|v_j(t,x)|^{3/2})|^2 \,dx dt\biggr]   \leq C  \sadklfjsdfgsdfgsdfgsdfgdsfgsdfgadfasdf\bigl[  \sifpoierjsodfgupoefasdfgjsdfgjsdfgjsdjflxncvzxnvasdjfaopsruosihjsdgghajsdflahgfsif v(0)\sifpoierjsodfgupoefasdfgjsdfgjsdfgjsdjflxncvzxnvasdjfaopsruosihjsdgghajsdflahgfsif_3^3  \bigr]
  = 0   ,  \end{split}    \llabel{P3qMGvTJARYozYDJLvTUuqMOONYNP82Ps9OiDs17x75SajMoJGBdp885oixszl6oKVEaWAZWwF96XfzlOd9zCCgma4P67OY59PgbGtzVZ4KWPNxAjy0Fp94LWBmVqjqW3vXH4rCz4IuAL6FdNeNtlHCh4NRn6UxnBYSpywkHC2I0Gictzw30lLmQmNTZ8HjuybH1W0zwXr9JEACx0pSDpGuO60FM2QmewNRFhP9fiIm1FeGcDRw2ovhfk2fSx8NUAIwPv4CmFCE8lJz5IeHrsPwiuDaT69r6Mj4keBfBhvgBXOKJdTbjUFNA59p2fDj1WpyNbiYAL9OywosNCiw4tSyNk8avAm4NgApm0XgUy3yAhP3TWptUe3EjdLgM1O9lt32XbpFzK4Avi5WwFdPyHAYdZrfFhrdkYrdfnJAepd29E66Cjsi1JRZ6lJUOAgxAp9MosewoBEV4n6aH75qqkvnXybkmeyYj4TaXPWupqUzKbkpqyo78o4zwXErs8jZMpW6ij9AP4qU6Bofbe06qWG1LY6kbllcrPmrXjpULG6mkqqGOqtbvMOO3tXZlHfb2xBrXkKXCfbTjxRvDbKlDrZ1aDF352Sg5Aav3lzOXMjar4a7qGoYXu3m60XPlQBE1PUxOiviBcglptUEyz5EDs7eDQpxJvDhG8DJzvScH9gTnXUuUftbxSmTtdb33oeQ6aiHUncNe4HecijPLdrnbGzeArrypC5pWBqjIsjCp4x25HWZwo8j2MNbdX2CfvqMUtlEQ93}  \end{align} and the proof of uniqueness is complete.  \end{proof} \par \begin{Remark}\label{R03} If we do not assume the smallness of $\epsilon_\sigma$ in~\eqref{EQ30}, we can still verify that the limit of $\uk$ solves the original SNSE~\eqref{EQ01} up to $\tau\wedge T$ for every~$T>0$. We would obtain  \begin{align} \begin{split} &\sadklfjsdfgsdfgsdfgsdfgdsfgsdfgadfasdf\biggl[\sup_{0\leq t\leq T\wedge\tau}\sifpoierjsodfgupoefasdfgjsdfgjsdfgjsdjflxncvzxnvasdjfaopsruosihjsdgghajsdflahgfsif\uk_j(t,\cdot)\sifpoierjsodfgupoefasdfgjsdfgjsdfgjsdjflxncvzxnvasdjfaopsruosihjsdgghajsdflahgfsif_3^3 +\sifpoierjsodfgupoefasdfgjsdfgjsdfgjsdjflxncvzxnvasdjfaopsruosihjsdfghajsdflahgfsif_0^{\tau\wedge T}  \sifpoierjsodfgupoefasdfgjsdfgjsdfgjsdjflxncvzxnvasdjfaopsruosihjsdfghajsdflahgfsif_{\TT^3} | \nabla (|\uk_j(t,x)|^{3/2})|^2 \,dx dt\biggr] \\&\indeq \leq C \sadklfjsdfgsdfgsdfgsdfgdsfgsdfgadfasdf\biggl[ \sifpoierjsodfgupoefasdfgjsdfgjsdfgjsdjflxncvzxnvasdjfaopsruosihjsdgghajsdflahgfsif\uk_{0,j}\sifpoierjsodfgupoefasdfgjsdfgjsdfgjsdjflxncvzxnvasdjfaopsruosihjsdgghajsdflahgfsif_3^3 + \bar\epsilon\sifpoierjsodfgupoefasdfgjsdfgjsdfgjsdjflxncvzxnvasdjfaopsruosihjsdfghajsdflahgfsif_0^{\tau\wedge T}  \sifpoierjsodfgupoefasdfgjsdfgjsdfgjsdjflxncvzxnvasdjfaopsruosihjsdgghajsdflahgfsif \uk\sifpoierjsodfgupoefasdfgjsdfgjsdfgjsdjflxncvzxnvasdjfaopsruosihjsdgghajsdflahgfsif_{9}^{3} \,dt +\epsilon_{\sigma}^2 \sifpoierjsodfgupoefasdfgjsdfgjsdfgjsdjflxncvzxnvasdjfaopsruosihjsdfghajsdflahgfsif_0^{T} \sifpoierjsodfgupoefasdfgjsdfgjsdfgjsdjflxncvzxnvasdjfaopsruosihjsdgghajsdflahgfsif \uk\sifpoierjsodfgupoefasdfgjsdfgjsdfgjsdjflxncvzxnvasdjfaopsruosihjsdgghajsdflahgfsif_{3}^{3} \,dt \biggr] \commaone j=1,2,3, \end{split}    \llabel{0eT39LzGoZGUXcC1cxRKQmrXFI2A5XLOpxHgNP1vXnZ35YTT3VGgG0Y6loTkgX0LjtDCaBSFgBKQvpoDC9HobyECQkmxBrJyBuZ3W39tu4Dea8xsYqE4OxT15FIkmYw0KTTYcNjy8VvmxWdyr6uNTV13OrOhZwAeZavnDhcEmBSweWlJps7N31mM2jXy92SdSas7K9c3MBcWyUSHdIXC9V5oGtzaEY8nMBJ5DNZHwBDOyWamRPVYnoV2qlhDcqpB8tHmM0e4ja5ptjDRAUaAgTEbYwBr4rapjLtCHQORU6i4GrYkFzx91EBrqRGhmlSu3RKzh489F6L6b4bxcdxYhlbk26D4DPDjmcj3Gm6MqaXGeNphBOSVYr0mOQs30RQc5wEge7aeVYf4gvJBA2eWoh292cHSKXVNgRN8T0oSND87zYU63epzrMIO2auIWoZe6SOIILTpn4dMOk7XtfvenMlUF0r5GvYIauSAak20YFPB9jOxRu8zz3Dq32eq6iUeKOXgJ0zCW18zX2n0RhKCtqtZJJx2v2Polu2Sxjglz9aeKoC1db0OnKaouVpCm4lvK41H5Gev2sruVbkFYDjm7MwfiT1blqc93lFjHNwUtfPPNvyGdZOkimHRHTNNNzkw71lIGsYTSWR3ulXvokSf1KE3N9QLhfH7OEAELTre0BXLhex4qypTkhHsZBVnrgnHatAiEJv6Mz3X5fP5iiBI7m9q5VzUODo9YgA9uFrhGKOYZH4NNngGYwPYwLPrtmj63dEQ66} \end{align}	 for every $\uk$, which leads to   \begin{align} \begin{split} \sadklfjsdfgsdfgsdfgsdfgdsfgsdfgadfasdf\biggl[ \sup_{0\leq s\leq \tau\wedge T}\sifpoierjsodfgupoefasdfgjsdfgjsdfgjsdjflxncvzxnvasdjfaopsruosihjsdgghajsdflahgfsif\uk(s,\cdot)\sifpoierjsodfgupoefasdfgjsdfgjsdfgjsdjflxncvzxnvasdjfaopsruosihjsdgghajsdflahgfsif_3^3 +\sifpoierjsodfgupoefasdfgjsdfgjsdfgjsdjflxncvzxnvasdjfaopsruosihjsdfghajsdflahgfsif_0^{\tau\wedge T}  \sum_{j}    \sifpoierjsodfgupoefasdfgjsdfgjsdfgjsdjflxncvzxnvasdjfaopsruosihjsdfghajsdflahgfsif_{\TT^3} | \nabla (|\uk_j(s,x)|^{3/2})|^2 \,dx ds \biggr] \leq  C\epsilon_0^{3}, \end{split}    \llabel{kYx01d8zwbroon3LtCqpSeeQHTdVyRdz2GIxyjxKPpLa43Qqkz9bJVgNuivcKNtiU4ujWunorFDIcKzhicQZNU6Pz2uuRdQuKOmeh3mgffnVsf6MJMjBMsQhPH3o32OSx2MDEZMH5HnpWJibL5OFkDOUwiRxT3v06s2BYw4IJjX1zNhb7vTWSRff0ksZRBBODc5bCv1kW9ImjwmTRJwtKdlnqPn05tygZWviIEgd8s1MtAoe0mEjEIz3EafdFHNRWD785FlERfwQnJKGnjmzfVU24NgtQrxrOBtTKwBpqF7DJJJAJCF81h2eRhZvuStrU9VrPvjDzscPPw3rRiPiJgkT7MfVPTQHq12JwvTwx4cNeEegDTNyQWJX10wYClRDLGm1xe7jcIASC2Jgx3cfRzO9Ig6MxcdNRwZL0nyw5ufePnsNP09sz5ZIWrMQ3uh0NiF92ae0vXnX05RreQzCA5jvDdHXnSHmkTwKQaSFKKKQC5JeHMTGvF4peEMRxWeVYXcqNHhqFBvBuEXPxCf80MNsv3RRJVhcMyUPRU89oE0z7TCgQFf4WPet8YMqfaL8BmyuoDrCSc5yZ5DJj9icvgYu0SwkO9exAq2B2SIi96GD0UY9M6YcTlOvAjeT1AVE9R01xsVSOKOLxXtRzLb4LO1iXmyBmOTsiN6yYd9By2JhhZs7aek4dsjUZgANw1V9Q3EQHil8oz1jQVr4AbJQe7CPnRsumj5y1JvKb3ZeVPOx39tp3IaPyxezp10fAv5r50EQ67} \end{align} with the constant $C$ depending on $T$. Utilizing the convergence of $\uk$, we may pass to the limit and arrive at~\eqref{EQ15-2}. The pathwise uniqueness is first established on $[0, \tau\wedge T]$ for a sufficiently small $T>0$, and then on $[0, \tau\wedge T]$ for any $T>0$ extending by a finitely many steps. \end{Remark} \par \section*{Acknowledgments} \rm IK was supported in part by the NSF grant DMS-2205493.  \ifnum\sketches=1 \fi \end{document}